\providecommand{\keywords}[1]{\textbf{\textit{Keywords.}} #1}
\providecommand{\AMSclass}[1]{\textbf{\textit{AMS classification.}} #1}
\title{Contractions and extractions on twisted bialgebras and coloured Fock functors}
\date{}
\author{Lo\"ic Foissy}
\affil{\small{Univ. Littoral Côte d'Opale, UR 2597
LMPA, Laboratoire de Mathématiques Pures et Appliquées Joseph Liouville
F-62100 Calais, France}.\\ Email: \texttt{foissy@univ-littoral.fr}}
\newcommand{\tdun}[1]{\begin{picture}(10,5)(-2,-1)
\put(0,0){\circle*{2}}
\put(3,-2){\tiny #1}
\end{picture}}
\newcommand{\tddeux}[2]{\begin{picture}(12,5)(0,-1)
\put(3,0){\circle*{2}}
\put(3,5){\circle*{2}}
\put(3,0){\line(0,1){5}}
\put(6,-3){\tiny #1}
\put(6,5){\tiny #2}
\end{picture}}
\newcommand{\gdtroisun}[3]{\begin{picture}(22,12)(-8,-1)
\put(3,0){\circle*{2}}
\put(6.5,7){\circle*{2}}
\put(-1,7){\circle*{2}}
\put(-1,7){\line(1,0){7.5}}
\put(-2.8,0){\Large $\vee$}
\put(5,-2){\tiny #1}
\put(9,5){\tiny #2}
\put(-8,5){\tiny #3}
\end{picture}}
\newcommand{\tdtroisdeux}[3]{\begin{picture}(12,15)(-2,-1)
\put(0,0){\circle*{2}}
\put(0,5){\circle*{2}}
\put(0,10){\circle*{2}}
\put(0,0){\line(0,1){5}}
\put(0,5){\line(0,1){5}}
\put(3,-4){\tiny #1}
\put(3,4){\tiny #2}
\put(3,12){\tiny #3}
\end{picture}}
\theoremstyle{plain}
\newtheorem{theo}{Theorem}[section]
\newtheorem{lemma}[theo]{Lemma}
\newtheorem{cor}[theo]{Corollary}
\newtheorem{prop}[theo]{Proposition}
\newtheorem{defi}[theo]{Definition}
\theoremstyle{remark}
\newtheorem{remark}{Remark}[section]
\newtheorem{notation}{Notations}[section]
\newtheorem{example}{Example}[section]
\newcommand{\K}{\mathbb{K}}
\newcommand{\N}{\mathbb{N}}
\newcommand{\calG}{\mathscr{G}}
\newcommand{\bfG}{\mathbf{G}}
\newcommand{\id}{\mathrm{Id}}
\newcommand{\com}{\mathbf{Com}}
\newcommand{\bfP}{\mathbf{P}}
\newcommand{\bfQ}{\mathbf{Q}}
\newcommand{\bfR}{\mathbf{R}}
\newcommand{\eq}{\mathcal{E}}
\newcommand{\cl}{\mathrm{cl}}
\newcommand{\bfI}{\mathbf{1}}
\newcommand{\sym}{\mathfrak{S}}
\newcommand{\vect}{\mathrm{Vect}}
\newcommand{\comp}{\mathbf{Comp}}
\newcommand{\calcomp}{\mathscr{C}\hspace{-1mm}\mathit{omp}}
\newcommand{\QSh}{\mathrm{QSh}}
\newcommand{\QSym}{\mathbf{QSym}}
\newcommand{\bfT}{\mathbf{T}}
\newcommand{\calF}{\mathcal{F}}
\newcommand{\coinv}{\mathrm{coInv}}
\begin{document}

\maketitle

\begin{abstract}
We introduce a notion of extraction-contraction coproduct on twisted bialgebras, that is to say bialgebras in the category
of linear species. If $\bfP$ is a twisted bialgebra, a contraction-extraction coproduct sends $\bfP[X]$ to 
$\bfP[X/\sim]\otimes \bfP[X]$ for any finite set $X$ and any equivalence relation $\sim$ on $X$,
with a coassociativity constraint and compatibilities with the product and coproduct of $\bfP$. 
We prove that if $\bfP$ is a twisted bialgebra with an extraction-contraction coproduct, then $\bfP\circ \com$
is a bialgebra in the category of coalgebraic species, that is to say species in the category of coalgebras.

We then introduce a coloured version of the bosonic Fock functor. This induces a bifunctor which associates to any
bialgebra $(V,\cdot,\delta_V)$ and to any twisted bialgebra $\bfP$ with an extraction-contraction coproduct
a comodule-bialgebra $\calF_V[\bfP]$: this object inherits a product $m$ and two coproducts $\Delta$ and $\delta$,
such that $(\calF_V[\bfP],m,\Delta)$ is a bialgebra in the category of right $(\calF_V[\bfP],m,\delta)$-comodules.

As an example, this is applied to the twisted bialgebra of graphs. 
The coloured Fock functors then allow to extend the construction of the double
bialgebra of graphs to double bialgebras of graphs which vertices are decorated by elements of any bialgebra $V$. 
Other examples (on mixed graphs, hypergraphs, noncrossing partitions$\ldots$) will be given in a series of forthcoming papers.
\end{abstract}

\keywords{Bialgebras in cointeraction; Twisted bialgebras; Fock functors}\\

\AMSclass{16T05 16T30 18M80 05C25}

\tableofcontents

\section*{Introduction}

Bialgebras in cointeraction, or cointeracting bialgebras, or comodule-bialgebras, or double bialgebras,
 are bialgebras $(A,m,\Delta)$ with a second
coassociative and counitary coproduct $\delta$, such that $(A,m,\Delta)$ is a bialgebra in the category of 
right $(A,m,\delta)$-comodules, where the coaction on $A$ is $\delta$ itself. In particular, 
we obtain the following compatibility between the two coproducts $\Delta$ and $\delta$:
\[(\Delta\otimes \id)\circ \delta=m_{1,3,24}\circ (\delta \otimes \delta)\circ \Delta,\]
meaning that $\Delta:A\longrightarrow A\otimes A$ is a comodule morphism. The map $m_{1,3,24}:A^{\otimes 4}\longrightarrow
A^{\otimes 3}$ sends $a\otimes b\otimes c\otimes d$ onto $a\otimes c\otimes bd$. Examples are given 
by quasi-shuffle algebras \cite{Hoffman2000,Hoffman2020,Ebrahimi-Fard2017-2} (which include the polynomial algebra $\K[X]$ 
and the algebra of quasisymmetric functions $\QSym$ \cite{Aguiar2006-2,Gelfand1995,Hazewinkel2005,Malvenuto2011,Stanley1999}),
some combinatorial double bialgebras based on rooted trees \cite{Calaque2011}, various families of graphs
\cite{Manchon2012,Foissy36}, posets or finite topologies \cite{Foissy37}, noncrossing partitions \cite{Foissy38}$\ldots$
These objects play an important role in Bruned, Hairer and Zambotti's process of renormalisation of stochastic PDEs 
\cite{Bruned2015,Bruned2019}.

Twisted double bialgebras, that is to say double bialgebras in the category of species, have been studied
in \cite{Foissy39}. A (linear) species is a functor from the category of finite sets with bijections
to the category of vector spaces, and a species morphism between two species $\bfP$ and $\bfQ$ is a natural transformation
between these two functors (see Section \ref{sect1} for more details). This category of species is symmetric monoidal
with the Cauchy tensor product, allowing to define and consider algebras, coalgebras and bialgebras in the category
of species. Replacing the category of vector spaces by the category of coalgebras, we can consider coalgebraic species, 
which also form a symmetric monoidal category with the Cauchy tensor product, 
and consider algebras, coalgebras, and bialgebras in this category.
If $(\bfP,m)$ is an algebra in the category of coalgebraic species, then for any disjoint finite sets $X_1,\ldots,X_k$,
$\bfP[X_1]\otimes \ldots \otimes \bfP[X_k]$ is a $\bfP[X_1\sqcup \ldots \sqcup X_k]$-comodule, with the coaction
\[\rho_{X_1,\ldots,X_k}=m_{1,3,\ldots,2k-1,24\ldots 2k}\circ (\delta_{X_1}\otimes \ldots \otimes \delta_{X_k}),\]
where $\delta_{X_i}$ is the coproduct of the coalgebra $\bfP[X_i]$ and $m_{1,3,\ldots,2k-1,24\ldots 2k}$ is defined by
\[m_{1,3,\ldots,2k-1,24\ldots 2k}(a_1\otimes b_1\otimes \ldots \otimes a_k\otimes b_k)
=a_1\otimes \ldots \otimes a_k\otimes b_1\dots b_k.\]
In this context, a twisted double bialgebra is a bialgebra $(\bfP,m,\Delta)$ in the category
of coalgebraic species, such that for any finite sets $X$ and $Y$, $m_{X,Y}:\bfP[X]\otimes \bfP[Y]\longrightarrow \bfP[X\sqcup Y]$
and  $\Delta_{X,Y}:\bfP[X\sqcup Y]\longrightarrow\bfP[X]\otimes \bfP[Y]$ are comodule morphisms. 
An alternative way to describe these objects is to consider a second tensor products on species, namely the Hadamard tensor product.
If $(\bfP,m,\Delta,\delta)$ is a twisted double bialgebra, Aguiar and Mahajan's bosonic Fock functor $\calF$ \cite{Aguiar2010}
sends it to a double bialgebra, in the classical sense.\\

We introduce in this article a way to obtain twisted double bialgebras from extraction-contraction coproducts 
(Definition \ref{deficontraction}). These coproducts are defined on a twisted bialgebra $(\bfP,m,\Delta)$:
for any finite set $X$, for any equivalence relation $\sim$ on $X$, $\bfP[X]$ has a coproduct
\[\delta_\sim:\bfP[X]\longrightarrow \bfP[X/\sim]\otimes \bf[P],\]
satisfying some coassociativity constraints and compatibilities with the product $m$ and the coproduct $\Delta$. 
If $(\bfP,m,\Delta,\delta)$ is such an object, we prove that $\bfP\circ \com$ is a double twisted bialgebra
(Proposition \ref{propbialgebra}): here, $\circ$ is the composition of species and $\com$ is the species defined
by $\com[X]=\K$ for any finite set $X$. In other words, for any finite set $X$,
\[\bfP\circ \com[X]=\bigoplus_{\sim \in \eq[X]} \bfP[X/\sim],\]
where $\eq[X]$ is the set of equivalence relations on $X$. 

Consequently, if $(\bfP,m,\Delta,\delta)$ is a twisted bialgebra with an extraction-contraction coproduct,
$\calF[\bfP\circ \com]$ is a double bialgebra. We extend this result to a larger families of bosonic Fock functors.
For any vector space $V$, we introduce a species $\bfT_V$ of tensors, such that for any $n$,
\[\bfT_V[\{1,\ldots,n\}]=V^{\otimes n}.\]
We prove that if $(V,\delta_V)$ is a coalgebra, then $\bfT_V$ is a double twisted bialgebra (Proposition \ref{proptensor}).
Consequently, if $(V,\delta_V)$ is a bialgebra and $(\bfP,m,\Delta,\delta)$ is a twisted bialgebra with 
a contraction-extraction coproduct, then $(\bfP\circ \com) \otimes \bfT_V$ is a twisted double bialgebra and,
consequently, $\calF[(\bfP\circ \com) \otimes \bfT_V]$ is a double bialgebra. 
Moreover, if $(V,\cdot,\delta_V)$ is a commutative, not necessarily unitary bialgebra, then this double bialgebra admits
a particular quotient identified as a vector space with
\[\calF_V[\bfP]=\bigoplus_{n=0}^\infty V^{\otimes n}\otimes_{\sym_n} \bfP[\{1,\ldots,n\}],\]
where the symmetric group $\sym_n$ acts on the left of $\bfP[\{1,\ldots,n\}]$ by the species structure of $\bfP$
and on the right on $V^{\otimes n}$ by permutations of the factors of tensors. 
This defines a bifunctor $\calF_\cdot[\cdot]$, which associates to any commutative bialgebra $(V,\cdot,\delta_V)$ 
and any twisted bialgebra $(\bfP,m,\Delta)$ with an extraction-contraction coproduct $\delta$ 
a double twisted bialgebra $\calF_V[\bfP]$ (Corollary \ref{corbifoncteur}).  This bifunctor is called the 
$V$-coloured bosonic Fock functor. As an example, this bifunctor is applied to the twisted bialgebra of graphs. 
Therefore, for any commutative bialgebra $V$, we obtain a double bialgebra of graphs which vertices are decorated by elements of $V$,
with linearity relations on each vertex. The product is the disjoint union and the first coproduct $\Delta$ 
is combinatorially given by disjunction of the set of vertices into two parts. For example, if $v_1,v_2,v_3\in V$,
\begin{align*}
\Delta(\hspace{1mm}\gdtroisun{\hspace{-4mm}$v_1$}{$v_3$}{\hspace{-1mm}$v_2$}\:)&=
\hspace{1mm}\gdtroisun{\hspace{-4mm}$v_1$}{$v_3$}{\hspace{-1mm}$v_2$}\:\otimes 1
+\tddeux{$v_1$}{$v_2$}\:\otimes \tdun{$v_3$}\:
+\tddeux{$v_2$}{$v_3$}\:\otimes \tdun{$v_1$}\:+\tddeux{$v_1$}{$v_3$}\:\otimes \tdun{$v_2$}\:\\
&+1\otimes \hspace{1mm}\gdtroisun{\hspace{-4mm}$v_1$}{$v_3$}{\hspace{-1mm}$v_2$}\:
+\tdun{$v_3$}\:\otimes  \tddeux{$v_1$}{$v_2$}\:+\tdun{$v_1$}\:\otimes  \tddeux{$v_2$}{$v_3$}\:
+\tdun{$v_2$}\:\otimes \tddeux{$v_1$}{$v_3$}.
\end{align*}
This coproduct is cocommutative.
The second coproduct is given by contractions of edges (for the left side) and elimination of edges (for the right side). 
 For example, if $v_1,v_2,v_3\in V$,
\begin{align*}
\delta(\hspace{1mm}\gdtroisun{\hspace{-4mm}$v_1$}{$v_3$}{\hspace{-1mm}$v_2$}\:)&=
\hspace{1mm}\gdtroisun{\hspace{-4mm}$v_1'$}{$v_3'$}{\hspace{-1mm}$v_2'$}\:\otimes
\tdun{$v_1''$}\hspace{2mm}\tdun{$v_2''$}\hspace{2mm}
\tdun{$v_3''$}\hspace{2mm}+\tdun{$v_1'\cdot v_2'\cdot v_3'$}\hspace{10mm}\otimes \hspace{1mm}\gdtroisun{\hspace{-4mm}$v_1''$}{$v_3''$}{\hspace{-1mm}$v_2''$}\:\\
&+\tddeux{$v_1'\cdot v_2'$}{$v_3'$}\hspace{6mm} \otimes \tddeux{$v_1''$}{$v_2''$}\hspace{2mm}\tdun{$v_3''$}\hspace{2mm}
+\tddeux{$v_1'\cdot v_3'$}{$v_2'$}\hspace{6mm} \otimes \tddeux{$v_1''$}{$v_3''$}\hspace{2mm}\tdun{$v_2''$}\hspace{2mm}
+\tddeux{$v_2'\cdot v_3'$}{$v_1'$}\hspace{6mm} \otimes \tddeux{$v_2''$}{$v_3''$}\hspace{2mm}\tdun{$v_1''$}\hspace{2mm},
\end{align*}
using Sweedler's notation for the coproduct of $V$. 
this coproduct $\delta$ is not cocommutative. 
Note that, contrarily to $\Delta$, the bialgebraic structure of $V$ is needed to define $\delta$. \\

Other examples of twisted bialgebras (on mixed graphs, on hypergraphs, on partitions$\ldots$)
with an extraction-contraction coproducts will be given in a series of forthcoming papers  with various combinatorial applications.\\

This paper is organised as follows. The first section gives reminders and notations on species and twisted (bi-, co-)algebras.
It also introduces the twisted bialgebra of tensors $\bfT_V$ and the twisted double bialgebra of set compositions $\comp$. 
In the second section, extraction-contraction coproducts are introduced and studied.
The last section deals with coloured Fock functors, seen as a bifunctor giving double bialgebras.
The example of the species of graphs is detailed all along the text.\\

\textbf{Acknowledgements}. 
The author acknowledges support from the grant ANR-20-CE40-0007
\emph{Combinatoire Algébrique, Résurgence, Probabilités Libres et Opérades}.
The author thanks Pierre Catoire for his careful reading. \\

\begin{notation} \begin{enumerate}
\item We denote by $\K$ a commutative field. Any vector space in this field will be taken over $\K$.
\item For any $n\in \N$, we denote by $[n]$ the set $\{1,\ldots,n\}$. In particular, $[0]=\emptyset$.
\end{enumerate}\end{notation}

\section{Twisted algebras, bialgebras and coalgebras}

\label{sect1}

\subsection{Reminders on species and twisted objects}

Recall \cite{Joyal1981,Joyal1986} that a species is a functor $\bfP$ from the category of finite sets with bijections to the category
of vector spaces. For any finite set $X$, the vector space associated to $X$ by $\bfP$ is denoted by $\bfP[X]$.
If $\sigma:X\longrightarrow Y$ is a bijection between two finite sets, the linear bijection associated to $\sigma$
is denoted by $\bfP[\sigma]:\bfP[X]\longrightarrow \bfP[Y]$. 

If $\bfP$ and $\bfQ$ are two species, a species morphism from $\bfP$ to $\bfQ$ is a natural transformation
between these functors, that is to say, for any finite set $X$, a linear  map $f_X:\bfP[X]\longrightarrow \bfQ[X]$
such that for any bijection $\sigma:X\longrightarrow Y$ between two finite sets, the following square commutes:
\[\xymatrix{\bfP[X]\ar[r]^{f_X}\ar[d]_{\bfP[\sigma]}&\bfQ[X]\ar[d]^{\bfQ[\sigma]}\\
\bfP[Y]\ar[r]_{f_Y}&\bfQ[Y]}\]
This defines a category of species. \\

If $\bfP$ and $\bfQ$ are two species, their Cauchy tensor product is defined as follows:
\begin{itemize}
\item For any finite set $X$,
\[\bfP\otimes \bfQ[X]=\bigoplus_{X=X_1\sqcup X_2}\bfP[X_1]\otimes \bfQ[X_2].\]
\item If $\sigma:X\longrightarrow Y$ is a bijection between two finite sets, then $\bfP\otimes \bfQ[\sigma]$
is defined by
\[\bfP\otimes \bfQ[\sigma]_{\mid \bfP[X_1]\otimes \bfQ[X_2]}
=\bfP[\sigma_{\mid X_1}]\otimes \bfQ[\sigma_{\mid X_2}],\]
for any sets $X_1,X_2$ such that $X=X_1\sqcup X_2$. This takes its values in $\bfP[\sigma(X_1)]\otimes
\bfQ[\sigma(X_2)]\subseteq \bfP\otimes \bfQ[Y]$. 
\end{itemize}
If $f:\bfP\longrightarrow \bfP'$ and $g:\bfQ\longrightarrow \bfQ'$ are species morphism, then 
\[(f\otimes g)_X=\bigoplus_{X=X_1\sqcup X_2}f_{X_1}\otimes g_{X_2}:\bfP\otimes \bfQ[X]\longrightarrow \bfP'\otimes \bfQ'[X].\]
The unit species $\bfI$ is given by
\[\bfI[X]=\begin{cases}
\K\mbox{ if }X=\emptyset,\\
(0)\mbox{ otherwise}.
\end{cases}\]
Moreover, if $\bfP$ and $\bfQ$ are two species, then $\bfP\otimes \bfQ$ and $\bfQ\otimes \bfP$ are
naturally isomorphic, thanks to the flip $c_{\bfP,\bfQ}$, which we will often denote simply by $c$:
\[c_{\bfP,\bfQ}:\left\{\begin{array}{rcl}
\bfP\otimes \bfQ&\longrightarrow &\bfQ\otimes \bfP\\
x\otimes y\in \bfP[X]\otimes \bfQ[Y]\subseteq \bfP\otimes \bfQ[X\sqcup Y]&\longrightarrow&
y\otimes x\in \bfQ[Y]\otimes \bfP[X]\subseteq \bfQ\otimes \bfP[X\sqcup Y].
\end{array}\right.\]
Hence, the category of species, with the Cauchy tensor product, is symmetric monoidal.\\

The category of species has a second tensor product, called the Hadamard tensor product and here denoted by $\boxtimes$.
\begin{itemize}
\item If $\bfP$ and $\bfQ$ are two species, for any finite set $X$,
\[\bfP\boxtimes \bfQ[X]=\bfP[X]\otimes \bfQ[X].\]
\item If $f:X\longrightarrow Y$ is a bijection between two finite sets, then
\[\bfP\boxtimes \bfQ[\sigma]=\bfP[\sigma]\otimes \bfQ[\sigma]:\bfP\boxtimes \bfP[X]\longrightarrow \bfP\boxtimes \bfQ[Y].\]
\end{itemize}
If $f:\bfP\longrightarrow \bfP'$ and $g:\bfQ\longrightarrow \bfQ'$ are species morphisms, then 
\[(f\boxtimes g)_X=f_X\otimes g_X:\bfP\boxtimes \bfQ[X]\longrightarrow \bfP'\boxtimes \bfQ'[X].\]
The unit is the species $\com$, defined as follows. For any finite set $X$, $\com[X]=\K$ and for any bijection $\sigma$
between two finite sets, $\com[\sigma]=\id_\K$. To make things more understandable,
we shall denote the element $1\in \K=\com[X]$ by $1_X$ for any finite set $X$.
Equipped with $\boxtimes$, the category of species is symmetric monoidal. \\

In the sequel, if $\bfP$ is a species, we shall write $\bfP[n]$ instead of $\bfP\big{[}[n]\big{]}$, for any $n\in \N$. 

\begin{remark}
We shall also use species taking their values in the category of sets (they will be called set species)
or taking their values in the category of coalgebras (they will be called coalgebraic species). 
\end{remark}

\begin{defi} \cite{Aguiar2010,Joyal1981,Joyal1986,Patras2004}
A twisted algebra is an associative and unitary algebra in the category of species with the Cauchy tensor product.
In other words, it is a pair $(\bfP,m)$ where $\bfP$ is a species and $m:\bfP\otimes \bfP\longrightarrow \bfP$ 
is a morphism of species such that the following diagram commutes:
\[\xymatrix{\bfP^{\otimes 3} \ar[r]^{m\otimes \id_\bfP}\ar[d]_{\id_\bfP\otimes m}&\bfP^{\otimes 2}\ar[d]^{m}\\
\bfP^{\otimes 2}\ar[r]_{m}&\bfP}\]
Moreover, there exists a morphism of species $\iota_\bfP:\bfI\longrightarrow \bfP$ such that the following diagram commutes:
\[\xymatrix{\bfI\otimes \bfP\ar[r]^{\iota_\bfP \otimes \id_\bfP}\ar[rd]_{\id}&\bfP^{\otimes 2}\ar[d]^m
&\bfP\otimes \bfI \ar[l]_{\id_\bfP\otimes \iota_\bfP}\ar[ld]^{\id}\\
&\bfP&}\]
The unit element of $\bfP$ is $1_\bfP=\iota_\bfP(1)\in \bfP[\emptyset]$. 
We shall say that $(\bfP,m)$ is commutative if $m\circ c_{\bfP,\bfP}=m$.
\end{defi}

In other words, a twisted algebra $\bfP$ comes with maps $m_{X,Y}:\bfP[X]\otimes \bfP[Y]\longrightarrow \bfP[X\sqcup Y]$ 
 for any finite sets $X$ and $Y$, such that:
\begin{itemize}
\item For any bijections $\sigma:X\longrightarrow X'$ and $\tau:Y\longrightarrow Y'$ between finite sets, the
following diagram commutes:
\[\xymatrix{\bfP[X]\otimes \bfP[Y]\ar[rr]^{m_{X,Y}}\ar[d]_{\bfP[\sigma]\otimes \bfP[\tau]}
&&\bfP[X\sqcup Y]\ar[d]^{^\bfP[\sigma\sqcup \tau]}\\
\bfP[X']\otimes \bfP[Y']\ar[rr]_{m_{X',Y'}}&&\bfP[X'\sqcup Y']}\]
where
\[\sigma\sqcup \tau:\left\{\begin{array}{rcl}
X\sqcup Y&\longrightarrow&X'\sqcup Y'\\
x\in X&\longrightarrow&\sigma(x),\\
y\in Y&\longrightarrow&\tau(y).
\end{array}\right.\]
\item For any finite sets $X,Y,Z$, the following diagram commutes:
\[\xymatrix{\bfP[X]\otimes \bfP[Y]\otimes \bfP[Z]\ar[d]_{\id_{\bfP[X]}\otimes m_{Y,Z}}
\ar[rr]^{m_{X,Y}\otimes \id_{\bfP[Z]}}&&\bfP[X\sqcup Y]\otimes \bfP[Z]\ar[d]^{m_{X\sqcup Y,Z}}\\
\bfP[X]\otimes \bfP[Y\sqcup Z]\ar[rr]_{m_{X,Y\sqcup Z}}&&\bfP[X\sqcup Y\sqcup Z]}\]
\item There exists an element $1_\bfP\in \bfP[\emptyset]$ such that for any finite set $X$, for any $x\in \bfP[X]$,
\[m_{\emptyset,X}(1_\bfP\otimes x)=m_{X,\emptyset}(x\otimes 1_\bfP)=x.\]
\end{itemize}
Moreover, $\bfP$ is commutative if, and only if, for any finite sets $X,Y$, in $\bfP[X\sqcup Y]$,
\begin{align*}
&\forall x\in \bfP[X], \:\forall y\in \bfP[Y],&m_{X,Y}(x\otimes y)=m_{Y,X}(y\otimes x).
\end{align*}

Dually:
\begin{defi}
A twisted coalgebra is a coassociative and counitary coalgebra in the category of species.
In other words, it is a pair $(\bfP,\Delta)$ where $\bfP$ is a species and $\Delta:\bfP\longrightarrow \bfP\otimes \bfP$ 
is a morphism of species such that the following diagram commutes:
\[\xymatrix{\bfP\ar[r]^{\Delta}\ar[d]_{\Delta}&\bfP^{\otimes 2}\ar[d]^{\id_\bfP\otimes \Delta}\\
\bfP^{\otimes 2}\ar[r]_{\Delta \otimes \id_\bfP}&\bfP^{\otimes 3}}\]
Moreover, there exists a morphism of species $\varepsilon_\bfP:\bfP\longrightarrow \bfI$ such that the following diagram commutes:
\[\xymatrix{\bfI\otimes \bfP\ar[r]^{\id}&\bfP\ar[d]^\Delta&\bfP\otimes \bfI \ar[l]_{\id}\\
&\bfP^{\otimes 2}\ar[lu]^{\varepsilon_\bfP \otimes \id_\bfP}\ar[ru]_{\id_\bfP\otimes \varepsilon_\bfP}&}\]
We shall say that $(\bfP,\Delta)$ is cocommutative if $c_{\bfP,\bfP}\circ \Delta=\Delta$.
We shall say that the coalgebra $(\bfP,\Delta)$ is connected if $\bfP[\emptyset]$ is one-dimensional.
\end{defi}

In other words, a twisted coalgebra $\bfP$ comes with maps  $\Delta_{X,Y}:\bfP[X\sqcup Y]\longrightarrow \bfP[X]\otimes \bfP[Y]$
for any finite sets $X$ and $Y$, such that:
\begin{itemize}
\item For any bijections $\sigma:X\longrightarrow X'$ and $\tau:Y\longrightarrow Y'$ between finite sets, 
the following diagram commutes:
\[\xymatrix{\bfP[X\sqcup Y]\ar[rr]^{\Delta_{X,Y}}\ar[d]_{\bfP[\sigma\sqcup \tau]}
&&\bfP[X]\otimes \bfP[Y]\ar[d]^{\bfP[\sigma]\otimes \bfP[\tau]}\\
\bfP[X'\sqcup Y']\ar[rr]_{\Delta_{X',Y'}}&&\bfP[X']\otimes \bfP[Y']}\]
\item For any finite sets $X,Y,Z$, the following diagram commutes:
\[\xymatrix{\bfP[X\sqcup Y\sqcup Z]\ar[rr]^{\Delta_{X\sqcup Y,Z}} \ar[d]_{\Delta_{X\sqcup Y,Z}}
&&\bfP[X\sqcup Y]\otimes \bfP[Z]\ar[d]^{\Delta_{X,Y}\otimes \id_{\bfP[Z]}}\\
\bfP[X]\otimes \bfP[Y\sqcup Z]\ar[rr]_{\id_{\bfP[X]}\otimes \Delta_{Y,Z}}&&\bfP[X]\otimes \bfP[Y]\otimes \bfP[Z]}\]
\item There exists a linear map $\varepsilon_\bfP:\bfP[\emptyset]\longrightarrow \K$ 
such that for any finite set $X$, the following diagram commutes:
\[\xymatrix{\bfP[\emptyset]\otimes \bfP[X]\ar[rrd]_{\varepsilon_\bfP\otimes \id_{\bfP[X]}}
&&\bfP[X] \ar[ll]_{\Delta_{\emptyset,X}}  \ar[rr]^{\Delta_{X,\emptyset}}
\ar[d]_{\id_{\bfP[X]}}&&\bfP[X]\otimes \bfP[\emptyset]\ar[lld]^{\hspace{3mm}\id_{\bfP[X]}\otimes \varepsilon_\bfP}\\
&&\bfP[X]&&}\]
\end{itemize}

As in the "classical" case of bialgebras in the category of vector spaces:

\begin{defi}
Let $\bfP$ be a species, both a twisted  algebra $(\bfP,m_\bfP)$ and a twisted coalgebra $(\bfP,\Delta_\bfP)$. 
The following conditions are equivalent:
\begin{enumerate}
\item $\varepsilon_\bfP:\bfP\longrightarrow \bfI$ and $\Delta_\bfP:\bfP\longrightarrow \bfP\otimes \bfP$ 
are algebra morphisms.
\item $\iota_\bfP:\bfI\longrightarrow \bfP$ and $m:\bfP\otimes \bfP\longrightarrow \bfP$ are coalgebra morphisms.
\end{enumerate}
If this holds, we shall say that $(\bfP,m_\bfP,\Delta_\bfP)$ is a  twisted bialgebra, that is to say a bialgebra in the category of species
\cite{Joyal1981,Joyal1986,Patras2004}.
\end{defi}

The compatibility between the product and coproduct gives the commutativity of the following diagram:
if $X$ is a finite set and $X=I\sqcup J=I'\sqcup J'$, 
\[\xymatrix{\bfP[I']\otimes \bfP[J'] \ar[rrr]^{m_{I',J'}}\ar[d]_{\Delta_{I'\cap I,I'\cap J}\otimes
\Delta_{J'\cap I,J'\cap J}}&&&\bfP[I'\sqcup J']=\bfP[I\sqcup J]\ar[dd]^{\Delta_{I,J}}\\
\bfP[I'\cap I]\otimes \bfP[I'\cap J]\otimes \bfP[J'\cap I]\otimes \bfP[J'\cap J]
\ar[d]_{\id_{\bfP[I'\cap I]}\otimes c_{\bfP[I'\cap J],\bfP[J'\cap I]}\otimes \id_{\bfP[J'\cap J]}}&&&\\
\bfP[I'\cap I]\otimes \bfP[J'\cap I]\otimes \bfP[I'\cap J]\otimes \bfP[J'\cap J]
\ar[rrr]_{\hspace{3cm}m_{I'\cap I,J'\cap I}\otimes m_{I'\cap J,J'\cap J}}&&&\bfP[I]\otimes \bfP[J]
}\]
or equivalently
\begin{align*}
\Delta_{I,J}\circ m_{I,'J'}&=(m_{I'\cap I,J'\cap I}\otimes m_{I'\cap J,J'\cap J})\circ (\id_{\bfP[I\cap I]}
\otimes c_{\bfP[I'\cap J],\bfP[J'\cap I]}\otimes \id_{\bfP[J'\cap J]})\\
&\circ (\Delta_{I'\cap I,I'\cap J}\otimes \Delta_{J'\cap I,J'\cap J}).
\end{align*}
The compatibility between the coproduct and the unit is written as $\Delta_\bfP(1_\bfP)=1_\bfP\otimes 1_\bfP$
and the compatibility between the counit and the product is equivalent to
\begin{align*}
&\forall x,y\in \bfP[\emptyset],&\varepsilon_\bfP(xy)=\varepsilon_\bfP(x)\varepsilon_\bfP(y).
\end{align*}

\subsection{The twisted algebra of tensor powers}

Let us fix a vector space $V$ and $X$ a finite set. 
We define the vector space $V^{\otimes X}$ (unordered tensor product indexed by $X$)
 as the vector space generated by sequences $(u_x)_{x\in X}\in V^X$ of elements of $V$ indexed by $X$, with the relations
\[(\lambda_x u_x+\mu_x v_x)_{x\in X}=\sum_{I\subseteq X} \prod_{x\in X}\nu_x^{(I)} \left(w_x^{(I)}\right)_{x\in X},\]
for any $(\lambda_x)_{x\in X}$, $(\mu_x)_{x\in X}\in \K^X$ and $(u_x)_{x\in X}$, $(v_x)_{x\in X}\in V^X$, with the notations
\begin{align*}
\nu_x^{(I)}&=\begin{cases}
\lambda_x\mbox{ if }x\in I,\\
\mu_x\mbox{ if }x\notin I,
\end{cases}&
w_x^{(I)}&=\begin{cases}
u_x\mbox{ if }x\in I,\\
v_x\mbox{ if }x\notin I.
\end{cases}
\end{align*}
The class of $(v_x)_{x\in X}$ in this space is denoted by $\displaystyle \bigotimes_{x\in X} v_x$.
By convention, $\bfT_V[\emptyset]=\K$.  \\

We obtain a species $\bfT_V$:
\begin{itemize}
\item For any finite set $X$, $\bfT_V[X]=V^{\otimes X}$.
\item For any bijection $\sigma:X\longrightarrow Y$ between two finite sets, $\bfT_V[\sigma]:V^{\otimes X}\longrightarrow 
V^{\otimes Y}$ sends $\displaystyle \bigotimes_{x\in X} v_x$ to $\displaystyle \bigotimes_{y\in Y} v_{\sigma^{-1}(y)}$.
\end{itemize}

Note that when $X=[n]$, we obtain the usual tensor product $V^{\otimes n}$, with 
\[ \bigotimes_{i\in [n]} v_i=v_1\otimes \ldots \otimes v_n,\]
which we shall simply denote here by $v_1\ldots v_n$.
The action of $\sym_n$ on $\bfT_V[n]$ induced by the species structure is the usual action on $V^{\otimes n}$ 
by permutations of tensors:
\begin{align*}
&\forall \sigma\in \sym_n,\: \forall v_1,\ldots,v_n \in V,
&\bfT_V[\sigma](v_1\ldots v_n)&=v_{\sigma^{-1}(1)}\ldots v_{\sigma^{-1}(n)}.
\end{align*}
 
\begin{prop}
The species $\bfT_V$ is a commutative and cocommutative twisted bialgebra, with the product and coproduct defined by
\begin{align*}
m_{X,Y}\left(\bigotimes_{x\in X} v_x\otimes \bigotimes_{y\in  Y} v_y\right)
&=\bigotimes_{z\in X\sqcup Y} v_z,&
\Delta_{X,Y}\left(\bigotimes_{z\in X\sqcup Y} v_z\right)&=\bigotimes_{x\in X} v_x\otimes 
\bigotimes_{y\in  Y} v_y.
\end{align*}
The unit is the element $1\in \K=\bfT_V[\emptyset]$ and the counit $\varepsilon_\Delta:\bfT_V[\emptyset]=\K\longrightarrow 
\bfI[\emptyset]=\K$ is the identity of $\K$.
\end{prop}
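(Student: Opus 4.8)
The plan is to verify directly that the proposed structure maps $m_{X,Y}$ and $\Delta_{X,Y}$ are well-defined species morphisms, and then to check the three families of axioms (twisted algebra, twisted coalgebra, bialgebra compatibility) using the explicit formulas. First I would check that $m_{X,Y}$ and $\Delta_{X,Y}$ are well-defined on $V^{\otimes X}$: the multilinearity relations imposed in the definition of $V^{\otimes X}$ are precisely those needed to make $\bigotimes_{z\in X\sqcup Y} v_z$ multilinear in each slot, so $m_{X,Y}$ descends from $V^X\times V^Y$, and likewise $\Delta_{X,Y}$ is the obvious ``split the index set'' map, which respects the relations in each of the two factors. Naturality with respect to bijections $\sigma\sqcup\tau$ is immediate from the description of $\bfT_V[\sigma]$ as relabelling of indices, since relabelling commutes with concatenating or splitting index sets.

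Next I would check associativity of $m$: both $m_{X\sqcup Y,Z}\circ(m_{X,Y}\otimes\id)$ and $m_{X,Y\sqcup Z}\circ(\id\otimes m_{Y,Z})$ send the triple tensor to $\bigotimes_{z\in X\sqcup Y\sqcup Z} v_z$, so they agree; the unit is $1\in\K=\bfT_V[\emptyset]$ by the convention $\bfT_V[\emptyset]=\K$ (here $\bigotimes_{x\in\emptyset}v_x$ is the empty tensor, identified with $1$), and $m_{\emptyset,X}$, $m_{X,\emptyset}$ are identities. Coassociativity of $\Delta$ is the mirror computation: both composites in the coassociativity square send $\bigotimes_{z\in X\sqcup Y\sqcup Z}v_z$ to $\bigotimes_{x\in X}v_x\otimes\bigotimes_{y\in Y}v_y\otimes\bigotimes_{z\in Z}v_z$. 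The counit $\varepsilon_{\bfT_V}:\bfT_V[\emptyset]=\K\to\bfI[\emptyset]=\K$ is $\id_\K$, and the counit axiom follows because $\Delta_{\emptyset,X}$ and $\Delta_{X,\emptyset}$ are identities after identifying $\bfT_V[\emptyset]\otimes\bfT_V[X]$ with $\bfT_V[X]$.

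The bialgebra compatibility is the only step with any combinatorial content, but it is still routine. Given $X=I\sqcup J=I'\sqcup J'$, one takes $\bigotimes_{x\in I'}v_x\otimes\bigotimes_{y\in J'}v_y$ and computes both sides of the hexagonal diagram. Along the top-right path, $m_{I',J'}$ produces $\bigotimes_{z\in X}v_z$ and then $\Delta_{I,J}$ splits it into $\bigotimes_{i\in I}v_i\otimes\bigotimes_{j\in J}v_j$. Along the left-bottom path, one first applies $\Delta_{I'\cap I,I'\cap J}\otimes\Delta_{J'\cap I,J'\cap J}$, then the middle flip $c$, then $m_{I'\cap I,J'\cap I}\otimes m_{I'\cap J,J'\cap J}$, which reassembles the four pieces into $\bigotimes_{i\in(I'\cap I)\sqcup(J'\cap I)}v_i\otimes\bigotimes_{j\in(I'\cap J)\sqcup(J'\cap J)}v_j$; since $(I'\cap I)\sqcup(J'\cap I)=I$ and $(I'\cap J)\sqcup(J'\cap J)=J$, the two agree. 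The compatibility $\Delta(1_{\bfT_V})=1_{\bfT_V}\otimes 1_{\bfT_V}$ and multiplicativity of $\varepsilon_{\bfT_V}$ on $\bfT_V[\emptyset]=\K$ are trivial. Finally, commutativity $m\circ c=m$ and cocommutativity $c\circ\Delta=\Delta$ both hold because concatenating (resp.\ splitting) an $X$-indexed and a $Y$-indexed family into an $(X\sqcup Y)$-indexed family does not depend on the order of the two index sets --- the disjoint union $X\sqcup Y$ carries no ordering. The main (minor) obstacle is simply keeping the bookkeeping of the four-fold intersection decomposition straight when writing out the compatibility diagram; there is no genuine difficulty, since $V^{\otimes X}$ is ``symmetric'' in $X$ by construction.
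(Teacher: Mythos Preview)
Your proposal is correct and is precisely the detailed verification that the paper omits: the paper's own proof consists of the single line ``Immediate verifications,'' and your argument supplies exactly those routine checks (well-definedness, naturality, (co)associativity, (co)unit, bialgebra compatibility, and (co)commutativity) in the expected way.
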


\begin{proof}
Immediate verifications. 
\end{proof}

\begin{example} \label{ex1.1}
If $V=\K$, up to an isomorphism we obtain the twisted bialgebra $\com$. The twisted products and coproducts are given by
\begin{align*}
m_{X,Y}(1_X\otimes 1_Y)&=1_{X\sqcup Y},&\Delta_{X,Y}(1_{X\sqcup Y})&=1_X\otimes 1_Y.
\end{align*}
\end{example}

\subsection{The twisted bialgebra of graphs}

We refer to \cite{Harary1969} for classical notations and results on graphs. 
A graph is a pair $G=(V(G),E(G))$, where $V(G)$ is a finite set and $E(G)$ is a set made of pairs of elements of $V(G)$.
For any finite set $X$, we denote by $\calG[X]$ the set of graphs $G$ such that $V(G)=X$
and by $\bfG[X]$ the vector space generated by $\calG[X]$. 
If $\sigma:X\longrightarrow Y$ is a bijection between two finite sets, it induces a bijection $\bfG[\sigma]:\bfG[X]
\longrightarrow \bfG[Y]$, sending any graph $G=(X,E(G))$ to the graph $\bfG[\sigma](G)$ defined by
\begin{align*}
V(\bfG[\sigma](G))&=Y,&E(\bfG[\sigma](G))&= \{\sigma(e)\mid e\in E(G)\}.
\end{align*}
This defines a species $\bfG$ and a set species $\calG$. For example,
\begin{align*}
\calG[\{a,b\}]&=\{\tddeux{$a$}{$b$},\tdun{$a$}\tdun{$b$}\},\\
\calG[\{a,b,c\}]&=\{\tdtroisdeux{$a$}{$b$}{$c$},\tdtroisdeux{$b$}{$a$}{$c$},\tdtroisdeux{$a$}{$c$}{$b$},
\gdtroisun{$a$}{$c$}{$b$},\tddeux{$a$}{$b$}\tdun{$c$},\tddeux{$a$}{$c$}\tdun{$b$},
\tddeux{$b$}{$c$}\tdun{$a$},\tdun{$a$}\tdun{$b$}\tdun{$c$}\}.
\end{align*}

If $G$ and $H$ are two graphs, their disjoint union is the graph
\[GH=(V(G)\sqcup V(H), E(G)\sqcup E(H)).\]
This is extended as a product on $\bfG$, making it a twisted algebra. 
The unit is the empty graph $1=(\emptyset,\emptyset)
\in \bfG[\emptyset]$.\\

If $G=(X,E(G))$ is a graph and $A\subseteq X$, we define the induced graph $G_{\mid A}$ by
\begin{align*}
V(G_{\mid A})&=A,&
E(G_{\mid A})&=\{\{x,y\}\in E(G)\mid x,y\in A\}.
\end{align*}
We then define a coproduct on $\bfG$ by putting for any graph $G\in \calG[A\sqcup Y]$,
\[\Delta_{A,Y}(G)=G_{\mid A}\otimes G_{\mid Y}.\]
It is coassociative, and the counit $\varepsilon_\Delta$ send the empty graph $1$ to $1$.

\begin{example} in $\bfG[\{a,b,c\}]$,
\begin{align*}
\Delta_{\{a\},\{b,c\}}(\tdtroisdeux{$a$}{$b$}{$c$})&=\tdun{$a$}\otimes \tddeux{$b$}{$c$},&
\Delta_{\{b,c\},\{a\}}(\tdtroisdeux{$a$}{$b$}{$c$})&=\tddeux{$b$}{$c$}\otimes \tdun{$a$},\\
\Delta_{\{b\},\{a,c\}}(\tdtroisdeux{$a$}{$b$}{$c$})&=\tdun{$b$}\otimes \tdun{$a$}\tdun{$c$},&
\Delta_{\{a,c\},\{b\}}(\tdtroisdeux{$a$}{$b$}{$c$})&=\tdun{$a$}\tdun{$c$}\otimes \tdun{$b$},\\
\Delta_{\{c\},\{a,b\}}(\tdtroisdeux{$a$}{$b$}{$c$})&=\tdun{$c$}\otimes \tddeux{$a$}{$b$},&
\Delta_{\{a,b\},\{c\}}(\tdtroisdeux{$a$}{$b$}{$c$})&=\tddeux{$a$}{$b$}\otimes \tdun{$c$},\\
\Delta_{\{a\},\{b,c\}}(\gdtroisun{$a$}{$c$}{$b$})&=\tdun{$a$}\otimes \tddeux{$b$}{$c$},&
\Delta_{\{b,c\},\{a\}}(\gdtroisun{$a$}{$c$}{$b$})&=\tddeux{$b$}{$c$}\otimes \tdun{$a$},\\
\Delta_{\{b\},\{a,c\}}(\gdtroisun{$a$}{$c$}{$b$})&=\tdun{$b$}\otimes \tddeux{$a$}{$c$},&
\Delta_{\{a,c\},\{b\}}(\gdtroisun{$a$}{$c$}{$b$})&=\tddeux{$a$}{$c$}\otimes \tdun{$b$},\\
\Delta_{\{c\},\{a,b\}}(\gdtroisun{$a$}{$c$}{$b$})&=\tdun{$c$}\otimes \tddeux{$a$}{$b$},&
\Delta_{\{a,b\},\{c\}}(\gdtroisun{$a$}{$c$}{$b$})&=\tddeux{$a$}{$b$}\otimes \tdun{$c$}.
\end{align*} 
\end{example}

\begin{prop}
\cite{Foissy39}
$(\bfG,m,\Delta)$ is a commutative and cocommutative twisted bialgebra.
\end{prop}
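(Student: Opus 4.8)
The plan is to verify that the disjoint union product $m$ and the restriction coproduct $\Delta$ on $\bfG$ satisfy the three families of axioms listed after the definitions of twisted algebra, twisted coalgebra and twisted bialgebra, namely: naturality with respect to bijections, associativity/coassociativity, unit/counit compatibility, and finally the product-coproduct compatibility. Since the statement is quoted from \cite{Foissy39}, one expects only routine checks; nonetheless I would organise them as follows.

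First I would check that $(\bfG,m)$ is a commutative twisted algebra. Naturality of $m_{X,Y}$ under bijections $\sigma:X\to X'$, $\tau:Y\to Y'$ is immediate from the definition of $\bfG[\sigma\sqcup\tau]$ on edges, since $E(\bfG[\sigma](G))=\{\sigma(e)\mid e\in E(G)\}$ and the edge sets of $G$ and $H$ are disjoint subsets of pairs from $X$ and $Y$ respectively. Associativity $m_{X\sqcup Y,Z}\circ(m_{X,Y}\otimes\id)=m_{X,Y\sqcup Z}\circ(\id\otimes m_{Y,Z})$ follows because both sides send $G\otimes H\otimes K$ to the graph on $X\sqcup Y\sqcup Z$ with edge set $E(G)\sqcup E(H)\sqcup E(K)$, using associativity of disjoint union of sets. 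The unit is the empty graph $1=(\emptyset,\emptyset)\in\bfG[\emptyset]$, and $m_{\emptyset,X}(1\otimes G)=m_{X,\emptyset}(G\otimes 1)=G$ is clear. Commutativity $m_{X,Y}(G\otimes H)=m_{Y,X}(H\otimes G)$ holds because disjoint union of sets is commutative.

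Next I would check that $(\bfG,\Delta)$ is a cocommutative twisted coalgebra. Naturality of $\Delta_{X,Y}$ under bijections is immediate from the fact that restriction commutes with relabelling: $\bfG[\sigma_{\mid A}](G_{\mid A})=(\bfG[\sigma\sqcup\tau](G))_{\mid \sigma(A)}$. Coassociativity $(\Delta_{X,Y}\otimes\id)\circ\Delta_{X\sqcup Y,Z}=(\id\otimes\Delta_{Y,Z})\circ\Delta_{X\sqcup Y,Z}$ (in the form written in the excerpt) follows because for $G\in\calG[X\sqcup Y\sqcup Z]$ both sides yield $G_{\mid X}\otimes G_{\mid Y}\otimes G_{\mid Z}$, since $(G_{\mid X\sqcup Y})_{\mid X}=G_{\mid X}$ and so on --- iterated restriction is restriction to the intersection. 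The counit $\varepsilon_\Delta:\bfG[\emptyset]\to\K$ sends $1$ to $1$, and $\Delta_{\emptyset,X}(G)=1\otimes G_{\mid X}=1\otimes G$, $\Delta_{X,\emptyset}(G)=G\otimes 1$ give the counit axiom. Cocommutativity $c_{\bfG,\bfG}\circ\Delta=\Delta$ amounts to $G_{\mid Y}\otimes G_{\mid X}$ being the image of $G_{\mid X}\otimes G_{\mid Y}$ under the flip, which is tautological.

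Finally, the main point --- and the only step requiring care --- is the product-coproduct compatibility: for a finite set $X$ with two decompositions $X=I\sqcup J=I'\sqcup J'$, I must show that $\Delta_{I,J}\circ m_{I',J'}$ equals the composite through the four-fold intersection described in the excerpt. Writing $G\in\calG[I']$, $H\in\calG[J']$, the left side is $(GH)_{\mid I}\otimes(GH)_{\mid J}$ where $GH=(I'\sqcup J',E(G)\sqcup E(H))$; the right side is $m_{I'\cap I,J'\cap I}(G_{\mid I'\cap I}\otimes H_{\mid J'\cap I})\otimes m_{I'\cap J,J'\cap J}(G_{\mid I'\cap J}\otimes H_{\mid J'\cap J})$ (after applying the flip on the middle two tensor factors). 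The key combinatorial identity is that for an edge $e\in E(G)\sqcup E(H)$, $e$ lies in $E((GH)_{\mid I})$ iff both endpoints lie in $I$; if $e\in E(G)$ this means both endpoints lie in $I'\cap I$, i.e.\ $e\in E(G_{\mid I'\cap I})$, and symmetrically for $H$; hence $(GH)_{\mid I}=G_{\mid I'\cap I}\,H_{\mid J'\cap I}$ as graphs on $I=(I'\cap I)\sqcup(J'\cap I)$, and likewise $(GH)_{\mid J}=G_{\mid I'\cap J}\,H_{\mid J'\cap J}$. This is exactly the claimed equality. The compatibility of $\Delta$ with the unit ($\Delta(1)=1\otimes 1$) and of $\varepsilon_\Delta$ with the product on $\bfG[\emptyset]=\K\cdot 1$ are trivial. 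This establishes that $(\bfG,m,\Delta)$ is a twisted bialgebra, commutative and cocommutative.
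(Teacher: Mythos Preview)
Your verification is correct and complete. The paper itself does not give a proof of this proposition: it is stated with a bare citation to \cite{Foissy39} and no \texttt{proof} environment follows. So there is nothing to compare your argument against beyond noting that you have supplied the routine checks the paper chose to outsource. Your key computation for the Hopf compatibility, namely $(GH)_{\mid I}=G_{\mid I'\cap I}\,H_{\mid J'\cap I}$, is exactly the right identity and your justification via the edge-set partition $E(GH)=E(G)\sqcup E(H)$ is the natural one.
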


\subsection{Double twisted bialgebras}

\begin{defi}
A twisted bialgebra of the second kind is a twisted algebra in the symmetric monoidal (with the Cauchy tensor product) 
category of coalgebraic species, that is to say a triple $(\bfP,m,\delta)$ such that:
\begin{itemize}
\item $(\bfP,m)$ is a twisted algebra, of unit denoted by $1_\bfP$.
\item For any finite set $X$, $\delta_X:\bfP[X]\longrightarrow \bfP[X]\otimes \bfP[X]$ is a coassociative and counitary coproduct
making $\bfP[X]$ a coalgebra, which counit is denoted by $\epsilon_X$. Moreover, for any bijection $\sigma:X\longrightarrow Y$
between two finite sets, the following diagrams commute:
\begin{align*}
&\xymatrix{\bfP[X]\ar[r]^(.4){\delta_X}\ar[d]_{\bfP[\sigma]}&\bfP[X]\otimes \bfP|X] \ar[d]^{\bfP[\sigma]\otimes \bfP[\sigma]}\\
\bfP[Y]\ar[r]_(.4){\delta_Y}&\bfP[Y]\otimes \bfP|Y]}&
&\xymatrix{\bfP[X]\ar[r]^{\epsilon_X}\ar[d]_{\bfP[\sigma]}&\K\ar[d]^{\id_\K}\\
\bfP[Y]\ar[r]_{\epsilon_Y}&\K}
\end{align*}
\item For any finite sets $X$ and $Y$,  the following diagram commutes:
\[\xymatrix{\bfP[X]\otimes \bfP[Y]\ar[rrr]^{m_{X,Y}}\ar[d]_{\delta_X\otimes \delta_Y}
&&&\bfP[X\sqcup Y]\ar[dd]^{\delta_{X\sqcup Y}}\\
\bfP[X]\otimes \bfP[X]\otimes \bfP[Y]\otimes \bfP[Y]\ar[d]_{\id_{\bfP[X]} \otimes c_{\bfP,\bfP}\otimes \id_{\bfP[Y]}}&&&\\
\bfP[X]\otimes \bfP[Y]\otimes \bfP[X]\otimes \bfP[Y]\ar[rrr]_{m_{X,Y}\otimes m_{X,Y}}
&&&\bfP[X\sqcup Y]\otimes \bfP[X\sqcup Y]}\]
\item $\delta_\emptyset(1_\bfP)=1_\bfP\otimes 1_\bfP$. 
\end{itemize}
\end{defi}

These objects can also be interpreted as bialgebras in the symmetric monoidal category of species, with the Hadamard tensor product.

\begin{notation}
Let $(\bfP,m,\delta)$ be a twisted bialgebra of the second kind. For any finite sets $X_1,\ldots,X_k$,
putting $X=X_1\sqcup \ldots \sqcup X_k$, then $\bfP[X_1]\otimes\ldots \otimes \bfP[X_k]$ becomes a right 
$(\bfP[X],\delta_X)$-comodule with the coaction $\rho_{X_1,\ldots,X_k}$ defined by
\[m_{1,3,\ldots,2k-1,24\ldots 2k}\circ (\delta_{X_1}\otimes \ldots \otimes \delta_{X_k}):
\bfP[X_1]\otimes \ldots  \otimes\bfP[X_k]\longrightarrow (\bfP[X_1]\otimes \ldots \otimes \bfP[X_k])\otimes \bfP[X]\]
where
\[m_{1,3,\ldots,24\ldots 2k}:\left\{\begin{array}{rcl}
\bfP[X_1]^{\otimes 2}\otimes \ldots \otimes \bfP[X_k]^{\otimes 2}&\longrightarrow
&\bfP[X_1]\otimes \ldots \otimes \bfP[X_k]\otimes \bfP[X]\\
a_1\otimes b_1\otimes \ldots \otimes a_k\otimes b_k&\longrightarrow&
a_1\otimes \ldots \otimes a_k\otimes m_{X_1,\ldots,X_k}(b_1\otimes \ldots \otimes b_k). 
\end{array}\right.\]
In particular, for any finite set $X$, $\rho_X=\delta_X$. \\
\end{notation}

\begin{remark}
For any finite sets $X$ and $Y$, the product $m_{X,Y}:\bfP[X]\otimes \bfP[Y]\longrightarrow \bfP[X\sqcup Y]$
and the unit $\iota_\bfP:\K\longrightarrow \bfP[\emptyset]$ are comodule morphisms if and only if
\begin{align*}
\delta_{X\sqcup Y}\circ m_{X,Y}&=(m_{X,Y}\otimes \id_{\bfP[X\sqcup Y]})\circ \rho_{X,Y},&
\delta_\emptyset(1_\bfP)&=1_\bfP\otimes 1_\bfP,
\end{align*}
or, equivalently,
\begin{align*}
\delta_{X\sqcup Y}\circ m_{X,Y}&=(m_{X,Y}\otimes m_{X,Y})\circ \delta_{X\sqcup Y},&
\delta_\emptyset(1_\bfP)&=1_\bfP\otimes 1_\bfP,
\end{align*}
that is to say if, and only if, $\delta:\bfP\longrightarrow \bfP\boxtimes \bfP$ is a twisted algebra morphism.
\end{remark}

\begin{defi}
A double twisted bialgebra is a family $(\bfP,m,\Delta,\delta)$ such that:
\begin{itemize}
\item $(\bfP,m,\Delta)$ is a twisted bialgebra, of unit and counit respectively denoted by $1_\bfP$ and $\varepsilon_\Delta$.
\item $(\bfP,m,\delta)$ is a twisted bialgebra of the second kind, of counit $\epsilon_\delta$.
\item The coproduct $\Delta$ and the counit $\varepsilon_\Delta$ are comodule morphisms, that is to say:
\begin{itemize}
\item For any finite sets $X$ and $Y$, the coproduct $\Delta_{X,Y}:\bfP[X\sqcup Y]\longrightarrow \bfP[X]\otimes \bfP[Y]$
is a $\bfP[X\sqcup Y]$-comodule morphism, that is to say
\[\rho_{X,Y}\circ \Delta_{X,Y}=(\Delta_{X,Y}\otimes \id)\circ \delta_{X\sqcup Y}.\]
\item The counit $\varepsilon_\Delta:\bfP[\emptyset]\longrightarrow \K$ is a $\bfP[\emptyset]$-comodule morphism, 
that is to say, for any $x\in \bfP[\emptyset]$,
\[\varepsilon_\Delta(x)1_\bfP=(\varepsilon_\Delta\otimes \id)\circ \delta_\emptyset.\]
\end{itemize}
\end{itemize}
\end{defi}

\begin{example}\label{ex1.2}
The species $\com$  is given a double twisted bialgebra structure:
for any finite disjoint sets $X$ and $Y$,
\begin{align*}
m_{X,Y}(1_X\otimes 1_Y)&=1_{X\sqcup Y},&
\Delta_{X,Y}(1_{X\sqcup Y})&=1_X\otimes 1_Y,&
\delta_X(1_X)&=1_X\otimes 1_X.
\end{align*}
\end{example}

\begin{example}\cite{Foissy39}
For any finite set $X$, let us denote by $\calcomp[X]$ the set of set compositions of $X$, that is to say finite ordered  sequences
$(X_1,\ldots,X_k)$ of nonempty subsets of $X$ such that $X_1\sqcup\ldots \sqcup X_k=X$. This defines a set species.
The vector space generated by $\calcomp[X]$ is denoted by $\comp[X]$, and this defines a species $\comp$.
It is a double twisted bialgebra:
\begin{itemize}
\item For any $(C_1,C_2)\in \calcomp[X]\times \calcomp[Y]$,
\[C_1\squplus C_2=\sum_{\substack{C\in \calcomp[X\sqcup Y],\\ C_{\mid X}=C_1,\: C_{\mid Y}=C_2}}
C,\]
where, if $C=(X_1,\ldots,X_n)\in \calcomp[Y']$ and $X'\subseteq Y'$, 
$C_{\mid X'}$ is obtained from the sequence $(X_1\cap X',\ldots, X_k\cap X')$ by deletion of the empty sets. This is an element
of $\calcomp[X']$. For example, if $X,Y,Z,T$ are finite sets:
\begin{align*}
(X)\squplus (Y)&=(X,Y)+(Y,X)+(X\sqcup Y),\\
(X,Y)\squplus (Z)&=(X,Y,Z)+(X,Z,Y)+(Z,X,Y)+(X\sqcup Y,Z)+(X,Y\sqcup Z),\\
(X,Y)\squplus (Z,T)&=(X,Y,Z,T)+(X,Z,Y,T)+(Z,X,Y,T)\\
&+(X,Z,T,Y)+(Z,X,T,Y)+(Z,T,X,Y)\\
&+(X,Y\sqcup Z,T)+(X\sqcup Z,Y,T)+(X\sqcup Z,T,Y)\\
&+(X,Z,Y\sqcup T)+(Z,X,Y\sqcup T)+(Z,X\sqcup T,Y)+(X\sqcup Z,Y\sqcup T),
\end{align*}
\item If $(X_1,\ldots,X_k)\in \calcomp[X\sqcup Y]$,
\[\Delta_{X,Y}((X_1,\ldots,X_k))=\begin{cases}
(X_1,\ldots,X_i)\otimes (X_{i+1},\ldots, X_k)\\
\hspace{1cm}\mbox{ if there exists a (necessarily unique) $i$}\\
\hspace{1cm}\mbox{ such that $X=X_1\sqcup \ldots X_i$},\\
0\mbox{ otherwise.}
\end{cases}\]
\item If $(X_1,\ldots,X_k)\in \calcomp[X]$,
\begin{align*}
&\delta_X((X_1,\ldots,X_k))\\
&=\sum_{1\leq i_1<\ldots<i_p<k}\left(\bigsqcup_{i=1}^{i_1} X_i,\ldots, \bigsqcup_{i=i_p+1}^k X_i\right)
\otimes (X_1,\ldots,  X_{i_1})\squplus \ldots\squplus (X_{i_p+1},\ldots,  X_k).
\end{align*}
For example, if $X$, $Y$ and $Z$ are finite sets:
\begin{align*}
\delta(X)&=(X)\otimes (X),\\
\delta(X,Y)&=(X,Y)\otimes (X)\squplus (Y)+(X\sqcup Y)\otimes (X,Y),\\
\delta(X,Y,Z)&=(X,Y,Z)\otimes (X)\squplus (Y)\squplus (Z)+(X,Y\sqcup Z)\otimes (X)\squplus (Y,Z)\\
&+(X\sqcup Y,Z)\otimes (X,Y)\squplus (Z)+(X\sqcup Y\sqcup Z)\otimes (X,Y,Z).
\end{align*}
The counit is given by $\epsilon_\delta(X_1,\ldots, X_k)=\delta_{k,1}$ if $k\geq 1$. 
\end{itemize}\end{example}

\subsection{Double twisted algebras of tensor products}

\begin{notation} 
Let $(V,\delta_V)$ be a coalgebra.
We shall use Sweedler's notation $\delta_V(v)=v'\otimes v''$ for this coalgebra.
\end{notation}

\begin{prop}\label{proptensor}
Let $(V,\delta_V)$ be a coalgebra.  Its counit is denoted by $\epsilon_V$. The twisted bialgebra $(\bfT_V,m,\Delta)$ 
is given a structure of double twisted bialgebra with the coproduct defined by
\[\delta_X\left(\bigotimes_{x\in X} v_x\right)
=\left(\bigotimes_{x\in X} v'_x\right)\otimes\left(\bigotimes_{x\in X} v'_x\right).\]
The counit $\epsilon_\delta$ sends $\displaystyle \bigotimes_{x\in X} v_x$ to $\displaystyle \prod_{x\in X}\epsilon_V(v_x)$.
\end{prop}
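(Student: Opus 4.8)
The plan is to verify in turn each of the four axioms in the definition of a twisted bialgebra of the second kind for the data $(\bfT_V, m, \delta)$, where $\delta$ is the stated coproduct. All of the verifications reduce, by the defining relations of $V^{\otimes X}$, to the coassociativity, counitality and naturality properties already possessed by $\delta_V$, so the work is essentially bookkeeping with unordered tensor products; I expect no conceptual obstacle.

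First I would check that each $\delta_X$ is a coassociative, counitary coproduct on $\bfT_V[X]$. Coassociativity of $\delta_X$ follows termwise from coassociativity of $\delta_V$: applying $(\delta_X\otimes\id)\circ\delta_X$ and $(\id\otimes\delta_X)\circ\delta_X$ to $\bigotimes_{x\in X} v_x$ both produce $\bigotimes_{x\in X} v_x^{(1)}\otimes\bigotimes_{x\in X}v_x^{(2)}\otimes\bigotimes_{x\in X}v_x^{(3)}$ in the iterated Sweedler notation of $V$. Counitality is the identity $(\epsilon_\delta\otimes\id)\circ\delta_X=\id=(\id\otimes\epsilon_\delta)\circ\delta_X$, which comes from $\prod_{x\in X}\epsilon_V(v_x')\,\bigotimes_{x\in X}v_x''=\bigotimes_{x\in X}\epsilon_V(v_x')v_x''=\bigotimes_{x\in X}v_x$, using the multilinearity that defines $V^{\otimes X}$ to pull the scalars $\epsilon_V(v_x')$ inside. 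One subtlety worth a sentence is that $\delta_X$ and $\epsilon_\delta$ are well defined on $V^{\otimes X}$, i.e.\ respect the defining relations; this is immediate because $\delta_V$ and $\epsilon_V$ are linear and the relations are multilinear in each slot.

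Next I would check naturality: for a bijection $\sigma:X\longrightarrow Y$ the squares $\delta_Y\circ\bfT_V[\sigma]=(\bfT_V[\sigma]\otimes\bfT_V[\sigma])\circ\delta_X$ and $\epsilon_\delta\circ\bfT_V[\sigma]=\epsilon_\delta$ commute; both hold because $\bfT_V[\sigma]$ merely relabels the indexing set, and $\delta_V$, $\epsilon_V$ act slotwise and are insensitive to the labels. Then comes the compatibility of $\delta$ with the product $m$: starting from $\bigotimes_{x\in X}v_x\otimes\bigotimes_{y\in Y}v_y$ and following the left-bottom path gives $\bigotimes_{z\in X\sqcup Y}v_z'\otimes\bigotimes_{z\in X\sqcup Y}v_z''$, and following $\delta_{X\sqcup Y}\circ m_{X,Y}$ gives the same expression, since $m$ just merges the indexing sets and $\delta_V$ is applied slotwise on either side; the flip $c_{\bfP,\bfP}$ in the diagram accounts exactly for interchanging the second factor of the $X$-part with the first factor of the $Y$-part. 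Finally $\delta_\emptyset(1_{\bfT_V})=1\otimes 1$ is the trivial case $X=\emptyset$. Assembling these four points gives that $(\bfT_V,m,\delta)$ is a twisted bialgebra of the second kind, which is exactly the claim; the author's proof can be as terse as "immediate verifications", and indeed the only place demanding a moment's care is the well-definedness on the quotient space $V^{\otimes X}$.
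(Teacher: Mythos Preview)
Your verification that $(\bfT_V,m,\delta)$ is a twisted bialgebra of the second kind is fine and matches the paper's approach in spirit. However, you have stopped one step short of what the proposition actually asserts: the claim is that $(\bfT_V,m,\Delta,\delta)$ is a \emph{double} twisted bialgebra, not merely a twisted bialgebra of the second kind. According to the definition given earlier in the paper, a double twisted bialgebra requires in addition that the first coproduct $\Delta$ and its counit $\varepsilon_\Delta$ be comodule morphisms with respect to the coaction induced by $\delta$. Concretely, you still need
\[
\rho_{X,Y}\circ \Delta_{X,Y}=(\Delta_{X,Y}\otimes \id)\circ \delta_{X\sqcup Y}
\quad\text{and}\quad
(\varepsilon_\Delta\otimes \id)\circ \delta_\emptyset(x)=\varepsilon_\Delta(x)1_{\bfT_V}.
\]
The paper's proof checks exactly these two compatibilities (in addition to the compatibility of $m$ with $\delta$ that you did verify). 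Both are straightforward in the same slotwise style you have been using: applying $\Delta_{X,Y}$ before or after $\delta$ to $\bigotimes_{z\in X\sqcup Y} v_z$ yields $\bigotimes_{x\in X}v_x'\otimes \bigotimes_{y\in Y}v_y'\otimes \bigotimes_{z\in X\sqcup Y}v_z''$ on both sides, and the counit check is trivial since $\bfT_V[\emptyset]=\K$. So the gap is not conceptual, but as written your argument does not close; your final sentence ``which is exactly the claim'' is incorrect.
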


\begin{proof}
For any finite set $X$, it is immediate that $\bfT_V[X]$ is a coalgebra, isomorphic to  the usual tensor product of $|X|$ copies of $V$.
Let $X$ and $Y$ be two finite sets. Let us prove that $m_{X,Y}$ is a comodule morphism.
\begin{align*}
&\delta_{X\sqcup Y}\circ m_{X,Y}\left(\left(\bigotimes_{x\in X}v_x\right)\otimes \left(\bigotimes_{y\in Y}v_y\right)\right)\\
&=\delta_{X\sqcup Y}\left(\bigotimes_{z\in X\sqcup Y}v_z\right)\\
&=\left(\bigotimes_{z\in X\sqcup Y}v'_z\right)\otimes\left(\bigotimes_{z\in X\sqcup Y}v''_z\right)\\
&=m_{X,Y}\left(\left(\bigotimes_{x\in X}v'_x\right)\otimes \left(\bigotimes_{y\in Y}v'_y\right)\right)
\otimes m_{X',Y'}\left(\left(\bigotimes_{x\in X}v''_x\right)\otimes \left(\bigotimes_{y\in Y}v''_y\right)\right)\\
&=m_{X,Y}\circ \rho_{X,Y}\left(\left(\bigotimes_{x\in X}v_x\right)\otimes \left(\bigotimes_{y\in Y}v_y\right)\right).
\end{align*}
Let us prove that $\Delta_{X,Y}$ is a comodule morphism.
\begin{align*}
\rho_{X,Y}\circ \Delta_{X,Y}\left(\left(\bigotimes_{z\in X\sqcup Y}v_z\right)\right)
&=\rho_{X,Y}\left(\left(\bigotimes_{x\in X}v_x\right)\otimes \left(\bigotimes_{y\in Y}v_y\right)\right)\\
&=\left(\bigotimes_{x\in X}v'_x\right)\otimes \left(\bigotimes_{y\in Y}v'_y\right)\otimes \left(\bigotimes_{z\in X\sqcup Y}v''_z\right)\\
&=(\Delta_{X,Y}\otimes \id) \left(\left(\bigotimes_{z\in X\sqcup Y}v'_z\right)\otimes \left(\bigotimes_{z\in X\sqcup Y}v''_z\right)\right)\\
&=(\Delta_{X,Y}\otimes \id)\circ \delta_{X\sqcup Y}\left(\bigotimes_{z\in X\sqcup Y}v_z\right).
\end{align*}
finally, as $\bfT_V[\emptyset]=\K$ and $\varepsilon_\Delta(1)1_{\bfT_V}=1
=(\varepsilon_\Delta\otimes\id)\circ \delta_\emptyset(1)$,
we obtain that $\varepsilon_\Delta$ is a comodule morphism. \end{proof}

\begin{example}
For $V=\K$,  we obtain again the double twisted bialgebra $\com$ of Example \ref{ex1.2},
with $\displaystyle 1_X=\bigotimes_{x\in X}1$.
\end{example}

\section{Double twisted bialgebras from species with contraction}

\subsection{Composition of species}

\begin{notation}\label{notationequivalence}
\begin{enumerate}
\item Let $X$ be a finite set. We denote by $\eq[X]$ the set of equivalences on $X$. 
if $\sim\in \eq[X]$ and $x\in X$, the equivalence class of $x$ for $\sim$ is denoted by $\cl_\sim(x)$. 
For any bijection $\sigma:X\longrightarrow Y$ between two finite sets,
we define a bijection $\eq[\sigma]:\eq[X]\longrightarrow\eq[Y]$ as follows: if  $\sim\in \eq[X]$,
then $\eq[\sigma](\sim)=\sim_\sigma$ is the equivalence on $Y$ given by
\begin{align*}
&\forall y,y'\in Y,&y\sim_\sigma y'&\Longleftrightarrow \sigma^{-1}(y)\sim \sigma^{-1}(y').
\end{align*}
This defines a set species $\eq$. Moreover, $\sigma$ induces a bijection between $X/\sim$ and $Y/\sim_\sigma$:
\[\sigma/\sim:\left\{\begin{array}{rcl}
X/\sim&\longrightarrow&Y/\sim_\sigma\\
\cl_\sim(x)&\longrightarrow&\cl_{\sim_\sigma}(\sigma(x)).
\end{array}\right.\]
\item The set $\eq[X]$ is partially ordered by the refinement order: 
\begin{align*}
&\forall \sim,\sim'\in \eq[X],&\sim\leq \sim'&\Longleftrightarrow (\forall x,y\in X,\: x\sim' y\Longrightarrow x\sim y),
\end{align*}
or equivalently, $\sim\leq \sim'$ if the classes of $\sim$ are unions of classes of $\sim'$.
\item If $\sim'\in \eq[X]$, there is a bijection
\[\left\{\begin{array}{rcl}
\eq[X/\sim']&\longrightarrow&\{\sim\in \eq[X],\:\sim\leq \sim'\}\\
\overline{\sim}&\longrightarrow&\sim,
\end{array}\right.\]
where for any $x,y\in X$, $x\sim y$ if and only if $\cl_{\sim'}(x)\overline{\sim}\cl_{\sim'}(y)$.
Moreover, the following map is a bijection:
\[\left\{\begin{array}{rcl}
(X/\sim')/\overline{\sim}&\longrightarrow&X/\sim\\
\cl_{\overline{\sim}}(\cl_{\sim'}(x))&\longrightarrow&\cl_\sim(x).
\end{array}\right.\]
From now, we identify $\eq[X/\sim']$ and  $\{\sim\in \eq[X],\:\sim\leq \sim'\}$, as well as
$(X/\sim')/\overline{\sim}$ and $X/\sim$ through these bijections.
\end{enumerate} \end{notation}

If $\bfP$ and $\bfQ$ are two species, their composition \cite{Joyal1981,Joyal1986} is the species $\bfP\circ \bfQ$ defined as follows:
\begin{itemize}
\item If $X$ is a finite set,
\[\bfP\circ \bfQ[X]=\bigoplus_{\sim\in \eq[X]} \bfP[X/\sim]\otimes \left(\bigotimes_{Y\in X/\sim} \bfQ[Y]\right).\]
\item If $\sigma:X\longrightarrow X'$ is a bijection between two finite sets,
\[\bfP\circ \bfQ[\sigma]=\bigoplus_{\sigma\in \eq[X]} \bfP[\sigma/\sim]\otimes \left(\bigotimes_{Y\in X/\sim}
\bfQ[\sigma_{\mid Y}]\right),\]
noticing that 
\[\bfP[\sigma/\sim]\otimes \left(\bigotimes_{Y\in X/\sim}\bfQ[\sigma_{\mid Y}]\right):
\bfP[X/\sim]\otimes \bigotimes_{Y\in X/\sim} \bfQ[Y]\longrightarrow 
\bfP[X'/\sim_\sigma]\otimes \left(\bigotimes_{Y'\in X'/\sim_\sigma} \bfQ[Y']\right).\]
\end{itemize}
The unit species for $\circ$ is the species $\mathbf{I}$ defined by
\[\mathbf{I}[X]=\begin{cases}
\K\mbox{ if }|X|=1,\\
0 \mbox{ otherwise}.
\end{cases}\]
Let $\bfP$, $\bfQ$ and $\bfR$ be three species. For any finite set $X$,
\begin{align*}
(\bfP\otimes \bfQ)\circ \bfR[X]&=\bigoplus_{\substack{\sim \in \eq[X],\\X/\sim=\overline{X'}\sqcup \overline{X''}}}
\bfP[\overline{X'}]\otimes \bfQ[\overline{X''}]\otimes \left(\bigotimes_{Y\in\overline{ X'}\sqcup \overline{X''}} \bfR[Y]\right)\\
&=\bigoplus_{\substack{X=X'\sqcup X'',\\ \sim'\in \eq[X'],\\ \sim''\in \eq[X'']}}
\bfP[X'/\sim']\otimes \bfQ[X''/\sim'']\otimes \left(\bigotimes_{Y'\in X'/\sim'}\bfR[Y']\right)\otimes
\left(\bigotimes_{Y''\in X''/\sim'}\bfR[Y'']\right)\\
&\approx \bigoplus_{\substack{X=X'\sqcup X'',\\ \sim'\in \eq[X'],\\ \sim''\in \eq[X'']}}
\bfP[X'/\sim']\otimes \left(\bigotimes_{Y''\in X''/\sim'}\bfR[Y'']\right)
\otimes \bfQ[X''/\sim'']\otimes \left(\bigotimes_{Y'\in X'/\sim'}\bfR[Y']\right)\\
&\approx \bigoplus_{X=X'\sqcup X''} \left(\bfP\circ\bfR[X']\right)\otimes \left(\bfQ\circ \bfR[X'']\right)\\
&\approx (\bfP\circ \bfR)\otimes (\bfQ\circ \bfR)[X].
\end{align*}
We now identify the vector spaces $(\bfP\otimes \bfQ)\circ \bfR[X]$ and $ (\bfP\circ \bfR)\otimes (\bfQ\circ \bfR)[X]$.
We can prove similarly that
for any bijection $\sigma:X\longrightarrow Y$ between two finite sets,
\[(\bfP\otimes \bfQ)\circ \bfR[\sigma]= (\bfP\circ \bfR)\otimes (\bfQ\circ \bfR)[\sigma].\] 
Hence, the species $(\bfP\otimes \bfQ)\circ \bfR$ and $ (\bfP\circ \bfR)\otimes (\bfQ\circ \bfR)$ are naturally isomorphic,
and we now identify them. For any species $\bfQ$, we obtain an endofunctor $\calF_{\circ\bfQ}$ of the symmetric monoidal
(for the Cauchy tensor product\footnote{In fact, also for the Hadamard product.}) of species, such that for any finite species $\bfP$, 
$\calF_{\circ \bfQ}[\bfP]=\bfP\circ \bfQ$.

\subsection{An endofunctor on twisted bialgebras}

Let us consider the endofunctor $\calF_{\circ \com}$. By definition of the composition of species,
if $\bfP$ is a species,
\[\calF_{\circ \com}[\bfP][X]=\bigoplus_{\sim \in \eq[X]} \bfP[X/\sim] \otimes \left(\bigotimes_{Y\in X/\sim} \K\right)
=\bigoplus_{\sim \in \eq[X]} \bfP[X/\sim].\]
If $\sigma:X\longrightarrow Y$ is a bijection between two finite sets, then
\[\calF_{\circ \com}[\bfP][\sigma]:\bigoplus_{\sim\in \eq[X]}\bfP[X/\sim]\longrightarrow
\bigoplus_{\sim\in \eq[Y]}\bfQ[Y/\sim]\]
is defined by $\calF_{\circ \com}[\bfP][\sigma]_{\mid \bfP[X/\sim]}=\bfP[\sigma/\sim]$, which takes its values
in $\bfQ[Y/\sim_\sigma]$. 
If $f:\bfP\longrightarrow \bfQ$ is a morphism of species, then $\calF_{\circ \com}[f]:\bfP\circ \com\longrightarrow
\bfQ\circ \com$ is defined as follows: for any finite set $X$, for any $\sim\in \eq[X]$,
\[\calF_{\circ \com}[f]_{\mid \bfP[X/\sim]}=f[X/\sim]:\bfP[X/\sim]\subseteq \bfP\circ \com[X]
\longrightarrow \bfQ[X/\sim]\subseteq \bfQ\circ \com[X].\]
As the endofunctor $\calF_{\circ \com}$ is compatible with the Cauchy tensor product $\otimes$:

\begin{prop} \label{propendofunctor}
Let $\bfP$ be a species and $\bfP'=\bfP\circ \com$.
\begin{enumerate}
\item If $(\bfP,m)$ is a twisted algebra, then $\bfP'$ is also a twisted bialgebra, with the product defined as follows:
for any finite sets $X$, $Y$, for any $\sim_X\in \eq[X]$ and $\sim_Y\in \eq[Y]$,
\[(m'_{X,Y})_{\mid \bfP[X/\sim_X]\otimes \bfP[Y/\sim_Y]}=m_{X/\sim_X,Y/\sim_Y},\]
which takes its values in 
\[\bfP[X/\sim_X\sqcup Y/\sim_Y]=\bfP[(X\sqcup Y)/(\sim_X\sqcup \sim_Y)]\subseteq \bfP'[X\sqcup Y].\]
The unit is $1_\bfP'=1_\bfP\in \bfP[\emptyset]=\bfP'[\emptyset]$. 
\item If $(\bfP,\Delta)$ is a twisted coalgebra, then $\bfP'$ is also a twisted coalgebra, with the product defined as follows:
for any finite sets $X$ and $Y$, for any $\sim\in \eq[X\sqcup Y]$, putting $\sim_X=\sim\cap X^2$
and $\sim_Y=\sim\cap Y^2$,
\[(\Delta'_{X,Y})_{\mid \bfP[(X\sqcup Y)/\sim]}
=\begin{cases}
\Delta_{X/\sim_X,Y/\sim_Y}\mbox{ if }\sim=\sim_X\sqcup \sim_Y,\\
0\mbox{ otherwise},
\end{cases}\]
which takes its values in $\bfP[X/\sim_X]\otimes \bfP[Y/\sim_Y]\subseteq \bfP'[X]\otimes \bfP'[Y]$.
The counit of $(\bfP',\Delta')$  is the counit of $(\bfP,\Delta)$.
\item If $\bfP$ is a twisted bialgebra, these product and coproduct make $\bfP'$ a twisted bialgebra.
\end{enumerate}
\end{prop}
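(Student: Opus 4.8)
The plan is to obtain the three items simultaneously from the observation, essentially already signalled above, that $\calF_{\circ \com}$ is a \emph{strong symmetric monoidal} endofunctor of the category of species with the Cauchy tensor product. The natural isomorphisms $(\bfP\otimes\bfQ)\circ\com\cong(\bfP\circ\com)\otimes(\bfQ\circ\com)$ established above supply its lax (and, being isomorphisms, oplax) structure, and $\bfI\circ\com\cong\bfI$ because $\bfI\circ\com[X]=\bigoplus_{\sim\in\eq[X]}\bfI[X/\sim]$ equals $\K$ for $X=\emptyset$ and $0$ otherwise. First I would check that this data satisfies the coherence axioms of a symmetric monoidal functor — the associativity pentagon, the unit triangles, and compatibility with the flips $c$ — each of which unwinds to a routine comparison of reindexings of direct sums over equivalence relations. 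Granting this, a strong symmetric monoidal functor carries monoids to monoids, comonoids to comonoids and bimonoids to bimonoids, preserving commutativity and cocommutativity; applying $\calF_{\circ \com}$ to $(\bfP,m)$, to $(\bfP,\Delta)$ and to $(\bfP,m,\Delta)$ then gives items $1$, $2$ and $3$ respectively.

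It remains to identify the transported structure maps with the explicit formulas in the statement, and this is the point needing care. Under the isomorphism $(\bfP\circ\com)\otimes(\bfP\circ\com)\cong(\bfP\otimes\bfP)\circ\com$, the summand $\bfP[X/\sim_X]\otimes\bfP[Y/\sim_Y]$ corresponds to the summand of $(\bfP\otimes\bfP)\circ\com[X\sqcup Y]$ indexed by $\sim_X\sqcup\sim_Y\in\eq[X\sqcup Y]$, with quotient set split as $X/\sim_X\sqcup Y/\sim_Y=(X\sqcup Y)/(\sim_X\sqcup\sim_Y)$; hence the transported product restricted there is the component $m_{X/\sim_X,\,Y/\sim_Y}$ of $m$, which is exactly $m'_{X,Y}$. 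Dually, on the $\sim$-component $\bfP[(X\sqcup Y)/\sim]$ of $\bfP'[X\sqcup Y]$ the transported coproduct is $\Delta[(X\sqcup Y)/\sim]$ followed by the same reindexing; its component valued in $\bfP'[X]\otimes\bfP'[Y]$ is nonzero precisely when the partition of $(X\sqcup Y)/\sim$ into ``classes inside $X$'' and ``classes inside $Y$'' is legitimate, i.e.\ when no $\sim$-class meets both $X$ and $Y$, that is $\sim=(\sim\cap X^2)\sqcup(\sim\cap Y^2)$, and then it equals $\Delta_{X/(\sim\cap X^2),\,Y/(\sim\cap Y^2)}$, exactly $\Delta'_{X,Y}$. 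The counit of $(\bfP',\Delta')$ transports to that of $(\bfP,\Delta)$ on $\bfP'[\emptyset]=\bfP[\emptyset]$, and the unit is $1_{\bfP'}=1_\bfP$, as claimed.

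I expect the main obstacle to be purely bookkeeping: keeping straight the canonical identifications of Notation~\ref{notationequivalence} (notably $X/\sim_X\sqcup Y/\sim_Y=(X\sqcup Y)/(\sim_X\sqcup\sim_Y)$ and $(X/\sim')/\overline{\sim}=X/\sim$) and tracking which equivalence relations on a disjoint union ``decompose'' — it is exactly that condition which produces the vanishing clause in the definition of $\Delta'$. If one prefers to avoid the monoidal-functor formalism, the same proof can be carried out by checking the associativity, coassociativity, (co)unit and bialgebra diagrams for $(\bfP',m',\Delta')$ directly: on each summand indexed by equivalence relations, every such diagram collapses, after the above identifications, onto the corresponding diagram for $\bfP$. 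In that direct route item $3$ is the heaviest, since one must reproduce the product–coproduct compatibility pentagon; the key sub-check there is that $\sim_{I'}\sqcup\sim_{J'}$ decomposes along a second splitting $I\sqcup J$ of the underlying set exactly when each $\sim_{I'}$-class and each $\sim_{J'}$-class lies entirely within $I$ or entirely within $J$, after which it again reduces to the compatibility for $\bfP$.
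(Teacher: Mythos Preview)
Your proposal is correct and takes essentially the same approach as the paper: both argue that $\calF_{\circ\com}$ is a (strong) symmetric monoidal endofunctor for the Cauchy tensor product, hence preserves monoids, comonoids and bimonoids, so that $m'=\calF_{\circ\com}[m]$, $\Delta'=\calF_{\circ\com}[\Delta]$, etc. Your write-up is in fact more detailed than the paper's, which dispatches each item in one sentence by appealing to the compatibility of $\calF_{\circ\com}$ with $\otimes$ and leaves the explicit identification of the transported structure maps implicit.
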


\begin{proof}
\begin{enumerate}
\item By construction, $m'=\calF_{\circ \com}[m]$ and $\iota_\bfP'=\calF_{\circ \com}[\iota_\bfP]$.
As $\calF_{\circ \com}$ is an endofunctor compatible with the Cauchy tensor product, 
$(\bfP',m')$ is a twisted algebra, of unit $\iota_\bfP'$.
\item By construction, $\Delta'=\calF_{\circ \com}[\Delta]$ and $\varepsilon_\Delta=\calF_{\circ \com}[\varepsilon_\Delta]$.
As $\calF_{\circ \com}$ is an endofunctor compatible with the Cauchy tensor product, 
$(\bfP',\Delta')$ is a twisted algebra, of counit $\varepsilon_\Delta$.
\item Same arguments. \qedhere
\end{enumerate}\end{proof}

\begin{remark}
If $\bfP$ is a twisted algebra (resp. coalgebra, bialgebra)
then $\bfP$ is a twisted subalgebra (resp. subcoalgebra, subbialgebra) of $\bfP\circ \com$.
\end{remark}

\subsection{Species with  contractions and extractions}

\begin{defi} \label{deficontraction}
Let $\bfP$ be a species. A contraction-extraction coproduct on $\bfP$ is a family $\delta$ of maps such that:
\begin{itemize}
\item For any finite set $X$, for any $\sim\in \eq[X]$, $\delta_\sim:\bfP[X]\longrightarrow \bfP[X/\sim]\otimes \bfP[X]$.
\item For any bijection $\sigma:X\longrightarrow Y$ between two finite sets, for any $\sim\in \eq[X]$,
the following diagram commutes:
\[\xymatrix{\bfP[X]\ar[r]^<(.2){\delta_\sim}\ar[d]_{\bfP[\sigma]}&\bfP[X/\sim]\otimes \bfP[X]
\ar[d]^{\bfP[\sigma/\sim]\otimes \bfP[\sigma]}\\
\bfP[Y]\ar[r]_<(.2){\delta_{\sim_\sigma}}&\bfP[Y/\sim_\sigma]\otimes \bfP[Y]}\]
\item If $X$ is a finite set and $\sim\leqslant \sim'\in \eq[X]$, the following diagram commutes,
with Notation \ref{notationequivalence}:
\[\xymatrix{\bfP[X]\ar[r]^<(.3){\delta_{\sim'}} \ar[d]_{\delta_\sim}&\bfP[X/\sim']\otimes \bfP[X]\ar[d]^{\delta_\sim\otimes \id}\\
\bfP[X/\sim]\otimes \bfP[X]\ar[r]_<(0.15){\id \otimes \delta_{\sim'}}&\bfP[X/\sim]\otimes \bfP[X/\sim']\otimes \bfP[X]}\]
If $\sim,\sim'\in \eq[X]$, such that we do not have $\sim\leq \sim'$, then $(\id \otimes \delta_{\sim'})\circ \delta_\sim=0$.
\item There exists a species morphism $\epsilon_\delta:\bfP\longrightarrow \com$ such that for any finite set $X$,
for any $\sim\in \eq[X]$,
\[(\id \otimes \epsilon_\delta[X])\circ \delta_\sim=\begin{cases}
\id_{\bfP[X]}\mbox{ if $\sim$ is the equality of $X$},\\
0\mbox{ otherwise},
\end{cases}\]
and
\[\sum_{\sim \in \eq[X]} (\epsilon_\delta[X/\sim]\otimes \id)\circ \delta_\sim=\id_{\bfP[X]}.\]
\end{itemize}
\end{defi}

\begin{prop}
Let $\bfP$ be a species with a contraction-extraction coproduct $\delta$. We put $\bfP'=\bfP\circ \com$.
For any finite set $X$, we define a coproduct
\[\delta'[X]:\bfP'[X]=\bigoplus_{\sim\in \eq[X]}\bfP[X/\sim]\longrightarrow \bfP'[X]\otimes \bfP'[X]\]
by
\[\delta'[X]_{\mid \bfP[X/\sim']}=\bigoplus_{\sim\leq \sim'}\delta_{\sim}:
\bfP[X/\sim']\longrightarrow \bigoplus_{\sim\leqslant \sim'} \bfP[X/\sim]\otimes \bfP[X/\sim']
\subseteq \bfP'[X]\otimes \bfP'[X].\]
Then $\bfP'$ is a coalgebra in the category of coalgebraic species.
\end{prop}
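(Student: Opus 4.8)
The plan is to work componentwise on the decomposition $\bfP'[X]=\bigoplus_{\sim\in\eq[X]}\bfP[X/\sim]$ and to check four things: that $\delta'$ is well defined, that it is natural in $X$, that the map $\epsilon'[X]:=\bigoplus_{\sim\in\eq[X]}\epsilon_\delta[X/\sim]:\bfP'[X]\longrightarrow\K$ is a counit for $\delta'[X]$, and that $\delta'[X]$ is coassociative; together these constitute the statement that $\bfP'$ is a coalgebra in the category of coalgebraic species. I would fix once and for all the identifications of Notation \ref{notationequivalence}: for $\sim'\in\eq[X]$ and a refinement $\sim\leq\sim'$, write $\overline{\sim}\in\eq[X/\sim']$ for the corresponding equivalence, so that $(X/\sim')/\overline{\sim}=X/\sim$ and $\delta'[X]_{\mid\bfP[X/\sim']}=\bigoplus_{\sim\leq\sim'}\delta_{\overline{\sim}}$. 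Since each $\bfP[X/\sim]$ occurring here is one of the direct summands of $\bfP'[X]$, the codomain is contained in $\bfP'[X]\otimes\bfP'[X]$, so $\delta'$ is well defined; moreover, under these identifications the restriction of $\delta'[X]$ to $\bigoplus_{\sim\leq\rho}\bfP[X/\sim]$ is simply $\delta'$ for the ground set $X/\rho$, so it suffices to verify coassociativity and the counit identities on the summand $\bfP[X]$ indexed by the equality relation, where $\delta'[X]$ restricts to $\bigoplus_{\sim\in\eq[X]}\delta_{\sim}$. Naturality is then immediate: restricted to $\bfP[X/\sim']$, the naturality square for $\delta'$ is the second commuting diagram of Definition \ref{deficontraction} applied to $\sigma/\sim'$ (using that $\sigma$ respects the refinement order and is compatible with the identifications), and $\epsilon'$ is natural because $\epsilon_\delta:\bfP\longrightarrow\com$ is a species morphism.

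For the counit I would take $x\in\bfP[X]$, so $\delta'[X](x)=\sum_{\sim\in\eq[X]}\delta_{\sim}(x)$ with $\delta_\sim(x)\in\bfP[X/\sim]\otimes\bfP[X]$. Applying $\epsilon'$ to the left-hand tensor factor yields $\sum_{\sim\in\eq[X]}(\epsilon_\delta[X/\sim]\otimes\id)\circ\delta_\sim(x)$, which equals $x$ by the last identity of Definition \ref{deficontraction}; applying $\epsilon'$ to the right-hand factor yields $\sum_{\sim\in\eq[X]}(\id\otimes\epsilon_\delta[X])\circ\delta_\sim(x)$, and by the penultimate identity of Definition \ref{deficontraction} every summand vanishes except the one indexed by the equality relation, which contributes $x$. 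Hence $(\epsilon'\otimes\id)\circ\delta'=(\id\otimes\epsilon')\circ\delta'=\id$.

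For coassociativity, again with $x\in\bfP[X]$: expanding via the restriction formula, $(\delta'[X]\otimes\id)\circ\delta'[X](x)$, with the outer $\delta'$ applied to the left factor $\bfP[X/\sim']$, equals $\sum_{\sim\leq\sim'}(\delta_{\overline{\sim}}\otimes\id_{\bfP[X]})\circ\delta_{\sim'}(x)$, whereas $(\id\otimes\delta'[X])\circ\delta'[X](x)$, with the outer $\delta'$ applied to the right factor $\bfP[X]$, equals $\sum_{\sim,\sim'\in\eq[X]}(\id_{\bfP[X/\sim]}\otimes\delta_{\sim'})\circ\delta_{\sim}(x)$. By the vanishing clause of Definition \ref{deficontraction} the terms with $\sim\not\leq\sim'$ are zero, so this second sum reduces to $\sum_{\sim\leq\sim'}(\id_{\bfP[X/\sim]}\otimes\delta_{\sim'})\circ\delta_{\sim}(x)$; and for each pair $\sim\leq\sim'$ the coassociativity diagram of Definition \ref{deficontraction} gives $(\delta_{\overline{\sim}}\otimes\id)\circ\delta_{\sim'}=(\id\otimes\delta_{\sim'})\circ\delta_{\sim}$, the common value lying in the summand $\bfP[X/\sim]\otimes\bfP[X/\sim']\otimes\bfP[X]$ of $\bfP'[X]^{\otimes3}$. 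The two sums therefore agree term by term, hence globally, and $\delta'$ is coassociative.

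The only genuinely delicate point will be the bookkeeping: keeping the nested quotients $X/\sim'$ and $X/\sim$ coherent under the identifications of Notation \ref{notationequivalence}, and observing that the two index sets in the coassociativity identity coincide precisely because of the clause "$\sim\not\leq\sim'$ implies $(\id\otimes\delta_{\sim'})\circ\delta_{\sim}=0$" — this is exactly what discards the spurious terms produced when $\delta'$ is reapplied to the right-hand tensor factor. Apart from that, the verification is a direct componentwise transcription of the axioms of a contraction-extraction coproduct.
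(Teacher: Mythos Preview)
Your proof is correct and follows essentially the same approach as the paper: both verify naturality, coassociativity, and the counit axioms componentwise, using the vanishing clause of Definition \ref{deficontraction} to trim the index set in the coassociativity computation and then matching terms via the coassociativity diagram. Your explicit reduction to the equality summand $\bfP[X]\subseteq\bfP'[X]$ is a mild cosmetic simplification---the paper instead computes directly on a general summand $\bfP[X/\sim']$, but since this is just the equality summand for the ground set $X/\sim'$, the two computations are literally the same up to renaming; likewise your explicit definition of the counit $\epsilon'[X]=\bigoplus_{\sim}\epsilon_\delta[X/\sim]$ makes precise what the paper leaves implicit in its notation.
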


\begin{proof}
Let us first prove that $\delta':\bfP'\longrightarrow\bfP'\otimes \bfP'$ is a species morphism.
Let $\sigma:X\longrightarrow Y$ be a bijection between two finite sets. For any $\sim'\in \eq[X]$,
\begin{align*}
\delta'[Y]\circ \bfP'[\sigma]_{\mid \bfP[X/\sim']}&=\bigoplus_{\sim\leq \sim'}\delta_{\sim_{\sigma}}\circ \bfP[\sigma/\sim']\\
&=\bigoplus_{\sim\leq \sim'}(\bfP[\sigma/\sim]\otimes \bfP[\sigma/\sim'])\circ \delta_\sigma\\
&=(\bfP'[\sigma]\otimes \bfP'[\sigma])\circ \delta'[X]_{\mid \bfP[X/\sim']}.
\end{align*}
Let us now prove that $\delta'[X]$ is coassociative. Let $X$ be a finite set and $\sim''\in \eq[X]$.
\begin{align*}
(\delta'[X]\otimes \id)\circ \delta'[X]_{\mid \bfP[X/\sim'']}&=\bigoplus_{\sim'\leq \sim''} 
(\delta[X] \otimes \id)\circ \delta_{\sim'}\\
&=\bigoplus_{\sim\leq \sim'\leq \sim''}(\delta_\sim\otimes\id)\circ \delta_{\sim'}\\
&=\bigoplus_{\sim\leq \sim'\leq\sim''}(\id \otimes \delta_{\sim'})\circ \delta_{\sim}\\
&=\bigoplus_{\sim,\sim'\leq\sim''}(\id \otimes \delta_{\sim'})\circ \delta_{\sim}\\
&=\bigoplus_{\sim'\leq \sim''}(\id \otimes \delta_{\sim'})\circ \delta[X]_{\mid \bfP[X/\sim'']}\\
&=(\id\otimes \delta'[X])\circ \delta'[X]_{\mid \bfP[X/\sim'']}.
\end{align*}
For the fourth equality, recall that $(\id \otimes \delta_{\sim'})\circ \delta_{\sim}=0$ if we do not have $\sim\leq \sim'$. \\

Let us finally prove that $\epsilon_\delta$ is a counit. Let $X$ be a finite set and $\sim'\in \eq[X]$.
\begin{align*}
(\epsilon_\delta[X]\otimes \id)\circ \delta'[X]_{\mid \bfP[X/\sim']}
&=\sum_{\sim\leq \sim'} (\epsilon_\delta[X]\otimes \id)\circ \delta_\sim\\
&=\sum_{\sim\in \eq[X/\sim']} (\epsilon_\delta[X]\otimes \id)\circ \delta_\sim\\
&=\id_{\bfP[X/\sim']},\\
(\id \otimes \varepsilon_\delta[X])\circ \delta'[X]_{\mid \bfP[X/\sim']}
&=\sum_{\sim\in \eq[X/\sim']} (\id \otimes \epsilon_\delta[X])\circ \delta_\sim\\
&=(\id \otimes \epsilon_\delta[X])\circ \delta_{=_{\bfP[X/\sim']}}+0\\
&=\id_{\bfP[X/\sim']}. 
\end{align*}
If $\sigma:X\longrightarrow Y$ is a bijection between two finite sets, the compatibility of $\delta$ with $\bfP[\sigma]$
implies that $\bfP[\sigma]$ is a coalgebra isomorphism from $(\bfP[X],\delta_X)$ to $(\bfP[Y],\delta_Y)$.
So $\bfP$ is a coalgebraic species. \end{proof}

\begin{prop} \label{propalgebra}
Let $(\bfP,m)$ be a twisted algebra with a contraction-extraction coproduct $\delta$. 
We assume that for any finite set $X$, for any $\sim\in \eq[X]$, putting $\sim_X=\sim\cap X^2$ and $\sim_Y=\sim\cap Y^2$,
\begin{align*}
\delta_\sim\circ m_{X,Y}&=\begin{cases}
(m_{X/\sim_X,Y/\sim_Y}\otimes m_{X,Y})\circ (\id \otimes c \otimes \id)\circ (\delta_{\sim_X}\otimes \delta_{\sim_Y})
\mbox{ if }\sim=\sim_X\sqcup \sim_Y,\\
0\mbox{ otherwise},
\end{cases}
\end{align*}
and that $\delta_{\sim_\emptyset}(1_\bfP)=1_\bfP\otimes 1_\bfP$, where $\sim_\emptyset$ is the unique equivalence on $\emptyset$.
We put $\bfP'=\bfP\circ \com$ and we give it its structure of twisted algebra $(\bfP',m')$ and its coproduct $\delta'$ 
associated to the contraction-extraction coproduct. Then $(\bfP',m')$ is an algebra in the category of  coalgebraic species.
\end{prop}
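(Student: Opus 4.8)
The plan is to show that $(\bfP',m')$ is an algebra in the category of coalgebraic species, i.e.\ that $m':\bfP'\otimes \bfP'\longrightarrow \bfP'$ is a morphism of coalgebraic species (equivalently, a morphism of coalgebras at each finite set) and that the unit $\iota_{\bfP'}$ is likewise. Since we already know $(\bfP',m')$ is a twisted algebra and $(\bfP',\delta')$ is a coalgebra in coalgebraic species, the only genuinely new thing to check is the compatibility
\[
\delta'[X\sqcup Y]\circ m'_{X,Y}
=(m'_{X,Y}\otimes m'_{X,Y})\circ(\id\otimes c\otimes \id)\circ(\delta'[X]\otimes \delta'[Y]),
\]
together with $\delta'_\emptyset(1_{\bfP'})=1_{\bfP'}\otimes 1_{\bfP'}$. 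The latter is immediate from the hypothesis $\delta_{\sim_\emptyset}(1_\bfP)=1_\bfP\otimes 1_\bfP$ and the identification $1_{\bfP'}=1_\bfP$.

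First I would unwind both sides of the compatibility on a homogeneous summand. Fix finite sets $X,Y$, fix $\sim_X'\in\eq[X]$ and $\sim_Y'\in\eq[Y]$, and let $\sim'=\sim_X'\sqcup\sim_Y'\in\eq[X\sqcup Y]$, which is the equivalence class index under which $\bfP[X/\sim_X']\otimes\bfP[Y/\sim_Y']$ sits inside $\bfP'[X\sqcup Y]$ after applying $m'$. By definition $m'_{X,Y}$ restricted to this summand is $m_{X/\sim_X',Y/\sim_Y'}$, landing in $\bfP[(X\sqcup Y)/\sim']$. Then $\delta'[X\sqcup Y]$ on that summand is $\bigoplus_{\sim\le\sim'}\delta_\sim$. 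Every $\sim\le\sim'$ on $X\sqcup Y$ has the form $\sim=\sim_X\sqcup\sim_Y$ with $\sim_X=\sim\cap X^2\le\sim_X'$ and $\sim_Y=\sim\cap Y^2\le\sim_Y'$ — this is precisely the point where I use that $\sim'$ itself already separates $X$ from $Y$, so no class of $\sim'$ (hence no class of $\sim$) meets both $X$ and $Y$. Now apply the product-coproduct compatibility hypothesis of the proposition to each $\delta_\sim\circ m_{X/\sim_X',Y/\sim_Y'}$: since $\sim=\sim_X\sqcup\sim_Y$ we are in the nonzero branch, and we get
\[
(m_{(X/\sim_X')/\sim_X,\,(Y/\sim_Y')/\sim_Y}\otimes m_{X/\sim_X',Y/\sim_Y'})\circ(\id\otimes c\otimes\id)\circ(\delta_{\sim_X}\otimes\delta_{\sim_Y}),
\]
where I have used the identification $(X/\sim_X')/\sim_X = X/\sim_X$ from Notation \ref{notationequivalence}. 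Summing over all $\sim_X\le\sim_X'$ and $\sim_Y\le\sim_Y'$ independently reassembles exactly $(\delta'[X]\otimes\delta'[Y])$ on the left, the middle flip, and $(m'_{X,Y}\otimes m'_{X,Y})$ on the outside, which is the right-hand side of the desired identity. I should also separately note that when $m'_{X,Y}$ is applied and then $\delta_\sim$ with $\sim$ \emph{not} of the form $\sim_X\sqcup\sim_Y$: such $\sim$ cannot be $\le\sim'$ (again because $\sim'$ separates $X$ and $Y$), so these terms do not appear in $\delta'[X\sqcup Y]$ anyway, which is why the "otherwise $0$" clause of the hypothesis never interferes here.

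The naturality (species-morphism) property of $m'$ with respect to bijections and its compatibility with the coalgebra structures $\delta_X$ at each fixed set are already contained in Proposition \ref{propendofunctor} and in the hypothesis that each $\bfP[\sigma]$ is a coalgebra map, so I would only recall them briefly. The main obstacle — really the only subtlety — is bookkeeping the identifications $\eq[X/\sim']\cong\{\sim\le\sim'\}$ and $(X/\sim')/\sim\cong X/\sim$ carefully, and checking that the decomposition $\{\sim\le\sim_X'\sqcup\sim_Y'\}=\{\sim_X\le\sim_X'\}\times\{\sim_Y\le\sim_Y'\}$ is a bijection of index sets; once that is in place the computation is a direct matching of summands using the hypothesis of the proposition, with no further input needed.
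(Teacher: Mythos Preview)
Your overall structure matches the paper's proof, but there is a genuine error in the combinatorial bookkeeping. You claim that every $\sim\le\sim'$ on $X\sqcup Y$ automatically has the form $\sim=\sim_X\sqcup\sim_Y$, arguing that ``no class of $\sim'$ (hence no class of $\sim$) meets both $X$ and $Y$.'' This inference is backwards: in the paper's convention, $\sim\le\sim'$ means $\sim$ is \emph{coarser} than $\sim'$ (its classes are unions of classes of $\sim'$), so a class of $\sim$ can perfectly well merge a $\sim'$-class contained in $X$ with one contained in $Y$. For a minimal example, take $X=\{1\}$, $Y=\{2\}$, $\sim'_X,\sim'_Y$ the equalities, so $\sim'$ is the equality on $\{1,2\}$; then the equivalence $\sim$ with the single class $\{1,2\}$ satisfies $\sim\le\sim'$ but is not of the form $\sim_X\sqcup\sim_Y$.

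Consequently your later remark that ``the `otherwise $0$' clause of the hypothesis never interferes here'' is also wrong: that clause is exactly what kills the contributions of those $\sim\le\sim'$ that do not split. The paper's proof handles this correctly: it sums over all $\sim\le\sim'$, observes that terms with $\sim\neq\sim_X\sqcup\sim_Y$ vanish by the hypothesis $\delta_\sim\circ m_{X/\sim'_X,Y/\sim'_Y}=0$, and only then reindexes the surviving terms by pairs $(\sim_X,\sim_Y)$ with $\sim_X\le\sim'_X$, $\sim_Y\le\sim'_Y$. Once you make this correction your argument coincides with the paper's.
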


\begin{proof}
Let us firstly prove that the unit of $\bfP'$ is a coalgebra morphism.
\begin{align*}
\delta'[\emptyset](1_\bfP')&=\delta'[\emptyset](1_\bfP)=1_\bfP\otimes 1_\bfP=1_\bfP'\otimes 1_\bfP'.
\end{align*}
Let us now prove that the product $m'_{X,Y}$ is a coalgebra morphism from $\bfP'[X]\otimes \bfP'[Y]$ to $\bfP'[X\sqcup Y]$
for any finite sets $X$ and $Y$. Let $\sim'_X\in \eq[X]$ and $\sim'_Y\in \eq[Y]$. 
We put $\sim'=\sim'_X\sqcup \sim'_Y\in \eq[X\sqcup Y]$. then
\begin{align*}
(\delta'[X\sqcup Y]\circ m'_{X,Y})_{\mid \bfP[X/\sim'_X]\otimes \bfP[Y/\sim'_Y]}
&=\delta'[X\sqcup Y]\circ m_{X/\sim'_X,Y/\sim'_Y}\\
&=\sum_{\sim\leq \sim'_X\sqcup \sim'_Y} \delta_\sim\circ m_{X/\sim'_X,Y/\sim'_Y}.
\end{align*}
If $\sim\leq \sim'_X\sqcup \sim'_Y$, let us put $\sim_X=\sim\cap X^2$ and $\sim_Y=\sim\cap Y^2$.
If $\sim\neq \sim_X\sqcup \sim_Y$, then $\delta_\sim\circ m_{X/\sim'_X,Y/\sim'_Y}=0$.
Moreover, $\sim_X\leq \sim'_X$ and $\sim_Y\leq \sim'_Y$; conversely, if $\sim_X\leq \sim'_X$ and $\sim_Y\leq \sim'_Y$,
then $\sim_X\sqcup \sim_Y\leq \sim$. We obtain
\begin{align*}
&(\delta'[X\sqcup Y]\circ m'_{X,Y})_{\mid \bfP[X/\sim'_X]\otimes \bfP[Y/\sim'_Y]}\\
&=\sum_{\substack{\sim_X\leq \sim'_X,\\ \sim_Y\leq \sim'_Y}} \delta_{\sim_X\sqcup \sim_Y}\circ m_{X/\sim'_X,Y/\sim'_Y}\\
&=\sum_{\substack{\sim_X\leq \sim'_X,\\ \sim_Y\leq \sim'_Y}} (m_{X/\sim_X,Y/\sim_Y}\otimes m_{X/\sim'_X,Y/\sim'_Y})
\circ (\id \otimes c \otimes \id)\circ (\delta_{\sim_X}\otimes \delta_{\sim_Y})\\
&=(m'_{X,Y}\otimes m'_{X,Y})\circ (\id \otimes c \otimes \id)\circ 
(\delta'[X]\otimes \delta'[Y])_{\mid \bfP[X/\sim'_X]\otimes \bfP[Y/\sim'_Y]}. \qedhere
\end{align*}
\end{proof}

\begin{prop}\label{propbialgebra}
Let $(\bfP,m,\Delta)$ be a twisted bialgebra of unit $1_\bfP$
and of counit $\varepsilon_\Delta$, with a contraction-extraction coproduct $\delta$.  
We assume that for any finite set $X\sqcup Y$, for any $\sim\in \eq[X\sqcup Y]$, putting $\sim_X=\sim\cap X^2$ 
and $\sim_Y=\sim\cap Y^2$,
\begin{align*}
\delta_\sim\circ m_{X,Y}&=\begin{cases}
(m_{X/\sim_X,Y/\sim_Y}\otimes m_{X,Y})\circ (\id \otimes c \otimes \id)\circ (\delta_{\sim_X}\otimes \delta_{\sim_Y})\\
\hspace{1cm}
\mbox{ if }\sim=\sim_X\sqcup \sim_Y,\\
0\mbox{ otherwise},
\end{cases}\\
(\Delta_{X/\sim_X,Y/\sim_Y}\otimes \id)\circ \delta_\sim&=
m_{1,3,24}\circ (\delta_{\sim_X}\otimes \delta_{\sim_Y})\circ \Delta_{X,Y}\mbox{ if }\sim=\sim_X\sqcup \sim_Y,\\
\delta_{\sim_\emptyset}(1_\bfP)&=1_\bfP\otimes 1_\bfP,\\
(\varepsilon_\Delta\otimes \id)\circ \delta_{\sim_\emptyset}&=\eta_\bfP\circ \varepsilon_\Delta(x).
\end{align*}
Let $\bfP'=\bfP\circ \com$, with its structure of twisted bialgebra $(\bfP',m',\Delta')$ and its coproduct $\delta'$ 
associated to the contraction-extraction coproduct. Then $(\bfP',m,\Delta,\delta)$ is a double twisted bialgebra.
\end{prop}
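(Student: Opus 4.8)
The plan is to verify the clauses in the definition of a double twisted bialgebra for $(\bfP',m',\Delta',\delta')$ one by one. That $(\bfP',m',\Delta')$ is a twisted bialgebra is Proposition~\ref{propendofunctor}(3). The first displayed hypothesis is exactly the hypothesis of Proposition~\ref{propalgebra}; together with $\delta_{\sim_\emptyset}(1_\bfP)=1_\bfP\otimes 1_\bfP$ and the Proposition just above it (which makes $\bfP'$, with the coproducts $\delta'[X]$, a coalgebra in the category of coalgebraic species), Proposition~\ref{propalgebra} gives that $(\bfP',m',\delta')$ is a twisted bialgebra of the second kind. So all that is left is to check that $\Delta'$ and $\varepsilon_\Delta$ are comodule morphisms. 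For $\varepsilon_\Delta$ this is immediate: the only equivalence on $\emptyset$ is $\sim_\emptyset$, so $\bfP'[\emptyset]=\bfP[\emptyset]$ and $\delta'[\emptyset]=\delta_{\sim_\emptyset}$, and the identity $(\varepsilon_\Delta\otimes\id)\circ\delta'[\emptyset]=\varepsilon_\Delta(-)\,1_{\bfP'}$ is the last displayed hypothesis.

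The heart of the proof is the identity $\rho'_{X,Y}\circ\Delta'_{X,Y}=(\Delta'_{X,Y}\otimes\id)\circ\delta'[X\sqcup Y]$, where $\rho'_{X,Y}=m'_{1,3,24}\circ(\delta'[X]\otimes\delta'[Y])$ is the coaction for the second-kind structure of $\bfP'$. I would prove it summand by summand, examining the restriction of both sides to $\bfP[(X\sqcup Y)/\sim']$ for each $\sim'\in\eq[X\sqcup Y]$. Two combinatorial remarks are needed. First, by Proposition~\ref{propendofunctor}(2), $\Delta'_{X,Y}$ vanishes on the summand $\bfP[(X\sqcup Y)/\sim]$ unless $\sim$ ``splits'', i.e. $\sim=(\sim\cap X^2)\sqcup(\sim\cap Y^2)$. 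Second, if $\sim'$ does not split then no $\sim\le\sim'$ splits either (a $\sim'$-class meeting both $X$ and $Y$ lies in a single $\sim$-class, which then meets both), whereas if $\sim'=\sim'_X\sqcup\sim'_Y$ splits then, under the identifications of Notation~\ref{notationequivalence}, $(X\sqcup Y)/\sim'=(X/\sim'_X)\sqcup(Y/\sim'_Y)$ and the $\sim\le\sim'$ that split are exactly the $\sim_X\sqcup\sim_Y$ with $\sim_X\le\sim'_X$ and $\sim_Y\le\sim'_Y$.

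On a non-splitting summand both sides vanish, by the first remark for the left-hand side and by both remarks for the right-hand side. On a splitting summand $\bfP[(X\sqcup Y)/\sim']$ with $\sim'=\sim'_X\sqcup\sim'_Y$, I would expand the left-hand side using that $\Delta'_{X,Y}$ restricts to $\Delta_{X/\sim'_X,Y/\sim'_Y}$ there, and then unfold $\rho'_{X,Y}$ and the definition of $\delta'$ on $\bfP[X/\sim'_X]$ and $\bfP[Y/\sim'_Y]$: this yields a sum, over pairs $\sim_X\le\sim'_X$ and $\sim_Y\le\sim'_Y$, of $(\id\otimes\id\otimes m_{X/\sim'_X,Y/\sim'_Y})\circ(\id\otimes c\otimes\id)\circ(\delta_{\sim_X}\otimes\delta_{\sim_Y})\circ\Delta_{X/\sim'_X,Y/\sim'_Y}$. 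Expanding the right-hand side with the definition of $\delta'[X\sqcup Y]$ and the two remarks yields a sum over the same index set of $(\Delta_{X/\sim_X,Y/\sim_Y}\otimes\id)\circ\delta_{\sim_X\sqcup\sim_Y}$. These agree term by term by the second displayed hypothesis applied with $X/\sim'_X,Y/\sim'_Y$ in place of $X,Y$, once one writes $m_{1,3,24}=(\id\otimes\id\otimes m)\circ(\id\otimes c\otimes\id)$ and uses $(X/\sim'_X)/\sim_X=X/\sim_X$ and $(Y/\sim'_Y)/\sim_Y=Y/\sim_Y$. Hence the identity holds on every summand, so $\Delta'$ is a comodule morphism and $(\bfP',m',\Delta',\delta')$ is a double twisted bialgebra.

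The structural content is already contained in Propositions~\ref{propendofunctor} and \ref{propalgebra}, so the only real difficulty is the bookkeeping: tracking the nested-quotient identifications of Notation~\ref{notationequivalence} (between $\eq[(X\sqcup Y)/\sim']$ and $\{\sim\le\sim'\}$, and between iterated quotients), the ``splitting'' condition controlling where $\Delta'_{X,Y}$ is nonzero, and the flip $c$ concealed inside $m_{1,3,24}$.
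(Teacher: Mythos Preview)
Your proof is correct and follows essentially the same approach as the paper: reduce to Proposition~\ref{propalgebra} for the second-kind structure, dispatch the counit condition directly, and verify the comodule compatibility of $\Delta'$ summand by summand on $\bfP[(X\sqcup Y)/\sim']$ via the splitting/non-splitting dichotomy and the second hypothesis applied termwise. The paper expands $(\Delta'_{X,Y}\otimes\id)\circ\delta'[X\sqcup Y]$ first rather than $\rho'_{X,Y}\circ\Delta'_{X,Y}$, but the two computations are mirror images of one another.
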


\begin{remark}
Note that if $\sim\neq \sim_X\sqcup \sim_Y$, then $X/\sim_X$ and $Y/\sim_Y$ are not disjoints subsets of $(X\sqcup Y)/\sim$
and then $(\Delta_{X/\sim_X,Y/\sim_Y}\otimes \id)\circ \delta_\sim$ has no meaning.
\end{remark}

\begin{proof}
We already obtained in Proposition \ref{propalgebra} that $(\bfP',m',\Delta')$ is an algebra  in the category of  coalgebraic species.
The condition on $\varepsilon_\Delta$ proves that $\varepsilon_\Delta:\bfP'[\emptyset]=\bfP[\emptyset]\longrightarrow \K$
is a comodule morphism. Let us prove that for any finite sets $X$ and $Y$, $\Delta'_{X,Y}$ is a comodule morphism.
Let $\sim'\in \eq[X\sqcup Y]$. We put $\sim'_X=\sim'\cap X^2$ and $\sim'_Y=\sim'\cap Y^2$. 
\begin{align*}
(\Delta'_{X,Y}\otimes \id)\circ \delta'[X\sqcup Y]_{\mid\bfP[X\sqcup Y/\sim']}
&=\sum_{\sim\leqslant \sim'} (\Delta'_{X,Y}\otimes \id)\circ \delta_\sim\\
&=\sum_{\substack{\sim_X\in\eq[X],\\ \sim_Y\in \eq[Y],\\ \sim_X\sqcup \sim_Y\leq \sim'}}
(\Delta_{X/\sim_X,Y/\sim_Y}\otimes \id)\circ \delta_{\sim_X\sqcup \sim_Y}\\
&=\sum_{\substack{\sim_X\in\eq[X],\\ \sim_Y\in \eq[Y],\\ \sim_X\sqcup \sim_Y\leq \sim'}}
m_{1,3,24}\circ (\delta_{\sim_X}\otimes \delta_{\sim_Y})\circ \Delta_{X/\sim_X,Y/\sim_Y}.
\end{align*}
If $\sim'\neq \sim'_X\sqcup \sim'_Y$, then $(\Delta'_{X,Y})_{\mid \bfP[X \sqcup Y/\sim']}=0$.
At least one class of $\sim'$ contains elements of $X$ and elements of $Y$; if $\sim \leq \sim'$,
the classes of $\sim$ are union of classes of $\sim'$, so at least one of them contains elements of $X$ and elements of $Y$.
So $\sim\neq \sim_X\sqcup \sim_Y$. We obtain
\begin{align*}
(\Delta'_{X,Y}\otimes \id)\circ \delta'[X\sqcup Y]_{\mid\bfP[X\sqcup Y/\sim']}&=0
=(\rho_{X,Y}\circ \Delta'_{X,Y})_{\mid \bfP[X \sqcup Y/\sim']}.
\end{align*}
If $\sim'=\sim'_X\sqcup \sim'_Y$, then
\begin{align*}
(\Delta'_{X,Y}\otimes \id)\circ \delta'[X\sqcup Y]_{\mid\bfP[X\sqcup Y/\sim']}
&=\sum_{\substack{\sim_X\in\eq[X],\\ \sim_Y\in \eq[Y],\\ \sim_X\sqcup \sim_Y\leq \sim'}}
m_{1,3,24}\circ (\delta_{\sim_X}\otimes \delta_{\sim_Y})\circ \Delta_{X/\sim_X,Y/\sim_Y}\\
&=\sum_{\substack{\sim_X\leq \sim'_X,\\ \sim_Y\leq \sim'_Y}}
m_{1,3,24}\circ (\delta_{\sim_X}\otimes \delta_{\sim_Y})\circ \Delta_{X/\sim_X,Y/\sim_Y}\\
&=(m_{1,3,24}\circ (\delta[X]\otimes \delta[Y])\circ \Delta'_{X,Y})_{\mid \bfP[X\sqcup Y/\sim']}\\
&=(\rho_{X,Y}\circ \Delta'_{X,Y})_{\mid \bfP[X\sqcup Y/\sim']}. \qedhere
\end{align*}
\end{proof}

\subsection{Links with cooperads}

In the species setting, a cooperad is a couple $(\bfP,\delta)$ where $\bfP$ is a species and 
$\delta:\bfP\longrightarrow \bfP\circ \bfP$ is a counitary and coassociative coproduct, that is to say:
\begin{itemize}
\item The following diagram commutes:
\[\xymatrix{\bfP\ar[r]^\delta \ar[d]_\delta &\bfP\circ \bfP\ar[d]^{\id \circ \delta}\\
\bfP\circ\bfP\ar[r]_{\delta \circ \id}&\bfP\circ \bfP\circ \bfP}\]
\item There exists a species morphism $\varepsilon:\bfP\longrightarrow \mathbf{I}$ such that the following diagram commutes:
\[\xymatrix{\bfP\circ \bfP \ar[rd]_{\varepsilon\circ \id}&\bfP\ar[l]_{\delta} \ar[r]^\delta\ar[d]^{\id}
&\bfP\circ \bfP\ar[ld]^{\id \circ \varepsilon}\\
&\bfP&}\]
\end{itemize}

Let $(\bfP,\delta)$ is a cooperad, such that $\bfP[\emptyset]=(0)$. We consider the twisted algebra $S(\bfP)$.
We then consider the map $(\id \otimes m)\circ \delta:\bfP\longrightarrow (\bfP\circ \com)\otimes S(\bfP)$
and extend it multiplicatively to $S(\bfP)$. The coassociativity of $\delta$ and the existence of its counit implies that 
the result is a contraction-extraction coproduct on $S(\bfP)$.

\subsection{Contraction-extraction on graphs}

\begin{defi}
Let $G\in \calG[X]$ and $\sim\in \eq[X]$. 
\begin{enumerate}
\item We define a graph $G\mid\sim\in \calG[X]$ by
\begin{align*}
V(G\mid\sim)&=V(G),\\
E(G\mid\sim)&=\{\{x,y\}\in E(G)\mid\: x\sim y\}.
\end{align*}
In other words, $G\mid \sim$ is obtained from $G$ by deleting all the edges  which extremities are not equivalent;
or equivalently, $G\mid\sim$ is the disjoint union of the restrictions of $G$ to the equivalence classes of $\sim$.
\item We define a graph $G/\sim\in \calG[X/\sim]$ by
\begin{align*}
V(G/\sim)&=X/\sim,\\
E(G/\sim)&=\{\{\cl_\sim(x),\cl_\sim(y)\}\mid\: \{x,y\}\in E(G)\: \cl_\sim(x)\neq \cl_\sim(y)\}.
\end{align*}
In other words, $G/\sim$ is obtained from $G$ by identifying the vertices according to $\sim$,
then deleting the loops created in the process and the redundant edges.
\item We shall say that $\sim\in \eq_c[G]$ if for any  class $C$ of $\sim$, $G_{\mid C}$ is connected.
\end{enumerate}
\end{defi}

\begin{prop} We define a contraction-extraction coproduct $\delta$ on $\bfG$ as follows: for any finite set $X$,
for any $\sim\in \eq[X]$, for any $G\in \calG[X]$,
\[\delta_\sim(G)=\begin{cases}
G/\sim\otimes G\mid \sim \mbox{ if }\sim\in \eq_c[G],\\
0\mbox{ otherwise}.
\end{cases}\]
It is compatible with the product and the coproduct in the sense of Proposition \ref{propbialgebra}.
\end{prop}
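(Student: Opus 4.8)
The plan is to verify each of the four compatibility conditions appearing in Proposition \ref{propbialgebra}, working directly from the combinatorial description of $\delta_\sim$ on graphs. Throughout, fix a finite set $X\sqcup Y$ and an equivalence $\sim \in \eq[X\sqcup Y]$, and write $\sim_X=\sim\cap X^2$, $\sim_Y=\sim\cap Y^2$. The first observation, which I would isolate as a small lemma, is that for a disjoint union $G=G_1G_2$ with $G_1\in\calG[X]$, $G_2\in\calG[Y]$, we have $\sim\in\eq_c[G]$ if and only if $\sim=\sim_X\sqcup\sim_Y$ with $\sim_X\in\eq_c[G_1]$ and $\sim_Y\in\eq_c[G_2]$: indeed any $\sim$-class meeting both $X$ and $Y$ induces a disconnected subgraph of $G$ (no edges run between $X$ and $Y$ in a disjoint union), so such $\sim$ is not in $\eq_c[G]$; and if $\sim=\sim_X\sqcup\sim_Y$ then each class lies entirely in $X$ or in $Y$, and the induced subgraph is the corresponding induced subgraph of $G_1$ or $G_2$. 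This immediately gives the vanishing branch of the multiplicativity identity. When $\sim=\sim_X\sqcup\sim_Y$, one checks that $G/\sim = (G_1/\sim_X)(G_2/\sim_Y)$ as a disjoint union on $X/\sim_X\sqcup Y/\sim_Y$ (identifying vertices within $X$ resp. $Y$ produces no new edges between the two parts), and likewise $G\mid\sim = (G_1\mid\sim_X)(G_2\mid\sim_Y)$; tracking through the definition of $\rho_{X,Y}$ and the flip $c$ then yields $\delta_\sim\circ m_{X,Y}=(m_{X/\sim_X,Y/\sim_Y}\otimes m_{X,Y})\circ(\id\otimes c\otimes\id)\circ(\delta_{\sim_X}\otimes\delta_{\sim_Y})$ on basis elements $G_1\otimes G_2$.

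Next I would treat the cointeraction identity $(\Delta_{X/\sim_X,Y/\sim_Y}\otimes\id)\circ\delta_\sim = m_{1,3,24}\circ(\delta_{\sim_X}\otimes\delta_{\sim_Y})\circ\Delta_{X,Y}$, assuming $\sim=\sim_X\sqcup\sim_Y$. Take a graph $G\in\calG[X\sqcup Y]$. If $\sim\notin\eq_c[G]$, then some $\sim$-class $C$ (contained, say, in $X$ since $\sim=\sim_X\sqcup\sim_Y$) has $G_{\mid C}$ disconnected, so $\sim_X\notin\eq_c[G_{\mid X}]$, and on the right-hand side every term $\delta_{\sim_X}(G_{\mid X})=0$; both sides vanish. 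If $\sim\in\eq_c[G]$, the left-hand side is $(G/\sim)_{\mid X/\sim_X}\otimes (G/\sim)_{\mid Y/\sim_Y}\otimes (G\mid\sim)$. The right-hand side is $\sum$ over the splitting $\Delta_{X,Y}(G)=G_{\mid X}\otimes G_{\mid Y}$, then $\delta_{\sim_X}(G_{\mid X})\otimes\delta_{\sim_Y}(G_{\mid Y}) = (G_{\mid X}/\sim_X)\otimes(G_{\mid X}\mid\sim_X)\otimes(G_{\mid Y}/\sim_Y)\otimes(G_{\mid Y}\mid\sim_Y)$ (a single term, since $\sim\in\eq_c[G]$ forces $\sim_X\in\eq_c[G_{\mid X}]$ and $\sim_Y\in\eq_c[G_{\mid Y}]$), and $m_{1,3,24}$ reorders this to $(G_{\mid X}/\sim_X)\otimes(G_{\mid Y}/\sim_Y)\otimes\big((G_{\mid X}\mid\sim_X)(G_{\mid Y}\mid\sim_Y)\big)$. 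The identities $(G/\sim)_{\mid X/\sim_X}=G_{\mid X}/\sim_X$, $(G/\sim)_{\mid Y/\sim_Y}=G_{\mid Y}/\sim_Y$ and $G\mid\sim = (G_{\mid X}\mid\sim_X)(G_{\mid Y}\mid\sim_Y)$ are routine from the definitions (contraction commutes with restriction to a union of classes; deleting non-equivalent edges is exactly disjoint union over classes, and the classes of $\sim$ split as classes in $X$ and classes in $Y$). The two remaining conditions are trivial: $\delta_{\sim_\emptyset}(1_\bfG)=1_\bfG\otimes 1_\bfG$ since the empty graph has only the empty equivalence which is in $\eq_c$, and $(\varepsilon_\Delta\otimes\id)\circ\delta_{\sim_\emptyset}$ applied to the empty graph gives $1_\bfG$, matching $\eta_\bfG\circ\varepsilon_\Delta$.

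Before all of this I must of course confirm that $\delta$ is genuinely a contraction-extraction coproduct in the sense of Definition \ref{deficontraction} — naturality with respect to bijections (clear, since relabeling vertices commutes with contraction, edge-deletion, and the connectedness condition), coassociativity $(\delta_\sim\otimes\id)\circ\delta_{\sim'}=(\id\otimes\delta_{\sim'})\circ\delta_\sim$ for $\sim\leq\sim'$ together with vanishing when $\sim\not\leq\sim'$, and the counit axioms. The counit $\epsilon_\delta:\bfG\to\com$ should send every graph with no edges to $1$ and every graph with at least one edge to $0$; then $(\id\otimes\epsilon_\delta)\circ\delta_\sim(G)$ picks out the term where $G\mid\sim$ has no edges, i.e.\ $\sim$ is the equality, giving $\id$ in that case and $0$ otherwise, while $\sum_\sim(\epsilon_\delta\otimes\id)\circ\delta_\sim(G)=\sum_{\sim\in\eq_c[G]:\,G/\sim\text{ edgeless}}G\mid\sim$; here $G/\sim$ is edgeless exactly when every edge of $G$ has both endpoints in one class, and for $\sim\in\eq_c[G]$ this recovers exactly $G$ when $\sim$ is the connected-components partition and contributes nothing otherwise, so the sum is $G$. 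This counit verification, and the coassociativity check on $G$ (which amounts to the identity $(G/\sim)/\overline{\sim'}=G/\sim'$ together with $(G\mid\sim')\mid\sim = G\mid\sim$ and compatibility of the two $\eq_c$-conditions), is the part requiring the most care; the rest is bookkeeping. I expect the main obstacle to be precisely the coassociativity/counit verification for $\delta$ itself — in particular making the nested identification $\eq[X/\sim']\cong\{\sim\leq\sim'\}$ from Notation \ref{notationequivalence} interact correctly with the connectedness constraints $\eq_c[G]$, $\eq_c[G/\sim]$, $\eq_c[G\mid\sim]$ — rather than the four compatibility identities of Proposition \ref{propbialgebra}, which reduce cleanly to the disjoint-union lemma above.
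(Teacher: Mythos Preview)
Your proposal is correct and follows essentially the same route as the paper: the compatibility with $m$ and $\Delta$ via the disjoint-union observation, the counit $\epsilon_\delta$ detecting edgeless graphs with the connected-components partition as the unique surviving $\sim$, and the coassociativity as the crux. Two small points to tidy up in your coassociativity sketch: with the paper's convention $\sim\leq\sim'$ (so $\sim$ is coarser), the identities you need are $(G/\sim')/\overline{\sim}=G/\sim$ and $(G\mid\sim)\mid\sim'=G\mid\sim'$ (your versions have the roles of $\sim$ and $\sim'$ swapped), and you should also record the middle identity $(G/\sim')\mid\overline{\sim}=(G\mid\sim)/\sim'$, together with the equivalence $\bigl(\sim\in\eq_c[G/\sim']\text{ and }\sim'\in\eq_c[G]\bigr)\Leftrightarrow\bigl(\sim'\in\eq_c[G\mid\sim]\text{ and }\sim\in\eq_c[G]\bigr)$, which is exactly what the paper proves in its $\Longrightarrow/\Longleftarrow$ paragraph.
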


\begin{proof}
The compatibility of $\delta$ with the species structure is clear.
Let us prove the coassociativity of $\delta$. Let $X$ be a finite set, $\sim,\sim'\in \eq[X]$ and $G\in \calG[X]$.\\

If $\sim\leq \sim'$, let us prove that $\sim \in \eq_c[G/\sim']$ and $\sim'\in \eq_c[G]$ if, and only if,
$\sim'\in \eq_c[G\mid \sim]$ and $\sim \in \eq_c[G]$.

$\Longrightarrow$. Let $C'$ be a class of $\sim'$. As $\sim'\in \eq_c[G]$, it is a connected subgraph of $G$.
Moreover, as $\sim\leq \sim'$, all its elements are in the same class of $\sim$, so $G_{\mid C'}=(G|\sim)_{\mid C'}$:
as a consequence, $(G|\sim)_{\mid C'}$ is connected, so $\sim'\in \eq_c[G\mid \sim]$. 
Let $C$ be a class of $\sim$, and $x,y\in C$. As $\sim \in \eq_c[G/\sim']$, it is connected in $G/\sim'$: 
there exists a path in $G/\sim'$ from $\cl_{\sim'}(x)$ to $\cl_{\sim'}(y)$. Moreover, as $\sim'\in \eq_c[G]$,
each $\cl_{\sim'}(z)$ is a connected subgraph of $G$, so there is a path from $x$ to $y$ in $G$: $\sim \in \eq_c[G]$.

$\Longleftarrow$. Let $C$ be a class of $\sim$. As $\sim\in \eq_c[G]$, any of its class is a connected subgraph of $G$,
so by contraction is a connected subgraph of $G/\sim'$: $\sim \in \eq_c[G/\sim']$. Let $C'$ be a class of $\sim'$.
As $\sim'\in \eq_c[G\mid \sim]$, it is a connected subgraph of $G\mid \sim$, so also of $G$: $\sim \in \eq_c[G/\sim']$.\\

As a conclusion,
\begin{align*}
(\delta_\sim\otimes \id)\circ \delta_{\sim'}(G)&=\begin{cases}
(G/\sim')/\sim\otimes (G/\sim')\mid \sim\otimes G\mid \sim'\mbox{ if $\sim \in \eq_c[G/\sim']$ and $\sim'\in \eq_c[G]$},\\
0\mbox{ otherwise}
\end{cases}\\
&=\begin{cases}
G/\sim\otimes (G\mid \sim)/\sim'\otimes (G\mid \sim)\mid \sim'
\mbox{ if $\sim'\in \eq_c[G\mid \sim]$ and $\sim \in \eq_c[G]$},\\
0\mbox{ otherwise}
\end{cases}\\
&=(\id \otimes \delta_{\sim'})\circ \delta_\sim(G).
\end{align*}

If we do not  have $\sim\leq \sim'$, then at least one class $C$ of $\sim$ intersects two classes of $\sim'$,
so intersects two connected components of $\mid \sim'$: we obtain that $\sim\notin \eq_c[G\mid \sim']$. So
$\delta_{\sim}(G\mid \sim')=0$ and finally $(\id \otimes \delta_\sim)\circ \delta_{\sim'}(G)=0$.\\

Let us now study the counity. We define a species morphism $\epsilon_\delta:\bfG\longrightarrow\com$ by 
\begin{align*}
&\forall G\in \calG[X],&\epsilon_\delta[X](G)=\begin{cases}
1\mbox{ if }E(G)=\emptyset,\\
0\mbox{ otherwise.}
\end{cases}
\end{align*}
Let $G\in \calG[X]$ and $\sim \in \eq[X]$. If $\sim$ is the equality of $X$,
then $\sim\in \eq_c[G]$, $G/\sim=G$ and $G\mid \sim$ as no edge, so $(\id \otimes \epsilon_\delta[X])\circ \delta_\sim(G)=G$.
Otherwise, either $G\notin \eq_c[G]$ or at least one class of $\sim$ contains an edge, so 
$\epsilon_\delta[X](G\mid \sim)=0$. In both cases, $(\id \otimes \epsilon_\delta[X])\circ \delta_\sim(G)=0$.

Let $\sim\in \eq_c[G]$, such that $E(G/\sim)=\emptyset$. If two vertices of $G$ are related by an edge,
there are necessarily equivalent, so any connected component of $G$ is included in a single class of $\sim$.
As the classes of $\sim$ are connected, $\sim$ is the relation $\sim_c$ which classes are the connected components of $G$.
Moreover, $G/\sim_c$ has no edge and $G\mid \sim_c=G$. Therefore,
\begin{align*}
\sum_{\sim\in \eq[X]}(\epsilon_\delta[X/\sim]\otimes \id)\circ \delta_\sim(G)
&=\sum_{\sim\in \eq_c[G]}(\epsilon_\delta[X/\sim]\otimes \id)\circ \delta_\sim(G)\\
&=(\epsilon_\delta[X/\sim]\otimes \id)\circ \delta_{\sim_c}(G)\\
&=G\mid \sim_c\\
&=G.
\end{align*}

Let us prove the compatibility of $\delta$ with the product. Let $X$ and $Y$ be two finite sets, $\sim\in \eq[X\sqcup Y]$,
$G\in \calG[X]$ and $H\in \calG[Y]$. If $\sim\neq \sim_X\sqcup \sim_Y$, at least one class $C$ of $\sim$ intersects
both $X$ and $Y$, so is not connected in $GH=m_{X,Y}(G\otimes H)$. Therefore, $\sim\notin \eq_c[GH]$ and
\[\delta_\sim\circ m_{X,Y}(G\otimes H)=0.\]
Let us assume that  $\sim=\sim_X\sqcup \sim_Y$. Then $\sim\in \eq_c[GH]$ if, and only if, $\sim_X\in \eq_c[G]$
and $\sim_Y \in \eq_c[H]$, as the connected components of $GH$ are the connected components of $G$ and of $H$.
If so, $(GH)/\sim=(G/\sim_X)(H/\sim_Y)$ and $(GH)\mid \sim=(G\mid \sim_X)(H\mid \sim_Y)$. Therefore,
\begin{align*}
\delta_\sim\circ m_{X,Y}(G\otimes H)&=\begin{cases}
(GH)/\sim\otimes (GH)\mid \sim\mbox{ if }\sim\in \eq_c[GH],\\
0\mbox{ otherwise}
\end{cases}\\
&=\begin{cases}
(G/\sim_X)(H/\sim_Y)\otimes (G\mid \sim_X)(H\mid \sim_Y)\mbox{ if }\sim_X\in \eq_c[G]\mbox{ and }\sim_Y \in \eq_c[H],\\
0\mbox{ otherwise}
\end{cases}\\
&=(m_{X/\sim_X,Y/\sim_Y}\otimes m_{X,Y})\circ (\id \otimes c\otimes \id)\circ (\delta_{\sim_X}\otimes \delta_{\sim_Y})
(G\otimes H).
\end{align*}

Let us finally prove the compatibility of $\delta$ with the coproduct $\Delta$. Let $X$ and $Y$ be two finite sets,
$\sim_X\in \eq[X]$, $\sim_Y\in \eq[Y]$ and $G\in \calG[X]$. We put $\sim=\sim_X\sqcup \sim_Y$.  
Then
\begin{align*}
(\Delta_{X/\sim_X,Y/\sim_Y}\otimes \id)\circ \delta_{\sim}(G) 
&=\begin{cases}
(G/\sim)_{\mid X/\sim_X}\otimes (G/\sim)_{\mid Y/\sim_Y}\otimes G\mid \sim
 \mbox{ if $\sim\in \eq_c[G]$},\\
0\mbox{ otherwise};
\end{cases}\\
m_{1,3,24}\circ (\delta_{\sim_X}\otimes \delta_{\sim_Y})\circ \Delta_{X,Y}(G)
&=\begin{cases}
(G_{\mid X})/\sim_X\otimes (G_{\mid Y})/\sim_Y
\otimes (G_{\mid X})\mid \sim_X (G_{\mid Y})\mid \sim_Y\\
\hspace{.5cm}
\mbox{ if $\sim_X\in \eq_c[G_{\mid X}]$ and $\sim_Y\in \eq_c[G_{\mid Y}]$},\\
0\mbox{ otherwise}.
\end{cases}
\end{align*}
As $\sim=\sim_X\sqcup \sim_Y$, $\sim\in \eq_c[G]$ if and only if $\sim_X\in \eq_c[G_{\mid X}]$ and $\sim_Y\in \eq_c[G_{\mid Y}]$. Moreover, 
\begin{align*}
(G/\sim)_{\mid X/\sim_X}&=(G_{\mid X})/\sim_X,&(G/\sim)_{\mid Y/\sim_Y}&=(G_{\mid Y})/\sim_Y,&
G\mid \sim&=(G_{\mid X})\mid \sim_X (G_{\mid Y})\mid \sim_Y,
\end{align*}
which finally proves the compatibility between $\delta$ and $\Delta$. \end{proof}

\begin{example} Let us apply our machinery to the species of graphs $\bfG$. 
For any finite set $X$, $\bfG'[X]$ is generated by the set $\calG'[X]$ of graphs $G$ such that $V(G)$ is a partition of $X$.
For example, 
\begin{align*}
\calG'[\{a,b\}]&=\{\tddeux{$a$}{$b$},\tdun{$a$}\tdun{$b$},\tdun{$a,b$}\hspace{3mm}\},\\
\calG'[\{a,b,c\}]&=\left\{\begin{array}{c}
\tdtroisdeux{$a$}{$b$}{$c$},\tdtroisdeux{$b$}{$a$}{$c$},\tdtroisdeux{$a$}{$c$}{$b$},
\gdtroisun{$a$}{$c$}{$b$},\tddeux{$a$}{$b$}\tdun{$c$},\tddeux{$a$}{$c$}\tdun{$b$},
\tddeux{$b$}{$c$}\tdun{$a$},\tdun{$a$}\tdun{$b$}\tdun{$c$},\\
\tddeux{$a,b$}{$c$}\hspace{2mm},\tddeux{$a,c$}{$b$}\hspace{2mm},\tddeux{$b,c$}{$a$}\hspace{2mm},
\tdun{$a,b$}\hspace{3mm}\tdun{$c$},\tdun{$a,c$}\hspace{3mm}\tdun{$b$},\tdun{$b,c$}\hspace{3mm}\tdun{$a$},
\tdun{$a,b,c$}\hspace{3mm}
\end{array}\right\}.
\end{align*}
The product is given by the disjoint union of these graphs. If $G\in \calG'[A\sqcup B]$, then
\[\Delta_{A,B}(G)=\begin{cases}
G_{\mid A}\otimes G_{\mid B}\mbox{ if $A$ and $B$ are union of vertices of $G$},\\
0\mbox{ otherwise}.
\end{cases}\]
For example, in $\bfG'[\{a,b,c\}]$,
\begin{align*}
\Delta_{\{a,b\},\{c\}}(\tddeux{$a,b$}{$c$}\hspace{2mm})
&=\tdun{$\{a,b\}$}\hspace{4mm}\otimes \tdun{$c$},&
\Delta_{\{c\},\{a,b\}}(\tddeux{$a,b$}{$c$}\hspace{2mm})
&=\tdun{$c$}\otimes\tdun{$\{a,b\}$}\hspace{4.5mm} ,&
\Delta_{\{a,c\},\{b\}}(\tddeux{$a,b$}{$c$}\hspace{2mm})&=0.
\end{align*}

For any graph $G\in \calG'[X]$,
\[\delta(G)=\sum_{\sim\in \eq_c[G]} G/\sim\otimes G\mid \sim,\]
noticing that the vertices of $G/\sim$ are here disjoint unions of vertices of $G$, so form a partition of $X$. 
For example, in $\bfG'[\{a,b,c\}]$,
\begin{align*}
\delta(\tddeux{$a,b$}{$c$}\hspace{2mm})&=\tddeux{$a,b$}{$c$}\hspace{2mm}\otimes 
\tdun{$a,b$}\hspace{3mm}\tdun{$c$}
+\tdun{$a,b,c$}\hspace{5mm}\otimes \tddeux{$a,b$}{$c$}\hspace{2mm},\\
\delta(\tdtroisdeux{$a$}{$b$}{$c$})&=\tdtroisdeux{$a$}{$b$}{$c$}\otimes \tdun{$a$}\tdun{$b$}\tdun{$c$}
+\tdun{$a,b,c$}\hspace{5mm}\otimes \tdtroisdeux{$a$}{$b$}{$c$}
+\tddeux{$a,b$}{$c$}\hspace{2mm} \otimes \tddeux{$a$}{$b$}\tdun{$c$}
+\tddeux{$a$}{$b,c$}\hspace{2mm} \otimes \tddeux{$b$}{$c$}\tdun{$a$},\\
\delta(\gdtroisun{$a$}{$c$}{$b$})&=\gdtroisun{$a$}{$c$}{$b$}\otimes \tdun{$a$}\tdun{$b$}\tdun{$c$}
+\tdun{$a,b,c$}\hspace{5mm}\otimes  \gdtroisun{$a$}{$c$}{$b$}
+\tddeux{$a,b$}{$c$}\hspace{2mm} \otimes \tddeux{$a$}{$b$}\tdun{$c$}
+\tddeux{$a,c$}{$b$}\hspace{2mm} \otimes \tddeux{$b$}{$c$}\tdun{$b$}
+\tddeux{$b,c$}{$a$}\hspace{2mm} \otimes \tddeux{$b$}{$c$}\tdun{$a$}.
\end{align*}\end{example}

\section{Bosonic Fock functors}

\subsection{The classical bosonic Fock functor}

Let us recall the definition of the bosonic Fock functor $\calF$ \cite{Aguiar2010}. 
\begin{itemize}
\item If $\bfP$ is a species, then $\calF[\bfP]$ is the graded vector space
\[\calF[\bfP]=\bigoplus_{n=0}^\infty \coinv(\bfP[n])
=\bigoplus_{n=0}^\infty \frac{\bfP[n]}{\vect(\bfP[\sigma](p)-p\mid \sigma\in \sym_n,\: p\in \bfP[n])}.\]
\item If $f:\bfP\longrightarrow \bfQ$ is a species morphism, then
\[\calF[f]=\bigoplus_{n=0}^\infty \overline{f[n]},\]
where $\overline{f[n]}:\coinv(\bfP[n])\longrightarrow \coinv(\bfQ[n])$
is the linear map induced by $f[n]$ (which is compatible with the action of $\sym_n$, as a species morphism). 
\end{itemize}

It is proved in \cite{Aguiar2010} that if $\bfP$ is a twisted algebra (resp. coalgebra, bialgebra), then $\calF[\bfP]$ is a graded algebra
(resp. coalgebra, bialgebra), with the product and coproduct given by
\begin{align*}
&\forall p\in \bfP[k],\: \forall q\in \bfP[l],&
m(\overline{p}\otimes \overline{q})&=\overline{\bfP[\sigma_{k,l}]\circ m_{[k],[l]}(p\otimes q)},\\
&\forall p\in \bfP[k+l],&
\Delta(\overline{p})&=\sum_{A\sqcup B=[n]} \overline{(\bfP[\sigma_A]\otimes \bfP[\sigma_B])\circ \Delta_{A,B}(p)},
\end{align*}
where for any $I\subseteq [n]$, $\sigma_I:I\longrightarrow [|I|]$ is the unique increasing bijection
and for any $k,l\geq 0$,
\[\sigma_{k,l}:\left\{\begin{array}{rcl}
[k]\sqcup[l]&\longrightarrow&[k+l]\\
i\in [k]&\longrightarrow&i,\\
j\in [l]&\longrightarrow&k+j.
\end{array}\right.\]

The following theorem is proved in \cite[Theorem 61]{Foissy39}:

\begin{theo}
Let $(\bfP,m,\Delta,\delta)$ be a double twisted bialgebra. Then $\calF[\bfP]$ is a double bialgebra, with the coproduct $\delta$ 
given on any $p\in \bfP[n]$ by
\[\delta(\overline{p})=(\pi_n\otimes \pi_n)\circ \delta[n](p),\]
where $\pi_n:\bfP[n]\longrightarrow\coinv(\bfP[n])$ is the canonical surjection. 
\end{theo}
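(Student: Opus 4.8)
The plan is to build the second coproduct $\delta$ on $\calF[\bfP]$ out of the family $(\delta[n])_{n\geq 0}$ and then check every axiom of a double bialgebra by descending the corresponding twisted axiom along the canonical surjections $\pi_n:\bfP[n]\to\coinv(\bfP[n])$, using throughout that $\calF$ already turns the twisted bialgebra $(\bfP,m,\Delta)$ into a graded bialgebra \cite{Aguiar2010}. First I would set $\delta(\overline p)=(\pi_n\otimes\pi_n)\circ\delta[n](p)$ for $p\in\bfP[n]$, and take as $\delta$-counit the map $\epsilon_\delta(\overline p)=\epsilon_\delta[n](p)$, where $\epsilon_\delta[n]:\bfP[n]\to\K$ is the counit of the coalgebra $\bfP[n]$. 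Both descend to $\coinv(\bfP[n])$: since $\delta$ is a morphism of coalgebraic species, $\delta[n]$ is $\sym_n$-equivariant, so for $\sigma\in\sym_n$ one has $(\pi_n\otimes\pi_n)\circ\delta[n]\circ\bfP[\sigma]=(\pi_n\bfP[\sigma]\otimes\pi_n\bfP[\sigma])\circ\delta[n]=(\pi_n\otimes\pi_n)\circ\delta[n]$ because $\pi_n\circ\bfP[\sigma]=\pi_n$; and $\epsilon_\delta:\bfP\to\com$ being a species morphism gives $\epsilon_\delta[n]\circ\bfP[\sigma]=\epsilon_\delta[n]$. Coassociativity of $\delta$ and the counit identities on $\calF[\bfP]$ then follow immediately from those of $\delta[n]$ on $\bfP[n]$, the surjections $\pi_n$ merely being pushed through; for instance $(\epsilon_\delta\otimes\id)\circ\delta(\overline p)=(\epsilon_\delta[n]\otimes\pi_n)\circ\delta[n](p)=\pi_n(p)=\overline p$.

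Next I would show that $\delta:\calF[\bfP]\to\calF[\bfP]\otimes\calF[\bfP]$ is an algebra morphism, so that $(\calF[\bfP],m,\delta)$ is a bialgebra. The unit is handled by $\delta(\overline{1_\bfP})=(\pi_0\otimes\pi_0)\circ\delta_\emptyset(1_\bfP)=\overline{1_\bfP}\otimes\overline{1_\bfP}$, using $\delta_\emptyset(1_\bfP)=1_\bfP\otimes 1_\bfP$. For the product, fix $p\in\bfP[k]$, $q\in\bfP[l]$ and set $n=k+l$. Starting from the Fock formula $m(\overline p\otimes\overline q)=\overline{\bfP[\sigma_{k,l}]\circ m_{[k],[l]}(p\otimes q)}$ and applying $\delta[n]$, the equivariance of $\delta$ replaces $\delta[n]\circ\bfP[\sigma_{k,l}]$ by $(\bfP[\sigma_{k,l}]\otimes\bfP[\sigma_{k,l}])\circ\delta_{[k]\sqcup[l]}$, and the compatibility diagram of $\delta$ with the product in the definition of a twisted bialgebra of the second kind rewrites $\delta_{[k]\sqcup[l]}\circ m_{[k],[l]}$ as $(m_{[k],[l]}\otimes m_{[k],[l]})\circ(\id\otimes c\otimes\id)\circ(\delta_{[k]}\otimes\delta_{[l]})$. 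Applying $\pi_n\otimes\pi_n$, recognising $\pi_n\circ\bfP[\sigma_{k,l}]\circ m_{[k],[l]}$ as the Fock product $m(\overline{\,\cdot\,}\otimes\overline{\,\cdot\,})$, and noting that the species flip $c$ restricted to $\bfP[l]\otimes\bfP[k]$ is simply the linear flip, the right-hand side collapses to the product of $\delta(\overline p)$ and $\delta(\overline q)$ in $\calF[\bfP]\otimes\calF[\bfP]$. Hence $\delta$ is multiplicative, and $(\calF[\bfP],m,\delta)$ is a bialgebra.

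It remains to verify the two comodule-morphism conditions that promote $(\calF[\bfP],m,\Delta,\delta)$ to a double bialgebra. For the counit, $(\varepsilon_\Delta\otimes\id)\circ\delta(\overline x)=\varepsilon_\Delta(x)\,\overline{1_\bfP}$ for $x\in\bfP[\emptyset]$ is the descent of the twisted axiom $(\varepsilon_\Delta\otimes\id)\circ\delta_\emptyset(x)=\varepsilon_\Delta(x)\,1_\bfP$. For $\Delta$, one must check $(\Delta\otimes\id)\circ\delta=m_{1,3,24}\circ(\delta\otimes\delta)\circ\Delta$ on $\calF[\bfP]$. Writing $\delta[n](p)=p^{(1)}\otimes p^{(2)}$ and $\Delta(\overline p)=\sum_{A\sqcup B=[n]}\overline{(\bfP[\sigma_A]\otimes\bfP[\sigma_B])\circ\Delta_{A,B}(p)}$, the left-hand side expands to $\sum_{A\sqcup B=[n]}\overline{(\bfP[\sigma_A]\otimes\bfP[\sigma_B])\circ\Delta_{A,B}(p^{(1)})}\otimes\overline{p^{(2)}}$. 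On the right-hand side, applying $\delta\otimes\delta$ term by term (again pushing $\bfP[\sigma_A],\bfP[\sigma_B]$ through $\delta$ by equivariance), then $m_{1,3,24}$, and using the Fock product formula to absorb the relabellings in the third tensor slot, one obtains $\sum_{A\sqcup B=[n]}\overline{\bfP[\sigma_A](p_A')}\otimes\overline{\bfP[\sigma_B](p_B')}\otimes\overline{m_{A,B}(p_A''\otimes p_B'')}$, where $\Delta_{A,B}(p)=p_A\otimes p_B$, $\delta_A(p_A)=p_A'\otimes p_A''$ and $\delta_B(p_B)=p_B'\otimes p_B''$. Summand by summand, the equality of these two expressions is exactly the image, under $\bfP[\sigma_A]\otimes\bfP[\sigma_B]\otimes\id$ followed by the coinvariant projections, of the twisted cointeraction identity $(\Delta_{A,B}\otimes\id)\circ\delta_{[n]}=\rho_{A,B}\circ\Delta_{A,B}$ with $\rho_{A,B}=m_{1,3,24}\circ(\delta_A\otimes\delta_B)$.

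The genuinely routine part is that everything about $m$, $\Delta$ and their (co)units is already supplied by $\calF$, so nothing there has to be reproved. The main obstacle is the bookkeeping just sketched: one has to carry all the relabelling bijections $\sigma_A,\sigma_B,\sigma_{k,l}$ through the computation and verify that each becomes invisible after applying the $\pi_\bullet$, which is precisely where well-definedness of $\calF$ is used. The single subtlest point is, in the $\Delta$-cointeraction step, matching the map $m_{1,3,24}$ occurring in the classical identity on $\calF[\bfP]$ with the twisted coaction $\rho_{A,B}$ — i.e.\ checking that passing to coinvariants intertwines $\rho_{A,B}$ on the $\bfP[A]^{\otimes2}\otimes\bfP[B]^{\otimes2}$-type pieces with the honest product of $\calF[\bfP]$ acting on the last two slots. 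Once the dictionary $(\bfP[n],\delta[n])\leftrightarrow(\coinv(\bfP[n]),\delta)$ is laid out carefully, all three verifications become the coinvariant images of the corresponding axioms of a double twisted bialgebra.
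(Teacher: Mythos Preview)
Your argument is correct. You correctly identify the three verifications needed (well-definedness and coassociativity of $\delta$, multiplicativity of $\delta$, and the cointeraction between $\Delta$ and $\delta$), and you carry them out by descending the corresponding twisted axioms along the surjections $\pi_n$, using equivariance to absorb all the relabelling bijections $\sigma_A,\sigma_B,\sigma_{k,l}$ into the coinvariants. The only point that deserves one extra line of care is the one you flag yourself: in the $\Delta$-cointeraction, the third tensor factor on the right-hand side is the Fock product $\overline{\bfP[\sigma_A](p_A'')}\cdot\overline{\bfP[\sigma_B](p_B'')}$, which by the Fock product formula and naturality of $m$ equals $\overline{\bfP[\sigma_{|A|,|B|}\circ(\sigma_A\sqcup\sigma_B)]\circ m_{A,B}(p_A''\otimes p_B'')}=\overline{m_{A,B}(p_A''\otimes p_B'')}$, so the summand matches the image of the twisted identity $(\Delta_{A,B}\otimes\id)\circ\delta_{[n]}=m_{1,3,24}\circ(\delta_A\otimes\delta_B)\circ\Delta_{A,B}$ exactly as you claim.

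As for comparison: the paper does not give a proof of this theorem at all; it simply quotes it from \cite[Theorem 61]{Foissy39}. Your proof is therefore not to be compared with anything in the present text, but it is the natural argument and is essentially the one in the cited reference.
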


\subsection{Coloured Fock functors}

If $\bfP$ and $\bfQ$ are twisted algebras (resp. coalgebras, bialgebras, double bialgebras), then 
their Hadamard tensor product $\bfP\boxtimes \bfQ$
is also a twisted algebra (resp. coalgebra, bialgebra, double bialgebra). We here shall take $\bfP=\bfT_V$:

\begin{defi}
Let $V$ be a vector space. The $V$-coloured Fock functor $\calF_V$ associates to any species $\bfP$ the graded vector space
$\calF_V[\bfP]=\calF[\bfT_V\boxtimes \bfP]$. 
If $f:\bfP\longrightarrow \bfQ$ is a species morphism, then $\calF_V[f]=\calF[\id_{\bfT_V}\otimes f]$.
\end{defi}

In other words, for any species $V$,
\begin{align*}
\calF_V[\bfP]&=\bigoplus_{n=0}^\infty \coinv(V^{\otimes n}\otimes \bfP[n])\\
&=\bigoplus_{n=0}^\infty \frac{V^{\otimes n}\otimes \bfP[n]}
{\vect(v_1\ldots v_n \otimes \bfP[\sigma](p)-v_{\sigma(1)}\ldots v_{\sigma(n)}\otimes p\mid
\sigma\in \sym_n,\: p\in \bfP[n],\:v_1,\ldots,v_n\in V)}\\
&=V^{\otimes n}\otimes_{\sym_n} \bfP[n],
\end{align*}
where $\sym_n$ acts on the left on $\bfP[n]$ by the action given by the species structure,
and acts on the right of $V^{\otimes n}$ by permutation of the tensors:
\begin{align*}
&\forall \sigma \in \sym_n,\: \forall v_1,\ldots,v_n \in V,&
v_1\ldots v_n \cdot \sigma&=v_{\sigma(1)}\ldots v_{\sigma(n)}.
\end{align*}
If $f:\bfP\longrightarrow \bfQ$ is a species morphism, then for any $p\in \bfP[n]$, for any $v_1,\ldots,v_n\in V$,
\[\calF_V[f](\overline{v_1\ldots v_n\otimes p})=\overline{v_1\ldots v_n\otimes f(p)}.\]

As $\bfT_V$ is a twisted bialgebra, and even a double twisted bialgebra if $V$ is a coalgebra:

\begin{prop} 
\begin{enumerate}
\item Let $V$ be a vector space. If $\bfP$ is a twisted algebra (resp. coalgebra, bialgebra),
then $\calF_V[\bfP]$ is a graded algebra (resp. coalgebra, bialgebra).
\item Let $(V,\delta_V)$ be a coalgebra. If $\bfP$ is a double twisted bialgebra, then $\calF_V[\bfP]$
is a double bialgebra. 
\end{enumerate}
\end{prop}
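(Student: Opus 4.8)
The plan is to deduce the proposition formally from three facts already established: $(a)$ $\bfT_V$ is a commutative and cocommutative twisted bialgebra for any vector space $V$, and moreover a double twisted bialgebra whenever $(V,\delta_V)$ is a coalgebra (Proposition \ref{proptensor}); $(b)$ the Hadamard tensor product of two twisted algebras (resp.\ coalgebras, bialgebras, double twisted bialgebras) is again an object of the same kind, since $(\boxtimes,\com)$ makes the category of species symmetric monoidal; $(c)$ the bosonic Fock functor $\calF$ sends twisted algebras (resp.\ coalgebras, bialgebras) to graded algebras (resp.\ coalgebras, bialgebras) \cite{Aguiar2010}, and by the Theorem recalled above (\cite[Theorem 61]{Foissy39}) sends double twisted bialgebras to double bialgebras. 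Since by definition $\calF_V[\bfP]=\calF[\bfT_V\boxtimes\bfP]$, the statement follows by chaining these.

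For part $(1)$: if $\bfP$ is a twisted algebra, then $\bfT_V\boxtimes\bfP$ is a twisted algebra by $(b)$ (as $\bfT_V$ is one by $(a)$), hence $\calF_V[\bfP]=\calF[\bfT_V\boxtimes\bfP]$ is a graded algebra by $(c)$. The coalgebra and bialgebra cases are obtained verbatim, replacing ``algebra'' by ``coalgebra'', resp.\ ``bialgebra'', throughout and using that $\bfT_V$ carries the corresponding structure.

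For part $(2)$: assume in addition that $(V,\delta_V)$ is a coalgebra, so that $\bfT_V$ is a double twisted bialgebra by Proposition \ref{proptensor}. If $\bfP$ is a double twisted bialgebra, then $\bfT_V\boxtimes\bfP$ is a double twisted bialgebra by $(b)$, and therefore $\calF_V[\bfP]=\calF[\bfT_V\boxtimes\bfP]$ is a double bialgebra by the Theorem. There is no genuine obstacle here: once the factorisation $\calF_V=\calF\circ(\bfT_V\boxtimes -)$ is observed, everything reduces to a citation. The only point requiring a (routine) verification is $(b)$, which amounts to the standard diagram chase showing that a monoidal product of two (bi/co)algebras is a (bi/co)algebra; if explicit formulas for the structure maps of $\calF_V[\bfP]$ are wanted, one simply composes the formulas for the product and coproducts of $\calF$ with those of $\bfT_V\boxtimes\bfP$.
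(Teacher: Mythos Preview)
Your proposal is correct and matches the paper's approach exactly: the paper does not give a separate proof of this proposition but introduces it with the clause ``As $\bfT_V$ is a twisted bialgebra, and even a double twisted bialgebra if $V$ is a coalgebra:'', treating it as an immediate consequence of the factorisation $\calF_V=\calF[\bfT_V\boxtimes-]$ together with the stability of twisted (bi/co)algebras and double twisted bialgebras under $\boxtimes$ and the cited properties of $\calF$. Your write-up simply makes explicit the three ingredients the paper leaves implicit.
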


\begin{example}
Let us apply this to the double twisted bialgebra $\com$.
By construction,
\[\calF_V[\com]=S(V),\]
 with its usual product $m$  and coproduct $\Delta$,
defined on any $v\in V$ by $\Delta(v)=v\otimes 1+1\otimes v$. If moreover $(V,\delta_V)$ is a coalgebra, 
then $S(V)$ is a double bialgebra, with the coproduct $\delta$ obtained by multiplicative extension of $\delta_V$.
In particular, if $V=\K$ with its usual coalgebra structure, we obtain the double bialgebra $\K[X]$, 
with its coproducts defined by
\begin{align*}
\Delta(X)&=X\otimes 1+1\otimes X,&\delta(X)&=X\otimes X. 
\end{align*}
\end{example}

\subsection{The example of $\comp$}

Let us apply this to the double twisted bialgebra $\comp$. For any vector space $V$,
\[\calF_V[\comp]=T(S^+(V)).\]
In order to distinguish the different products, we  denote the concatenation product of 
$T(S^+(V))$ by $\mid$. Let us describe the quasishuffle product $\squplus$ of $\calF_V[\comp]$.
For any $p_1,\ldots,p_{k+l}\in S^+(V)$,
\[(p_1\mid \ldots \mid p_k) \squplus (p_{k+1}\mid \ldots \mid p_{k+l})
=\sum_{\sigma \in \QSh(k,l)} \left(\prod_{i\in\sigma^{-1}(1)} p_i\right)\mid \ldots \mid 
\left(\prod_{i\in\sigma^{-1}(\max(\sigma))} p_i\right),\]
where $\QSh(k,l)$ is the set of $(k,l$)-quasishuffles, that is to say surjections $\sigma:[k+l]\longrightarrow [\max(\sigma)]$
such that $\sigma(1)<\ldots <\sigma(k)$ and $\sigma(k+1)<\ldots <\sigma(k+l)$. 
The coproduct is given by deconcatenation: for any $p_1,\ldots,p_n\in S^+(V)$,
\[\Delta(p_1\mid \ldots \mid p_n)=\sum_{k=0}^n
p_1\mid \ldots \mid p_k\otimes p_{k+1}\mid \ldots \mid p_n.\]
If $(V,\delta_V)$ is a coalgebra, then $T(S^+(V))$ inherits a second coproduct $\delta$
making it a double bialgebra. Firstly, the coproduct of $V$ is extended in a coproduct $\delta_{S^+(V)}$ on $S^+(V)$
by multiplicativity. Then, for any $p_1,\ldots,p_k\in S^+(V)$, with Sweeder's notation $\delta_{S^+(V)}(p)=p'\otimes p''$,
\[\delta(p_1\mid \ldots \mid p_n)=\sum_{1\leq i_1<\ldots<i_p<k}
\left(\prod_{i=1}^{i_1}p'_i\right)\mid\ldots \mid \left(\prod_{i=i_p+1}^k p'_i\right)
\otimes (p''_1\mid \ldots\mid p''_{i_1})\squplus \ldots \squplus (p''_{i_p+1}\mid \ldots \mid p''_k).\]

When $(V,\cdot,\delta_V)$ is a commutative, non necessarily unitary bialgebra, 
using the canonical algebra epimorphism from $S^+(V)$ to $V$:

\begin{prop}\label{propTV}
\begin{enumerate}
\item Let $(V,\cdot)$ be a commutative, non necessarily unitary algebra. 
Then $T(V)$ is a bialgebra, with the following product $\squplus$ and coproduct $\Delta$:
\begin{itemize}
\item For any $v_1,\ldots,v_{k+l}\in V$,
\[v_1\ldots v_k \squplus v_{k+1} \ldots v_{k+l}
=\sum_{\sigma \in \QSh(k,l)} \left(\prod^\cdot_{i\in\sigma^{-1}(1)} v_i\right)\ldots
\left(\prod^\cdot_{i\in\sigma^{-1}(\max(\sigma))} v_i\right),\]
where the symbol $\displaystyle \prod^\cdot$ means that the product which is used is the product $\cdot$ of $V$.
\item For any $v_1,\ldots,v_n\in V$,
\begin{align*}
\Delta(v_1\ldots v_n)&=\sum_{k=0}^n v_1\ldots v_k\otimes v_{k+1}\ldots v_n,\\
\varepsilon_\Delta(v_1\ldots v_n)&=0\mbox{ if }n\geqslant 1.
\end{align*}
\end{itemize}
\item Let $(V,\cdot,\delta_V)$ be a commutative, non necessarily unitary bialgebra. 
Then $T(V)$ inherits a second coproduct, making it a double bialgebra: for any $v_1,\ldots,v_k\in V$,
\begin{align*}
&\delta(v_1\ldots v_k)\\
&=\sum_{1\leq i_1<\ldots<i_p<k}
\left(\prod_{1\leq i\leq i_1}^\cdot v'_i\right)\ldots  \left(\prod_{i_p+1\leq i\leq k}^\cdot  v'_i\right)
\otimes (v''_1 \ldots v''_{i_1})\squplus \ldots \squplus (v''_{i_p+1} \ldots  v''_k).
\end{align*}
The counit $\epsilon_\delta$ is given by
\[\epsilon_\delta(v_1\ldots v_k)=\begin{cases}
0\mbox{ if }k\geqslant 2,\\
\epsilon_V(v_1)\mbox{ if }k=1.
\end{cases}\]
\end{enumerate}\end{prop}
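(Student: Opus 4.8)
The plan is to obtain the structures on $T(V)$ by transporting, along a surjection, the structures already available on $\calF_V[\comp]=T(S^+(V))$. Since $(V,\cdot)$ is commutative, the universal property of $S^+(V)$ as the free commutative non-unital algebra on $V$ provides a unique morphism of non-unital algebras $\pi\colon S^+(V)\to V$ extending $\id_V$; explicitly $\pi$ sends a product $v_1\cdots v_n$ computed in $S^+(V)$ to the product $\prod^{\cdot}_{i=1}^n v_i$ computed in $V$. It is onto (it restricts to $\id_V$ on the generators), so applying $\pi$ to each tensor factor defines a surjective linear map $T(\pi)\colon T(S^+(V))\to T(V)$ preserving the length grading and fixing the empty word.

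For part $1$, I would use that $\calF_V[\comp]=\calF[\bfT_V\boxtimes\comp]$ is a bialgebra — being the bosonic Fock image of a twisted bialgebra — whose product is the quasishuffle product $\squplus$ built from the product of $S^+(V)$ and whose coproduct is deconcatenation $\Delta$. I would then check that $T(\pi)$ intertwines these with the $\squplus$ and $\Delta$ of the statement: deconcatenation obviously commutes with a letterwise map, and since $\pi$ is an algebra morphism, applying it to a quasishuffle term replaces each block product $\prod_{i\in\sigma^{-1}(j)}p_i$ of $S^+(V)$ by the corresponding $\cdot$-product $\prod^{\cdot}_{i\in\sigma^{-1}(j)}\pi(p_i)$, which is exactly a term of the $V$-quasishuffle. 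Because $T(\pi)$ is a surjection intertwining products and coproducts, all bialgebra axioms descend from $T(S^+(V))$ to $T(V)$: composing each identity (associativity, coassociativity, the bialgebra compatibility, the unit and counit relations) with the appropriate tensor power of $T(\pi)$ and using the intertwining relations reduces it to the corresponding identity on $T(S^+(V))$, which holds; surjectivity then gives it on $T(V)$.

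For part $2$, $(V,\delta_V)$ is in particular a coalgebra, so $T(S^+(V))=\calF_V[\comp]$ is a double bialgebra, with second coproduct $\delta$ built from the multiplicative extension $\delta_{S^+(V)}$ of $\delta_V$. The additional fact I would prove is that $\pi$ is also a coalgebra morphism: $\delta_V\circ\pi$ and $(\pi\otimes\pi)\circ\delta_{S^+(V)}$ are both algebra morphisms from $S^+(V)$ to the commutative algebra $V\otimes V$ — the first because $\pi$ and $\delta_V$ are algebra morphisms (here $\delta_V$ is one precisely because $V$ is a bialgebra), the second because $\delta_{S^+(V)}$ is multiplicative and $\pi\otimes\pi$ is an algebra morphism — and they agree on the generators $V$ (where $\pi$ is the identity and $\delta_V(v)\in V\otimes V$), so they coincide. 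Hence $\pi$ is a morphism of bialgebras, which makes $T(\pi)$ intertwine the second coproducts $\delta$ (the Sweedler factors $p'\otimes p''$ handled by the coalgebra-morphism property of $\pi$, the $\prod^{\cdot}$-factors by the algebra-morphism property, the inner $\squplus$ by part $1$) and the counits $\epsilon_\delta$. The same transport-along-a-surjection argument then carries the remaining axioms — coassociativity and counit of $\delta$, multiplicativity of $\delta$, the cointeraction identity $(\Delta\otimes\id)\circ\delta=m_{1,3,24}\circ(\delta\otimes\delta)\circ\Delta$, and the fact that $\varepsilon_\Delta$ is a comodule morphism — from $T(S^+(V))$ to $T(V)$, and matching the descended maps with the displayed formulas finishes the proof.

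The main obstacle is the verification that $T(\pi)$ genuinely intertwines the three structure maps with the explicit formulas of the statement, which is exactly the claim that $\pi\colon S^+(V)\to V$ is a morphism of bialgebras; the only slightly non-obvious point there is the coalgebra-morphism property of $\pi$, handled by the ``two algebra morphisms agreeing on generators'' argument above. Everything else — that (co)associativity, the compatibilities, the units and counits pass from $T(S^+(V))$ to $T(V)$ — is the routine observation that such identities descend along a surjective map intertwining all the operations.
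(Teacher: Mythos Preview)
Your proposal is correct and follows essentially the same approach as the paper: construct the algebra morphism $\pi:S^+(V)\to V$ from commutativity of $V$, extend it letterwise to a surjection $T(S^+(V))\to T(V)$, check that this surjection intertwines $\squplus$, $\Delta$, and (when $V$ is a bialgebra) $\delta$, and conclude by transport along a surjection. The paper's proof is terser, declaring the intertwining relations ``immediate'' where you spell out the reasons; in particular, your ``two algebra morphisms agreeing on generators'' argument for the coalgebra compatibility of $\pi$ is exactly what justifies the step the paper leaves implicit.
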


\begin{proof}
As $V$ is commutative, there exists an algebra morphism $\pi:S^+(V)\longrightarrow V$, sending any $v\in V$
to itself. It is extended as an algebra morphism from $T(S^+(V))$ to $T(V)$, which we also denote by $\pi$. 
It is immediate that
\begin{align*}
m\circ (\pi\otimes \pi)&=\pi\circ m,&\Delta \circ \pi&=(\pi\otimes \pi)\circ \Delta,
\end{align*}
and, if $V$ is a bialgebra,
\[\delta \circ \pi=(\pi\otimes \pi)\circ \delta.\]
 As $\pi$ is surjective, we obtain that $(T(V),\squplus,\Delta)$ is a bialgebra and, if $V$ is a commutative bialgebra, 
that $(T(V),\squplus, \Delta,\delta)$ is a double bialgebra.
\end{proof}

\begin{remark} \begin{enumerate}
\item Another proof of this result can be found in \cite{Foissy40} and \cite{Ebrahimi-Fard2017-2}. 
\item This is a generalization of the construction of quasishuffle algebras of \cite{Hoffman2000,Hoffman2020}, 
which we recover when $V$
is the algebra of a monoid. \\
\end{enumerate}
\end{remark}

\begin{example}
\begin{enumerate}
\item Let $V$ be the bialgebra associated to the monoid $(\N_{>0},+)$. A basis of $T(V)$ is given by compositions,
that is to say finite sequences of positive integers, and we recover the double bialgebra $\QSym$ of quasi-symmetric functions
\cite{Aguiar2006-2,Gelfand1995,Hazewinkel2005,Malvenuto2011,Stanley1999}.
\item Let $V=\K$, with its usual product and coproduct. In order to avoid confusions, we denote by $x$ the unit $1$ of $\K$,
seen as an element of $T(\K)$. Then
\begin{align*}
\Delta(x)&=x\otimes 1+1\otimes x,&\delta(x)&=x\otimes x.
\end{align*}
Moreover, for any $k,l\in \N$,
\[x^k \squplus x^l=\sum_{\max(k,l)\leq n\leq k+l} \frac{n!}{(n-k)!(n-l)!(k+l-n)!}x^n.\]
An easy induction proves that for any $n\geq 1$,
\[x^{\squplus n}=n!x^n+\mbox{terms $a_kx^k$ with $k<n$}.\]
We deduce that when the characteristic of $\K$ is zero, $x$ generates $(T(\K),\squplus)$.
In this case, $(T(\K),\squplus)$ is isomorphic to $\K[X]$, with its usual product and its coproducts defined by 
\begin{align*}
\Delta(X)&=X\otimes 1+1\otimes X,&\delta(X)&=X\otimes X. 
\end{align*}
In other words, identifying $\K[X]^{\otimes 2}$ and $\K[X,Y]$ with the algebra map
\[\left\{\begin{array}{rcl}
\K[X]\otimes \K[X]&\longrightarrow&\K[X,Y]\\
P(X)\otimes Q(X)&\longrightarrow&P(X)Q(Y),
\end{array}\right.\]
for any $P\in \K[X]$,
\begin{align*}
\Delta(P)(X,Y)&=P(X+Y),&
\delta(P)(X,Y)&=P(XY),\\
\varepsilon_\Delta(P)&=P(0),&
\epsilon_\delta(P)&=P(1). 
\end{align*}
\end{enumerate}\end{example}

\subsection{Coloured Fock functor and contractions}

Let $(\bfP,\delta)$ be a species with a contraction-extraction coproduct $\delta$. We put $\bfP'=\bfP\circ \com$.
Let us fix  a commutative, non necessarily unitary algebra  $(V,\cdot)$.
We define a map $\pi: \calF_V[\bfP']\longrightarrow \calF_V[\bfP]$ as follows:
if $v_1,\ldots,v_n \in V$ and $p\in \bfP[[n]/\sim]$, where $\sim \in \eq[n]$, 
let us choose a bijection $\sigma:[n]/\sim\longrightarrow [k]$. We put
\[\pi(\overline{v_1\ldots v_n \otimes p})=\overline{\left(\prod_{i\in \sigma^{-1}(1)}^\cdot v_i\right)
\ldots \left(\prod_{i\in \sigma^{-1}(k)}^\cdot v_i\right)\otimes \bfP[\sigma](p)} \in 
\coinv(V^{\otimes k}\otimes \bfP[k]),\]
where again the symbols $\displaystyle \prod^\cdot$ mean that these products are taken in $(V,\cdot)$. 

\begin{lemma}
The map $\pi$ is well-defined.
\end{lemma}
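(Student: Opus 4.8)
The plan is to check that the formula for $\pi(\overline{v_1\ldots v_n \otimes p})$ does not depend on the two choices involved: the choice of representative of the class $\overline{v_1\ldots v_n \otimes p}$ in $\coinv(V^{\otimes n}\otimes \bfP'[n])$, and the choice of the bijection $\sigma:[n]/\sim\longrightarrow[k]$. These two verifications are essentially the same computation, so I would organize the argument around a single lemma about how the formula transforms under a permutation of $[n]$, and then deduce both independences from it.

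\medskip

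\textbf{Step 1: behaviour under permutations of $[n]$.} Fix $\tau\in\sym_n$, and let $p\in\bfP[[n]/\sim]$. By the species structure of $\bfP'=\bfP\circ\com$, the action of $\tau$ on $\bfP'[n]$ sends $p\in\bfP[[n]/\sim]$ to $\bfP[\tau/\sim](p)\in\bfP[[n]/\sim_\tau]$, where $\sim_\tau=\eq[\tau](\sim)$. On the other side, $v_1\ldots v_n$ becomes $v_{\tau^{-1}(1)}\ldots v_{\tau^{-1}(n)}$ under the right $\sym_n$-action on $V^{\otimes n}$. I need to pick a bijection $\sigma':[n]/\sim_\tau\longrightarrow[k]$ to compute $\pi$ of the transformed element. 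The natural choice is $\sigma'=\sigma\circ(\tau/\sim)^{-1}$, where $\tau/\sim:[n]/\sim\longrightarrow[n]/\sim_\tau$ is the induced bijection from Notation \ref{notationequivalence}. With this choice, $\bfP[\sigma'](\bfP[\tau/\sim](p))=\bfP[\sigma'\circ(\tau/\sim)](p)=\bfP[\sigma](p)$, and the fibre $\sigma'^{-1}(j)=(\tau/\sim)(\sigma^{-1}(j))$ consists of the classes $\cl_{\sim_\tau}(\tau(x))$ for $x$ with $\cl_\sim(x)\in\sigma^{-1}(j)$; since the vertex $\cl_{\sim_\tau}(\tau(x))$ carries the product of the $v_{\tau^{-1}(y)}$ over $y$ in that class of $\sim_\tau$, i.e.\ the $v_z$ over $z$ in the corresponding class of $\sim$, the product $\prod^\cdot_{i\in\sigma'^{-1}(j)}$ of the reindexed letters equals $\prod^\cdot_{i\in\sigma^{-1}(j)}v_i$ exactly because $V$ is commutative (the grouping of letters into blocks is preserved, only their order inside each block may change). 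Hence $\pi$ of the transformed element, computed with $\sigma'$, equals $\pi(\overline{v_1\ldots v_n\otimes p})$ computed with $\sigma$ — but both live in $\coinv(V^{\otimes k}\otimes\bfP[k])$, so there is no residual ambiguity.

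\medskip

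\textbf{Step 2: independence of $\sigma$.} Two bijections $\sigma_1,\sigma_2:[n]/\sim\longrightarrow[k]$ differ by a permutation $\rho=\sigma_2\circ\sigma_1^{-1}\in\sym_k$. Computing $\pi$ with $\sigma_2$ versus $\sigma_1$ yields two elements of $V^{\otimes k}\otimes\bfP[k]$ that differ precisely by the simultaneous action of $\rho$ (permuting the $k$ blocks of letters and applying $\bfP[\rho]$), and such elements have the same image in $\coinv(V^{\otimes k}\otimes\bfP[k])$ by definition of coinvariants. \textbf{Step 3: independence of the representative.} A general element of $\coinv(V^{\otimes n}\otimes\bfP'[n])$ is a coset, and changing the representative means replacing $v_1\ldots v_n\otimes p$ by $v_1\ldots v_n\cdot\tau\otimes\bfP'[\tau]^{-1}(\cdots)$-type expressions; equivalently, it suffices to know that $v_1\ldots v_n\otimes p$ and its image under any $\tau\in\sym_n$ map to the same element of $\calF_V[\bfP]$, which is exactly Step 1 (choosing $\sigma'$ there and invoking Step 2 to compare with an arbitrary bijection of $[n]/\sim_\tau$). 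Combining the three steps, $\pi$ is well-defined.

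\medskip

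The main obstacle I anticipate is purely bookkeeping: keeping straight the three layers of indexing — the original set $[n]$, its quotient $[n]/\sim$, and the linearization $[k]$ — together with the two $\sym$-actions (left on $\bfP$, right on $V^{\otimes n}$) and the induced maps $\tau/\sim$ and $\sigma/\sim$ of Notation \ref{notationequivalence}. The only genuine mathematical input beyond this bookkeeping is the commutativity of $V$, which is what allows the unordered product $\prod^\cdot$ over each block $\sigma^{-1}(j)$ to be insensitive to how the elements of that block get reordered when we permute $[n]$; I would make sure this is stated explicitly at the point where it is used.
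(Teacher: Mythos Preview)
Your proposal is correct and follows essentially the same approach as the paper: both check independence of the bijection $\sigma$ (the two outputs differ by the action of a permutation in $\sym_k$, hence coincide in coinvariants) and independence of the representative (a permutation $\tau\in\sym_n$ reshuffles elements within each block, so commutativity of $V$ makes the block-products agree). The only difference is organizational---the paper does the $\sigma$-independence first and then the representative-independence, while you frame Step~1 as a unifying lemma---but the mathematical content and the key use of commutativity of $V$ are identical.
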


\begin{proof}
Let us first prove that this does not depend on the choice of the bijection $\sigma$. Let $\sigma':[n]/\sim\longrightarrow [k]$
be another bijection. We put $\tau=\sigma'\circ \sigma^{-1}\in \sym_k$, so that $\sigma'=\tau\circ \sigma$. 
\begin{align*}
&\overline{\left(\prod_{i\in \sigma'^{-1}(1)}^\cdot v_i\right)
\ldots \left(\prod_{i\in \sigma'^{-1}(k)}^\cdot v_i\right)\otimes \bfP[\sigma'](p)} \\
&=\overline{\left(\prod_{i\in \sigma^{-1}(\tau^{-1}(1))}^\cdot v_i\right)
\ldots \left(\prod_{i\in \sigma^{-1}(\tau^{-1}(k))}^\cdot v_i\right)\otimes\bfP[\tau]\circ\bfP[\sigma](p)} \\
&=\overline{\left(\prod_{i\in \sigma^{-1}(1)}^\cdot v_i\right)
\ldots \left(\prod_{i\in \sigma^{-1}(k)}^\cdot v_i\right)\otimes \bfP[\sigma](p)},
\end{align*}
by definition of $V^{\otimes k}\otimes_{\sym_k} \bfP[k]$. Let us now take $v_1,\ldots,v_n \in V$,
$\sim\in \eq[n]$ and $\tau\in \sym_n$. Let us choose a bijection $\sigma:X/\sim_\tau\longrightarrow [k]$.
Then $\sigma'=\sigma \circ \tau/\sim:X/\sim\longrightarrow [k]$ is a bijection. 
\begin{align*}
&\overline{\left(\prod_{i\in \sigma^{-1}(1)}^\cdot v_i\right)
\ldots \left(\prod_{i\in \sigma^{-1}(k)}^\cdot v_i\right)\otimes \bfP[\sigma]\circ \bfP'[\tau](p)} \\ 
&=\overline{\left(\prod_{i\in \sigma^{-1}(1)}^\cdot v_i\right)
\ldots \left(\prod_{i\in \sigma^{-1}(k)}^\cdot v_i\right)\otimes \bfP[\sigma\circ \tau/\sim](p)} \\ 
&=\overline{\left(\prod_{i\in \sigma^{-1}(1)}^\cdot v_i\right)
\ldots \left(\prod_{i\in \sigma^{-1}(k)}^\cdot v_i\right)\otimes \bfP[\sigma'](p)} \\ 
&=\overline{\left(\prod_{\tau^{-1}(i)\in \sigma'^{-1}(1)}^\cdot v_i\right)
\ldots \left(\prod_{\tau^{-1}(i)\in \sigma'^{-1}(k)}^\cdot v_i\right)\otimes \bfP[\sigma'](p)} \\ 
&=\overline{\left(\prod_{i\in \sigma'^{-1}(1)}^\cdot v_{\tau(i)}\right)
\ldots \left(\prod_{i\in \sigma'^{-1}(k)}^\cdot v_{\tau(i)}\right)\otimes \bfP[\sigma'](p)}.
\end{align*}
Note that the last equality holds because of the commutativity of $\cdot$, the factors in these products
being permuted according to $\tau$. \end{proof}

\begin{prop} Let $(V, \cdot)$ be a commutative, non necessarily unitary algebra.
\begin{enumerate}
\item Let us assume that $(\bfP,m)$ is a twisted algebra. Then $\pi:(\calF_V[\bfP'],m')\longrightarrow (\calF_V[\bfP],m)$
is an algebra morphism.
\item Let us assume that $(\bfP,\Delta)$ is a twisted coalgebra. Then $\pi:(\calF_V[\bfP'],\Delta')\longrightarrow 
(\calF_V[\bfP],\Delta)$ is a coalgebra morphism.
\end{enumerate}
\end{prop}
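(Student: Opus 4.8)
The plan is to verify both assertions directly from the explicit formulas for the product and the coproduct of a bosonic Fock functor, combined with the definition of $\pi$. The only genuine content is a bijection: for $\sim\in\eq[n]$, the set decompositions $[n]=A\sqcup B$ compatible with $\sim$ (i.e.\ those for which no class of $\sim$ meets both $A$ and $B$) are in bijection with the set decompositions $[n]/\sim=A'\sqcup B'$ of the quotient, via $A'\mapsto A=\bigcup_{c\in A'}c$. Following the style of the preceding lemma, I will never pass to a ``canonical'' representative prematurely: for each term appearing in a sum I will choose the auxiliary bijections on the two sides so that the underlying tensors in $V^{\otimes\bullet}$ and the underlying elements of $\bfP[\bullet]$ literally agree, which is legitimate since $\pi$ has already been shown independent of these choices and the coinvariant quotient absorbs any remaining relabelling.

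\emph{Item (1).} Take generators $\overline{v_1\ldots v_n\otimes p}$ with $p\in\bfP[[n]/\sim]$, $\sim\in\eq[n]$, and $\overline{w_1\ldots w_m\otimes q}$ with $q\in\bfP[[m]/\approx]$, $\approx\in\eq[m]$. Computing $\pi\circ m'$: by Proposition~\ref{propendofunctor} the product of $\bfP'=\bfP\circ\com$ glues $[n]/\sim$ and $[m]/\approx$ into the quotient of $[n]\sqcup[m]$ by $\sim\sqcup\approx$ through $m_{[n]/\sim,[m]/\approx}$, while the Fock product on the $\bfT_V$-component concatenates $v_1\ldots v_n$ with $w_1\ldots w_m$; then $\pi$ collapses the $n+m$ letters along the classes of $\sim\sqcup\approx$, each class being multiplied up in $(V,\cdot)$. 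Computing $m\circ(\pi\otimes\pi)$: $\pi$ collapses the $v$'s along the classes of $\sim$ and the $w$'s along the classes of $\approx$ separately, after which $m$ concatenates and applies $m_{[k],[l]}$ in $\bfP$. Since no class of $\sim\sqcup\approx$ meets both $[n]$ and $[m]$, the two recipes perform exactly the same products in $V$, inside exactly the same classes and in the induced order, and exactly the same product $m_{[k],[l]}$ in $\bfP$; hence they agree after the evident relabelling. Preservation of the unit is immediate, since there is nothing to collapse in degree $0$, so $\pi(1)=1$.

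\emph{Item (2).} Take $\overline{v_1\ldots v_n\otimes p}$ with $p\in\bfP[[n]/\sim]$. On one side, $\Delta'$ sums over $[n]=A\sqcup B$; by Proposition~\ref{propendofunctor} the $\bfP'$-component vanishes unless $\sim=\sim_A\sqcup\sim_B$, in which case it equals $\Delta_{A/\sim_A,B/\sim_B}(p)$, while the $\bfT_V$-component distributes the letters $v_i$ according to $A\sqcup B$ and relabels each half increasingly; then $\pi\otimes\pi$ collapses each half. On the other side, $\pi$ first collapses the $n$ letters along the $k$ classes of $\sim$, producing $\overline{w_1\ldots w_k\otimes\bar p}$ with $\bar p\in\bfP[k]$, after which the Fock coproduct of $\bfT_V\boxtimes\bfP$ sums over $[k]=A'\sqcup B'$. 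Under the bijection $A'\mapsto A$ described above, the $\sim$-classes contained in $A$ are precisely those indexed by $A'$, so the collapsed-then-split letters on the left agree with the split-then-collapsed letters on the right, and $\Delta_{A/\sim_A,B/\sim_B}(p)$ matches the corresponding restriction of $\bar p$, because the quotient of a disjoint union by an equivalence that does not straddle the parts is the disjoint union of the quotients, compatibly with the identifications of Notation~\ref{notationequivalence}. Thus the two expressions coincide term by term. Finally, $\pi$ sends positive degree to positive degree (if $n\geq 1$ then $k\geq 1$) and acts as the identity in degree $0$, so it commutes with the counit of $\calF_V[\bfP]$, which vanishes in positive degree and equals $\varepsilon_\Delta$ in degree $0$.

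The step I expect to be the main obstacle is not any conceptual point but the triple bookkeeping of relabellings: the increasing bijections $\sigma_A,\sigma_B$ built into the Fock formulas, the bijection $[n]/\sim\to[k]$ built into $\pi$, and the induced quotient bijections $\sigma/\sim$ coming from the composition of species. Reconciling these is routine but error-prone; I would tame it exactly as in the preceding lemma, by choosing, for each summand, representatives on the two sides that literally coincide and invoking well-definedness of $\pi$ to discard the choices. The single non-formal ingredient, already packaged in that lemma, is that commutativity of $(V,\cdot)$ is what makes $\pi$ well defined; with that in hand, everything here is the manipulation of the two displayed formulas.
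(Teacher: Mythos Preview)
Your proposal is correct and follows essentially the same approach as the paper: a direct verification from the explicit Fock-functor formulas for $m$ and $\Delta$ together with the definition of $\pi$, with the key observation in part (2) being the bijection between decompositions $[n]=A\sqcup B$ compatible with $\sim$ and decompositions $[k]=I\sqcup J$ of the quotient. The paper carries this out as a chain of symbolic equalities with explicit choices of the bijections $\sigma$, $\sigma_I$, $\sigma'_I$, whereas you describe the same computation in prose and defer the relabelling bookkeeping to the well-definedness of $\pi$; both amount to the same argument.
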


\begin{proof}
1. Let $k,l\geq 0$, $\sim_X\in \eq[k]$, $\sim_Y\in \eq[l]$, $p\in \bfP[[k]/\sim_X]$, $q\in \bfP[[l]/\sim_Y]$ 
and $v_1,\ldots,v_{k+l}\in V$. We choose bijections $\sigma_X:[k]/\sim_X\longrightarrow [k']$
and $\sigma_Y:[l]/\sim_Y\longrightarrow [l']$. Putting $\sim=\sim_X\sqcup \sim_Y$,
then $\sigma=\sigma_{k',l'}\circ (\sigma_X\otimes \sigma_Y):[k]\sqcup [l]/\sim\longrightarrow [k'+l']$ is a bijection.
\begin{align*}
&\pi(\overline{v_1\ldots v_k\otimes p}. \overline{v_{k+1}\ldots v_{k+l}\otimes q})\\
&=\pi(\overline{v_1\ldots v_{k+l}\otimes \bfP[\sigma]\circ m_{[k]/\sim_X,[l]/\sim_Y}(p\otimes q)})\\
&=\overline{\left(\prod^\cdot_{i\in \sigma_X^{-1}(1)}v_i\right)\ldots \left(\prod^\cdot_{i\in \sigma_X^{-1}(k')}v_i\right)
\left(\prod^\cdot_{i\in \sigma_Y^{-1}(1)}v_i\right)\ldots \left(\prod^\cdot_{i\in \sigma_Y^{-1}(l')}v_i\right)}\\
&\hspace{2cm}\overline{\otimes\bfP[\sigma_{k',l'}]\circ m_{[k'],[l']}(\bfP[\sigma_X](p)\otimes \bfP[\sigma_Y](q))}\\
&=\overline{\left(\prod^\cdot_{i\in \sigma_X^{-1}(1)}v_i\right)\ldots \left(\prod^\cdot_{i\in \sigma_X^{-1}(k')}v_i\right)
\otimes \bfP[\sigma_X](p)}
.\overline{\left(\prod^\cdot_{i\in \sigma_Y^{-1}(1)}v_i\right)\ldots \left(\prod^\cdot_{i\in \sigma_Y^{-1}(l')}v_i\right)
\otimes \bfP[\sigma_Y](q)}\\
&=\pi(\overline{v_1\ldots v_k\otimes p})\cdot \pi(\overline{v_{k+1}\ldots v_{k+l}\otimes q}).
\end{align*}

2. Let $n\geqslant 0$, $\sim \in \eq[n]$, $v_1,\ldots,v_n\in V$ and $p\in \bfP[[n]/\sim]$. 
Let $\sigma:[n]/\sim\longrightarrow [k]$ be a bijection. For any $I\subseteq [k]$, $\sigma_I\circ \sigma_{\mid \sigma^{-1}(I)}$
is a bijection denoted by $\sigma'_I$ from $\sigma^{-1}(I)$ to $[|I|]$. Then
\begin{align*}
&(\pi\otimes \pi)\circ \Delta(\overline{v_1\ldots v_n\otimes p})\\
&=\sum_{I\sqcup J=[k]} (\pi\otimes \pi) \left(\overline{\prod_{\sigma(i)\in I} v_i \otimes \bfP[\sigma_I\circ 
\sigma_{ \sigma^{-1}(I)}](p^{(1)_{\sigma^{-1}(I)}})}\otimes \overline{\prod_{\sigma(j)\in J} v_j \otimes 
\bfP[\sigma_J\circ \sigma_{ \sigma^{-1}(J)}](p^{(2)_{\sigma^{-1}(J)}})}\right)\\
&=\sum_{I\sqcup J=[k]} \overline{\left(\prod_{i\in \sigma_I'^{-1}(1)}^\cdot v_i\right)
\ldots \left(\prod_{i\in \sigma_I'^{-1}(|I|)}^\cdot v_i\right)\bfP[\sigma_I]\left(\bfP[\sigma](p)^{(1)_I}\right)}\\
&\hspace{2cm}
\overline{\left(\prod_{j\in \sigma_J'^{-1}(1)}^\cdot v_j\right)
\ldots \left(\prod_{j\in \sigma_J'^{-1}(|J|)}^\cdot v_j\right)\bfP[\sigma_J]\left(\bfP[\sigma](p)^{(2)_J}\right)}\\
&=\Delta\circ \pi(\overline{v_1\ldots v_n\otimes p}).  \qedhere
\end{align*}
 \end{proof}

Therefore,  if $(\bfP,m,\Delta)$ is a twisted bialgebra, $\pi:(\calF_V[\bfP'],m',\Delta')\longrightarrow 
(\calF_V[\bfP],m,\Delta)$ is a bialgebra morphism.

\begin{prop}
Let $(V,\cdot,\delta_V)$ be a  commutative, non necessarily unitary bialgebra. 
Let $(\bfP,m,\Delta)$ be a twisted bialgebra, with a contraction-extraction coproduct $\delta'$ as in 
Proposition \ref{propbialgebra}. There exists a unique coproduct $\delta$ on $\calF_V[\bfP]$, making it a double bialgebra,
such that $\pi:\calF_V[\bfP']\longrightarrow \calF_V[\bfP]$ is a morphism of double bialgebras.
\end{prop}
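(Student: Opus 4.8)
The plan is to present $\calF_V[\bfP]$ as a quotient of the double bialgebra $\calF_V[\bfP']$ through $\pi$ and to transport the second coproduct. First, $\pi$ is surjective: for $p\in\bfP[n]$ and $v_1,\ldots,v_n\in V$, viewing $p$ inside $\bfP[[n]/\sim]\subseteq\bfP'[n]$ for $\sim$ the equality of $[n]$ and taking $\sigma=\id$, the definition of $\pi$ gives $\pi(\overline{v_1\ldots v_n\otimes p})=\overline{v_1\ldots v_n\otimes p}$; since such elements span $\calF_V[\bfP]$, the maps $\pi$, $\pi\otimes\pi$ and $\pi^{\otimes 3}$ are onto. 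A morphism of double bialgebras being in particular a bialgebra morphism for the second coproduct, any $\delta$ as in the statement must satisfy $\delta\circ\pi=(\pi\otimes\pi)\circ\widetilde\delta$ and $\epsilon_\delta\circ\pi=\epsilon_{\widetilde\delta}$, where $\widetilde\delta$ and $\epsilon_{\widetilde\delta}$ denote the second coproduct of the double bialgebra $\calF_V[\bfP']=\calF[\bfT_V\boxtimes\bfP']$ (Proposition \ref{proptensor}, the Hadamard product of double twisted bialgebras, and the $\calF$ construction recalled above) and its counit. By surjectivity of $\pi$ these equations determine $\delta$ and $\epsilon_\delta$, so only existence remains.

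We already know that $\pi\colon(\calF_V[\bfP'],m',\Delta')\to(\calF_V[\bfP],m,\Delta)$ is a surjective bialgebra morphism, so $K:=\ker\pi$ is a two-sided ideal, a coideal for $\Delta'$, and is annihilated by $\varepsilon_{\Delta'}$; hence $\calF_V[\bfP]\cong\calF_V[\bfP']/K$ as a bialgebra. It therefore suffices to check that $K$ is a coideal for $\widetilde\delta$, i.e.
\[\widetilde\delta(K)\ \subseteq\ K\otimes\calF_V[\bfP']+\calF_V[\bfP']\otimes K,\qquad \epsilon_{\widetilde\delta}(K)=0,\]
equivalently that $(\pi\otimes\pi)\circ\widetilde\delta$ and $\epsilon_{\widetilde\delta}$ factor through $\pi$. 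Granting this, $\delta$ and $\epsilon_\delta$ are well defined on the quotient, and each defining identity of a double bialgebra --- coassociativity and counitality of $\delta$, multiplicativity of $\delta$, the cointeraction identity making $\Delta$ and $\varepsilon_\Delta$ comodule morphisms, $\delta_\emptyset(1)=1\otimes 1$ --- holds in $\calF_V[\bfP']$ and is carried to $\calF_V[\bfP]$ by $\pi$, $\pi\otimes\pi$ or $\pi^{\otimes 3}$, which are surjective and intertwine all the structure maps.

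The substance is the coideal property, and this is where the main difficulty lies. One first makes $K$ explicit: decomposing $\bfP'[n]=\bigoplus_{\sim\in\eq[n]}\bfP[[n]/\sim]$ inside $\calF_V[\bfP']=\bigoplus_n V^{\otimes n}\otimes_{\sym_n}\bfP'[n]$, the argument used to prove that $\pi$ is well defined shows that $K$ is spanned by the differences
\[\overline{v_1\ldots v_n\otimes p}\ -\ \overline{\left(\prod_{i\in C_1}^\cdot v_i\right)\ldots\left(\prod_{i\in C_k}^\cdot v_i\right)\otimes\bfP[\sigma](p)},\]
with $p\in\bfP[[n]/\sim]$, $C_1,\ldots,C_k$ the classes of $\sim$, and $\sigma\colon[n]/\sim\to[k]$ any bijection. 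To evaluate $\widetilde\delta$ on such a difference one uses that, by Proposition \ref{proptensor} and the fact that the second coproduct of the double twisted bialgebra $\bfP'$ (Proposition \ref{propbialgebra}) restricts on $\bfP[X/\sim']\subseteq\bfP'[X]$ to $\bigoplus_{\sim\leq\sim'}\delta'_\sim$, the coproduct $\widetilde\delta$ sends $\overline{v_1\ldots v_n\otimes p}$ with $p\in\bfP[[n]/\sim']$ to
\[\sum_{\sim\leq\sim'}\overline{v'_1\ldots v'_n\otimes p^{(1)_\sim}}\ \otimes\ \overline{v''_1\ldots v''_n\otimes p^{(2)_\sim}},\qquad \delta_V(v_i)=v'_i\otimes v''_i,\quad \delta'_\sim(p)=p^{(1)_\sim}\otimes p^{(2)_\sim}.\]
Applying $\pi\otimes\pi$ contracts the $V$-labels of the first leg along the classes of $\sim$ and those of the second leg along the classes of $\sim'$; since $\delta_V$ is an algebra morphism the contracted first-leg labels are $\left(\prod_{i\in C}^\cdot v_i\right)'$ and the contracted second-leg labels $\left(\prod_{i\in C}^\cdot v_i\right)''$, and under the bijection $\{\sim\leq\sim'\}\cong\eq[[n]/\sim']$ of Notation \ref{notationequivalence} (together with the naturality of $\delta'$, $\epsilon_\delta$ and $\bfP$ in bijections) this is exactly the outcome of the same computation performed on the normalized representative $\overline{\left(\prod_{i\in C_1}^\cdot v_i\right)\ldots\otimes\bfP[\sigma](p)}$. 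Hence $(\pi\otimes\pi)\widetilde\delta$ agrees on the two terms of each generator of $K$, which gives the coideal inclusion; the identity $\epsilon_{\widetilde\delta}(K)=0$ is the analogous, easier check, using that $\epsilon_V$ is an algebra morphism and $\epsilon_\delta\colon\bfP\to\com$ a species morphism. The only genuine obstacle is the combinatorial bookkeeping in this last step --- tracking the refinement condition $\sim\leq\sim'$, the order in which $V$-labels are contracted relative to $\delta_V$, and the chosen bijections --- which rests on nothing beyond the compatibilities of Proposition \ref{propbialgebra} and the commutativity and comultiplicativity of $(V,\cdot,\delta_V)$.
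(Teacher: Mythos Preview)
Your proposal is correct and follows essentially the same route as the paper: both arguments establish that $(\pi\otimes\pi)\circ\delta'$ factors through $\pi$ by the same computation, using the bijection $\{\sim\leq\sim'\}\cong\eq[[n]/\sim']$, the naturality of $\delta'$, and the fact that $\delta_V$ is an algebra morphism so that $\prod^\cdot v'_i$ and $\prod^\cdot v''_i$ reassemble as $(\prod^\cdot v_i)'$ and $(\prod^\cdot v_i)''$. The only cosmetic difference is that the paper first defines $\delta$ on $\calF_V[\bfP]$ via the section $\bfP\hookrightarrow\bfP'$ (setting $\delta=(\pi\otimes\id)\circ\delta'$ on representatives with $\sim=$ equality) and then checks $\delta\circ\pi=(\pi\otimes\pi)\circ\delta'$, whereas you phrase it as a coideal condition on $\ker\pi$; the underlying verification is identical.
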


\begin{proof}
The uniqueness of $\delta$ comes from the surjectivity of $\pi$.
We define $\delta:\calF_V[\bfP]\longrightarrow \calF_V[\bfP]$ as follows:
for any $v_1,\ldots,v_n \in V$, for any $p\in \bfP[n]$,
\[\delta(\overline{v_1\ldots v_n\otimes p})=(\pi\otimes \id)\circ \delta'(\overline{v_1\ldots v_n\otimes p}),\]
seeing $\bfP$ as a subspecies of $\bfP'$. Let us prove that $\delta\circ \pi=(\pi\otimes \pi)\circ \delta'$.
Let $v_1\ldots v_n \in V$ and $p\in \bfP[[n]/\sim']$. 
Let us choose a bijection $\sigma':[n]/\sim'\longrightarrow [k']$. 
If $\sim\leq \sim'$, let us choose a bijection $\sigma_\sim:[n]/\sim\longrightarrow [k_\sim]$,
of the form $\sigma_\sim=\tau_\sim \circ \sigma'/\sim$, with $\tau$ a bijection from $[k']/\sim_{\sigma'}$ to $[k_\sim]$.
Then
\begin{align*}
\delta \circ \pi(\overline{v_1\ldots v_n\otimes p})&=\delta\left(\overline{
\left(\prod_{i\in \sigma'^{-1}(1)}^\cdot v_i\right)\ldots \left(\prod_{i\in \sigma'^{-1}(k')}^\cdot v_i\right)
\otimes \bfP'[\sigma'](p)}\right)\\
&=\sum_{\sim\leq \sim'}\pi\left(\overline{
\left(\prod_{i\in \sigma'^{-1}(1)}^\cdot v_i\right)'\ldots \left(\prod_{i\in \sigma'^{-1}(k')}^\cdot v_i\right)'
\otimes \bfP'[\sigma'](p^{(1)_\sim})}\right)\\
&\hspace{2cm} \otimes \overline{
\left(\prod_{i\in \sigma'^{-1}(1)}^\cdot v_i\right)''\ldots \left(\prod_{i\in \sigma'^{-1}(k')}^\cdot v_i\right)''
\otimes \bfP'[\sigma'](p^{(2)_\sim})}\\
&=\sum_{\sim\leq \sim'}\pi\left(\overline{
\left(\prod_{i\in \sigma'^{-1}(1)}^\cdot v'_i\right)\ldots \left(\prod_{i\in \sigma'^{-1}(k')}^\cdot v'_i\right)
\otimes \bfP[\sigma'/\sim](p^{(1)_\sim})}\right)\\
&\hspace{2cm} \otimes \overline{
\left(\prod_{i\in \sigma'^{-1}(1)}^\cdot v''_i\right)\ldots \left(\prod_{i\in \sigma'^{-1}(k')}^\cdot v''_i\right)
\otimes \bfP[\sigma'](p^{(2)_\sim})}\\
&=\sum_{\sim\leq \sim'}\overline{
\left(\prod_{i\in \sigma_\sim^{-1}(1)}^\cdot v'_i\right)\ldots \left(\prod_{i\in \sigma_\sim^{-1}(k_\sim)}^\cdot v'_i\right)
\otimes \bfP[\sigma_\sim](p^{(1)_\sim})}\\
&\hspace{2cm} \otimes \overline{
\left(\prod_{i\in \sigma'^{-1}(1)}^\cdot v''_i\right)\ldots \left(\prod_{i\in \sigma'^{-1}(k')}^\cdot v''_i\right)
\otimes \bfP[\sigma'](p^{(2)_\sim})}\\
&=\sum_{\sim\leq \sim'}\pi\left(\overline{v'_1\ldots v'_n \otimes p^{(1)_\sim}}\right)
\otimes \pi\left(\overline{v''_1\ldots v''_n \otimes p^{(2)_\sim}}\right)\\
&=(\pi\otimes \pi)\circ \delta'(\overline{v_1\ldots v_n\otimes p}),
\end{align*}
with Sweedler's notation $\delta_V(v)=v'\otimes v''$ for any $v\in V$ and $\delta_\sim(p)=p^{(1)_\sim}\otimes p^{(2)_\sim}$
for $p\in \bfP[[n]/\sim]$.
So, indeed, $(\pi\otimes \pi)\circ \delta'=\delta\circ \pi$. As $(\bfP',m',\Delta',\delta')$ is a double twisted bialgebra,
its quotient $(\bfP,m,\Delta,\delta)$ is a double twisted bialgebra. \end{proof}

\begin{cor} \label{corbifoncteur}
For any twisted bialgebra $\bfP$ with a compatible contraction-extraction coproduct, 
$\calF_\bullet[\bfP]$ is a functor from the category of commutative, non necessarily unitary bialgebras  
to the category of double bialgebras. 
\end{cor}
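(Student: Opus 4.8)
By the previous proposition, $\calF_V[\bfP]$ is a double bialgebra for every commutative (non necessarily unitary) bialgebra $V$, so all that remains is to define the action of $\calF_\bullet[\bfP]$ on a morphism $f\colon V\longrightarrow W$ of such bialgebras and to check the two functoriality axioms. The plan is to produce this morphism through the species $\bfT_\bullet$ and the surjection $\pi$ of the previous proposition, which lets us avoid writing $\delta$ on $\calF_V[\bfP]$ by hand.

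First I would record that $\bfT_\bullet$ is functorial in the relevant sense: for any linear $f\colon V\longrightarrow W$, the natural transformation $\bfT_f\colon\bfT_V\longrightarrow\bfT_W$ sending $\bigotimes_{x\in X}v_x$ to $\bigotimes_{x\in X}f(v_x)$ is a morphism of twisted bialgebras $(\bfT_V,m,\Delta)\longrightarrow(\bfT_W,m,\Delta)$, because the product and coproduct of $\bfT_\bullet$ do not involve the internal structure of $V$; and if $f$ is moreover a coalgebra morphism, the formula for $\delta_X$ in Proposition \ref{proptensor} shows that $\bfT_f$ is a morphism of double twisted bialgebras. Setting $\bfP'=\bfP\circ\com$, which is a double twisted bialgebra by Proposition \ref{propbialgebra}, the Hadamard tensor product $\bfT_f\boxtimes\id_{\bfP'}$ is then a morphism of double twisted bialgebras, and applying the bosonic Fock functor $\calF$ (which sends morphisms of double twisted bialgebras to morphisms of double bialgebras) yields a morphism of double bialgebras $T_f\colon\calF_V[\bfP']\longrightarrow\calF_W[\bfP']$ with $T_f(\overline{v_1\ldots v_n\otimes p})=\overline{f(v_1)\ldots f(v_n)\otimes p}$.

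Next I would define $\calF_f[\bfP]$ on $\calF_V[\bfP]=\bigoplus_{k\geq 0}V^{\otimes k}\otimes_{\sym_k}\bfP[k]$ by $\overline{w_1\ldots w_k\otimes q}\longmapsto\overline{f(w_1)\ldots f(w_k)\otimes q}$, which is well defined since $f^{\otimes k}$ is $\sym_k$-equivariant, and check that the square
\[\xymatrix{\calF_V[\bfP']\ar[r]^{T_f}\ar[d]_{\pi_V}&\calF_W[\bfP']\ar[d]^{\pi_W}\\
\calF_V[\bfP]\ar[r]_{\calF_f[\bfP]}&\calF_W[\bfP]}\]
commutes. On a generator $\overline{v_1\ldots v_n\otimes p}$ with $p\in\bfP[[n]/\sim]$, the map $\pi$ merges the colours of the vertices lying in a common class of $\sim$ using the product of the colouring bialgebra, whereas $\calF_f[\bfP]$ and $T_f$ apply $f$ colour by colour, so the two ways around the square agree precisely because $f$ is an algebra morphism; this gives $\calF_f[\bfP]\circ\pi_V=\pi_W\circ T_f$. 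Since $\pi_V$ is a surjective morphism of double bialgebras and $\pi_W$ and $T_f$ are morphisms of double bialgebras, a routine chase — lifting elements of $\calF_V[\bfP]$ along $\pi_V$ — shows that $\calF_f[\bfP]$ is a morphism of double bialgebras.

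The functoriality axioms are then formal: $\bfT_{\id_V}=\id$ gives $T_{\id_V}=\id$, whence $\calF_{\id_V}[\bfP]=\id$; and $\bfT_{g\circ f}=\bfT_g\circ\bfT_f$, together with the functoriality of $\calF$, gives $T_{g\circ f}=T_g\circ T_f$, which descends along the $\pi$'s (uniqueness of the descended map, $\pi_V$ being onto) to $\calF_{g\circ f}[\bfP]=\calF_g[\bfP]\circ\calF_f[\bfP]$. The only non-formal ingredients — both of which are routine verifications — are that $\bfT_f$ is compatible with $\delta$ when $f$ is a coalgebra morphism, and the commutativity of the square above, which is exactly where the algebra-morphism half of the hypothesis on $f$ is used; everything else is the already-established functoriality of $\calF$ and its compatibility with the (double) bialgebra structures, the fact that the Hadamard tensor product of morphisms of double twisted bialgebras is again such a morphism, and the universal property of the quotient $\pi$.
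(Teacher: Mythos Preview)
Your proposal is correct and carefully written. The paper itself gives no proof of this corollary, treating it as an immediate consequence of the preceding proposition; your argument supplies exactly the details one would expect the reader to fill in, namely the definition of $\calF_f[\bfP]$ on morphisms and the verification that it is a double bialgebra morphism via descent along the surjection $\pi$. The organising idea---lifting to $\calF_V[\bfP']$ where the double structure comes straight from the Fock functor, and using that $f$ is an algebra morphism for the commutativity of the square and a coalgebra morphism for the compatibility of $\bfT_f$ with $\delta$---is the natural one and matches the spirit of the construction in the paper.
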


\subsection{Double bialgebras over $V$ from coloured Fock functors}

We introduced in \cite{Foissy42} the notion of double bialgebra over $V$. Let us now prove that coloured Fock functors
give such objects. 

\begin{prop}
Let $(V,\cdot,\delta_V)$ be a (not necessarily unitary) commutative and cocommutative bialgebra and let 
$(\bfP,m,\Delta)$ be a twisted bialgebra, with a contraction-extraction coproduct as in Proposition \ref{propbialgebra}.
The double bialgebra $(\calF_V[\bfP],m,\Delta,\delta)$ is a double bialgebra over $V$ in the sense of \cite{Foissy42},
with the coaction defined by
\begin{align*}
&\forall v_1,\ldots,v_k\in V,\:\forall p\in \bfP[k],&\rho(\overline{v_1\ldots v_k\otimes p})
&=\overline{v_1'\ldots v_k'\otimes p}\otimes \prod_{i\in [k]}^\cdot v''_i,
\end{align*}
with Sweedler's notation $\delta_V(v)=v'\otimes v''$ for any $v\in V$ and where the symbol
$\displaystyle \prod^\cdot$ means that the corresponding product is taken in $V$.
\end{prop}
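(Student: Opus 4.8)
The plan is to realise the coaction $\rho$ as the pushforward of the second coproduct $\delta$ of $\calF_V[\bfP]$ along a suitable bialgebra surjection onto $V$, so that all the axioms of a double bialgebra over $V$ in the sense of \cite{Foissy42} become formal consequences of the fact that $(\calF_V[\bfP],m,\Delta,\delta)$ is already a double bialgebra (which we obtained in the previous proposition). Concretely, I would produce a morphism of bialgebras $q:\calF_V[\bfP]\longrightarrow V$ intertwining $(m,\delta)$ on the source with $(\cdot,\delta_V)$ on the target, set $\rho:=(\id\otimes q)\circ\delta$, and then check that this $\rho$ coincides with the stated explicit formula.

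\textbf{Construction of $q$.} By construction the counit $\epsilon_\delta:\bfP\longrightarrow\com$ of the contraction-extraction coproduct is a morphism of twisted bialgebras of the second kind: it is an algebra morphism because it is the counit of a twisted bialgebra of the second kind, and a morphism of coalgebraic species because it is a counit and $\com[X]=\K$ carries the trivial coproduct. Applying the functor $\calF_V$ (which sends morphisms of twisted bialgebras of the second kind to morphisms of ordinary bialgebras) gives a morphism of bialgebras $\calF_V[\epsilon_\delta]:\calF_V[\bfP]\longrightarrow\calF_V[\com]=S(V)$ for the structures $(m,\delta_{S(V)})$, where $\delta_{S(V)}$ is the multiplicative extension of $\delta_V$. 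Next, since $(V,\cdot)$ is commutative there is the canonical algebra morphism $\mu_V:S^+(V)\longrightarrow V$ extending $\id_V$, and, exactly as in the proof of Proposition \ref{propTV}, $\mu_V$ is also a coalgebra morphism from $(S^+(V),\delta_{S(V)})$ to $(V,\delta_V)$ because $\delta_V$ is multiplicative. Composing, $q:=\mu_V\circ\calF_V[\epsilon_\delta]$ is a morphism of bialgebras for $(m,\delta)$. (The only delicate point is the degree-zero component, $V$ not being unitary; this is handled precisely as in \cite{Foissy42}, $\mu_V$ being defined on $S^+(V)$.)

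\textbf{The axioms, formally.} Put $\rho:=(\id\otimes q)\circ\delta$. As $q$ is a $\delta$-coalgebra morphism and $\delta$ is coassociative and counitary, $\rho$ is automatically a coaction of the coalgebra $V$; as $q$ is an algebra morphism and $(\calF_V[\bfP],m,\delta)$ is a bialgebra, $m$ and the unit are $\rho$-comodule morphisms, so $\calF_V[\bfP]$ is a comodule-algebra over $V$. The compatibility with $\Delta$ is obtained by applying $\id\otimes\id\otimes q$ to the double-bialgebra identity $(\Delta\otimes\id)\circ\delta=m_{1,3,24}\circ(\delta\otimes\delta)\circ\Delta$: using that $q$ is a morphism of algebras and of $\delta$-coalgebras, the right-hand side becomes the codiagonal $V$-coaction on $\calF_V[\bfP]\otimes\calF_V[\bfP]$ applied to $\Delta$, while the left-hand side becomes $(\Delta\otimes\id)\circ\rho$; similarly $\varepsilon_\Delta$ is a comodule morphism. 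The compatibility between $\delta$ and $\rho$ demanded in \cite{Foissy42} (namely that $\delta$ is a comodule morphism with the coaction on the right factor) follows again from coassociativity of $\delta$ together with $q$ being a $\delta$-coalgebra morphism. At this point one also records that the cocommutativity of $\delta_V$ is what places $(V,\cdot,\delta_V)$, together with these data, inside the framework of \cite{Foissy42}.

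\textbf{Matching the formula.} It remains to evaluate $\rho$ on a representative $\overline{v_1\ldots v_k\otimes p}$. By the description of $\delta$ on $\calF_V[\bfP]$ (from the previous proposition), $\delta$ splits each colour through $\delta_V$ and is a sum over $\sim\in\eq[k]$ whose $\sim$-component has right tensor factor $\overline{v_1''\ldots v_k''\otimes p^{(2)_\sim}}$, with $\delta_\sim(p)=p^{(1)_\sim}\otimes p^{(2)_\sim}$, and left tensor factor obtained from $p^{(1)_\sim}$ by giving to each $\sim$-class the colour equal to the product in $(V,\cdot)$ of the $v_i'$ in that class. Applying $q$ to the right factor yields the scalar $\epsilon_\delta(p^{(2)_\sim})$ times the product $v_1''\cdots v_k''$ in $V$; hence the left factor is multiplied by $\epsilon_\delta(p^{(2)_\sim})$, and by the counit axiom $(\id\otimes\epsilon_\delta)\circ\delta_\sim$ of Definition \ref{deficontraction} only the term where $\sim$ is the equality of $[k]$ survives. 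For that $\sim$ the left factor is exactly $\overline{v_1'\ldots v_k'\otimes p}$ and the $V$-component is $\prod_{i\in[k]}^\cdot v_i''$, which is the announced coaction. The main obstacle is less any single computation than matching the construction cleanly to the precise list of axioms of \cite{Foissy42} and keeping the colour/Sweedler bookkeeping consistent across the two uses of $\delta_V$ (once inside $\delta$, once inside $\rho$).
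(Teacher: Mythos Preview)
Your approach is genuinely different from the paper's and is an attractive idea: the paper verifies each axiom by a direct Sweedler computation, whereas you factor $\rho$ as $(\id\otimes q)\circ\delta$ for a bialgebra morphism $q:\calF_V[\bfP]\to (V,\cdot,\delta_V)$ and try to read off the axioms formally. The construction of $q$ and the ``Matching the formula'' step are correct (once one observes that $\epsilon_\delta$ is an algebra morphism because it is the restriction to the subalgebra $\bfP\subset\bfP'$ of the counit of the twisted bialgebra of the second kind $\bfP'$; your phrasing conflates $\bfP$ and $\bfP'$ here). The comodule--algebra axioms and the compatibility with $\Delta$ do follow formally as you say.

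The gap is in the $\delta$--$\rho$ compatibility. The identity actually needed (and proved in the paper) is
\[
(\delta\otimes\id_V)\circ\rho=(\id\otimes c)\circ(\rho\otimes\id)\circ\delta,
\]
i.e.\ the coaction sits on the \emph{left} tensorand of $\delta$, followed by a flip. Writing $\delta^{(2)}(x)=x_1\otimes x_2\otimes x_3$, your formal manipulation with coassociativity and ``$q$ is a $\delta$-coalgebra morphism'' yields $x_1\otimes x_2\otimes q(x_3)$ on one side and $x_1\otimes x_3\otimes q(x_2)$ on the other; these are not equal in general. What you describe (``coaction on the right factor'') is the \emph{other} identity $(\id\otimes\rho)\circ\delta=(\delta\otimes\id_V)\circ\rho$, which is indeed automatic but is not the one required. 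The missing ingredient is precisely the cocommutativity of $V$: in the paper's computation it is used to swap $v''$ and $v'''$ and to reorder the $V$-products, which is exactly what turns $x_1\otimes x_2\otimes q(x_3)$ into $x_1\otimes x_3\otimes q(x_2)$. You mention cocommutativity only as a framing remark; it has to enter the argument at this precise point, and once you unpack what is needed you are essentially redoing the paper's direct check for that axiom.
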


\begin{proof}
As $V$ is commutative, $\rho$ is well-defined. The map $\rho$ is clearly a coaction of $V$ on $\calF_V[\bfP]$. 
Let us prove that the product is a $V$-comodule morphism. Let $v_1,\ldots,v_{k+l}\in V$, $p\in \bfP[k]$ and $q\in \bfP[l]$. 
\begin{align*}
&\rho\circ m(\overline{v_1\ldots v_k\otimes p}\otimes \overline{v_{k+1}\ldots v_{k+l}\otimes q})\\
&=\overline{v'_1\ldots v'_{k+l}\otimes \bfP[\sigma_{k,l}]\circ m_{[k],[l]}(p\otimes q)}\otimes 
v''_1\cdot \ldots \cdot v''_{k+l}\\
&= m(\overline{v'_1\ldots v'_k\otimes p}\otimes \overline{v'_{k+1}\ldots v'_{k+l}\otimes q})\otimes 
(v''_1\cdot \ldots \cdot v''_k)\cdot (v''_{k+1}\cdot \ldots \cdot  v''_{k+l})\\
&=m_{13,24}\circ (\rho\otimes \rho)(\overline{v_1\ldots v_k\otimes p}\otimes \overline{v_{k+1}\ldots v_{k+l}\otimes q}).
\end{align*}

Let us now prove that the coproduct $\Delta$ is a $V$-comodule morphism. Let $v_1,\ldots,v_k\in V$ and $p\in \bfP[k]$. 
\begin{align*}
(\Delta \otimes \id)\circ \rho(\overline{v_1\ldots v_k\otimes p})
&=\sum_{I\sqcup J=[k]}\left(\overline{\prod_{i\in I}^\cdot v'_i\otimes p^{(1)_I}}\right)\otimes 
\left(\overline{\prod_{i\in J}^\cdot v'_i\otimes p^{(1)_I}}\right)\otimes v''_1\cdot \ldots \cdot v''_n\\
&=\sum_{I\sqcup J=[k]}\left(\overline{\prod_{i\in I}^\cdot v'_i\otimes p^{(1)_I}}\right)\otimes 
\left(\overline{\prod_{i\in J}^\cdot v'_i\otimes p^{(1)_I}}\right)\otimes
\left(\prod_{i\in I}^\cdot v_i''\right)\cdot \left(\prod_{i\in J}^\cdot v_i''\right)\\
&=m_{1,3,24}\circ (\rho \otimes \rho)\circ \Delta(\overline{v_1\ldots v_k\otimes p}).
\end{align*}
We used the commutativity of $V$ for the second equality. 

Let us finally prove the compatibility between $\rho$ and $\delta$. Let $v_1,\ldots,v_k\in V$ and $p\in \bfP[k]$. 
\begin{align*}
(\delta\otimes \id)\circ \rho(\overline{v_1\ldots v_k\otimes p})&=
\sum_{\sim \in \eq[k]} \overline{\prod_{C\in [k]/\sim} \left(\prod_{i\in C}^\cdot v'_i\right)\otimes p^{(1)_\sim}}
\otimes \overline{v''_1\ldots v''_k \otimes p^{(2)_\sim}}\otimes v'''_1\cdot \ldots \cdot v'''_k\\
&=\sum_{\sim \in \eq[k]} \overline{\prod_{C\in [k]/\sim} \left(\prod_{i\in C}^\cdot v'_i\right)\otimes p^{(1)_\sim}}
\otimes \overline{v'''_1\ldots v'''_k \otimes p^{(2)_\sim}}\otimes 
\prod_{C\in [k]/\sim}^\cdot  \prod_{i\in C}^\cdot v''_i\\
&=(\id \otimes c)\circ (\rho \otimes \id)\circ \delta(\overline{v_1\ldots v_k\otimes p}).
\end{align*}
We used the commutativity and cocommutativity of $V$ for the second equality.
\end{proof}

\subsection{Double bialgebras of graphs}

Let $(V,\cdot,\delta_V)$ be a non necessarily unitary, commutative bialgebra. The double bialgebra $\calF_V[\bfG]$
is generated by graphs $G$ which any vertex $v$ is decorated by an element $d_G(v)$, with condititions of linearity in each vertex.
For example, if $v_1,v_2,v_3,v_4\in V$ and $\lambda_2,\lambda_4\in \K$,
if $w_1=v_1+\lambda_2v_2$ and $w_2=v_3+\lambda_4 v_4$,
\begin{align*}
\tddeux{$w_1$}{$w_2$}\:&=\tddeux{$v_1$}{$v_3$}\:+\lambda_4\tddeux{$v_1$}{$v_4$}\:
+\lambda_2\tddeux{$v_2$}{$v_3$}\:+\lambda_2\lambda_4\tddeux{$v_2$}{$v_4$}\:.
\end{align*}

The product is given by the disjoint union of graphs, the decorations being untouched. For any graph $G$,
\[\Delta(G)=\sum_{V(G)=A\sqcup B} G_{\mid A}\otimes G_{\mid B},\]
the decorations being untouched. Moreover,
\[\delta(G)=\sum_{\sim \in \eq_c[G]} G/\sim \otimes G\mid \sim.\]
Any vertex $w\in V(G/\sim)=V(G)/\sim$ is decorated by
\[\prod_{v\in w}^\cdot d_G(v)',\]
where the symbol $\displaystyle \prod^\cdot$ means that the product is taken in $V$ (recall that any vertex of $V(G/\sim)$
is a subset of $V(G)$). Any vertex $v\in V(G\mid \sim)=V(G)$ is decorated by $d_G(v)''$.
We use Sweedler's notation $\delta_V(v)=v'\otimes v''$, and it is implicit that in the expression of $\delta(G)$, 
everything is developed by multilinearity in the vertices. For example, if $v_1,v_2,v_3\in V$,
\begin{align*}
\Delta(\tddeux{$v_1$}{$v_2$}\:)&=\tddeux{$v_1$}{$v_2$}\:\otimes 1
+1\otimes \tddeux{$v_1$}{$v_2$}\:+\tdun{$v_1$}\:\otimes \tdun{$v_2$}\:+
\tdun{$v_2$}\:\otimes \tdun{$v_1$}\:,\\
\Delta(\tdtroisdeux{$v_1$}{$v_2$}{$v_3$}\:)&=\tdtroisdeux{$v_1$}{$v_2$}{$v_3$}\:\otimes 1
+1\otimes \tdtroisdeux{$v_1$}{$v_2$}{$v_3$}\:+\tddeux{$v_1$}{$v_2$}\:\otimes \tdun{$v_3$}\:
+\tddeux{$v_2$}{$v_3$}\:\otimes \tdun{$v_1$}\:+\tdun{$v_1$}\:\tdun{$v_3$}\:\otimes \tdun{$v_2$}\:\\
&+\tdun{$v_3$}\:\otimes  \tddeux{$v_1$}{$v_2$}\:+\tdun{$v_1$}\:\otimes  \tddeux{$v_2$}{$v_3$}\:
+\tdun{$v_2$}\:\otimes \tdun{$v_1$}\:\tdun{$v_3$}\:,\\
\Delta(\hspace{1mm}\gdtroisun{\hspace{-4mm}$v_1$}{$v_3$}{\hspace{-1mm}$v_2$}\:)&=
\hspace{1mm}\gdtroisun{\hspace{-4mm}$v_1$}{$v_3$}{\hspace{-1mm}$v_2$}\:\otimes 1
+1\otimes \hspace{1mm}\gdtroisun{\hspace{-4mm}$v_1$}{$v_3$}{\hspace{-1mm}$v_2$}\:
+\tddeux{$v_1$}{$v_2$}\:\otimes \tdun{$v_3$}\:
+\tddeux{$v_2$}{$v_3$}\:\otimes \tdun{$v_1$}\:+\tddeux{$v_1$}{$v_3$}\:\otimes \tdun{$v_2$}\:\\
&+\tdun{$v_3$}\:\otimes  \tddeux{$v_1$}{$v_2$}\:+\tdun{$v_1$}\:\otimes  \tddeux{$v_2$}{$v_3$}\:
+\tdun{$v_2$}\:\otimes \tddeux{$v_1$}{$v_3$}\:,\\
\\
\delta(\tddeux{$v_1$}{$v_2$}\:)&=\tddeux{$v_1'$}{$v_2'$}\:\otimes \tdun{$v_1''$}\hspace{2mm}\tdun{$v_2''$}\hspace{2mm}
+\tdun{$v_1'\cdot v_2'$}\hspace{6mm}\otimes \tddeux{$v_1''$}{$v_2''$}\hspace{2mm},\\
\delta(\tdtroisdeux{$v_1$}{$v_2$}{$v_3$}\:)&=
\tdtroisdeux{$v_1'$}{$v_2'$}{$v_3'$}\:\otimes \tdun{$v_1''$}\hspace{2mm}\tdun{$v_2''$}\hspace{2mm}
\tdun{$v_3''$}\hspace{2mm}+\tdun{$v_1'\cdot v_2'\cdot v_3'$}\hspace{10mm}\otimes 
\tdtroisdeux{$v_1''$}{$v_2''$}{$v_3''$}\:
+\tddeux{$v_1'\cdot v_2'$}{$v_3'$}\hspace{6mm} \otimes \tddeux{$v_1''$}{$v_2''$}\hspace{2mm}\tdun{$v_3''$}\hspace{2mm}
+\tddeux{$v_1'$}{$v_2'\cdot v_3'$}\hspace{6mm} \otimes \tddeux{$v_2''$}{$v_3''$}\hspace{2mm}\tdun{$v_1''$}\hspace{2mm}
,\\
\delta(\hspace{1mm}\gdtroisun{\hspace{-4mm}$v_1$}{$v_3$}{\hspace{-1mm}$v_2$}\:)&=
\hspace{1mm}\gdtroisun{\hspace{-4mm}$v_1'$}{$v_3'$}{\hspace{-1mm}$v_2'$}\:\otimes
\tdun{$v_1''$}\hspace{2mm}\tdun{$v_2''$}\hspace{2mm}
\tdun{$v_3''$}\hspace{2mm}+\tdun{$v_1'\cdot v_2'\cdot v_3'$}\hspace{10mm}\otimes \hspace{1mm}\gdtroisun{\hspace{-4mm}$v_1''$}{$v_3''$}{\hspace{-1mm}$v_2''$}\:\\
&+\tddeux{$v_1'\cdot v_2'$}{$v_3'$}\hspace{6mm} \otimes \tddeux{$v_1''$}{$v_2''$}\hspace{2mm}\tdun{$v_3''$}\hspace{2mm}
+\tddeux{$v_1'\cdot v_3'$}{$v_2'$}\hspace{6mm} \otimes \tddeux{$v_1''$}{$v_3''$}\hspace{2mm}\tdun{$v_2''$}\hspace{2mm}
+\tddeux{$v_2'\cdot v_3'$}{$v_1'$}\hspace{6mm} \otimes \tddeux{$v_2''$}{$v_3''$}\hspace{2mm}\tdun{$v_1''$}\hspace{2mm}.
\end{align*}
For any $V$-decorated graph,
\[\epsilon_\delta(G)=\begin{cases}
\displaystyle \prod_{v\in V(G)} \epsilon_V(d_G(v)) \mbox{ if }E(G)=\emptyset,\\
0\mbox{ otherwise}. 
\end{cases}\]

When one takes $V=\K$, with its usual bialgebraic structure, the multilinearity condition on the decorations implies that
a basis of $\calF_\K[\bfG]$ is given by graphs which all vertices are decorated by 1, which we identifty with
the set of graphs. We recover in this way the double bialgebra of graphs described in \cite{Foissy36}.
If $V$ is the bialgebra of the semigroup $(\N_{>0},+)$, we recover the double bialgebra of weighted graphs also described
in \cite{Foissy36}.

\bibliographystyle{amsplain}
\bibliography{biblio}

\end{document}